\documentclass[12pt]{amsart}
\usepackage{amssymb,latexsym,amsmath,amsthm,amsfonts, enumerate}
\usepackage{color}
\usepackage{graphicx}
\usepackage[all]{xy}
\usepackage{tabu}

%\usetikzlibrary{positioning,shapes,shadows,arrows,snakes}

%\usepackage[colorlinks=true,pagebackref,hyperindex]{hyperref}
%\usepackage{hyperref}

%\usepackage{showlabels}

\topmargin 0pt
\advance \topmargin by -\headheight
\advance \topmargin by -\headsep
\textheight 8.9in
\oddsidemargin 0pt
\evensidemargin \oddsidemargin
\marginparwidth 0.5in
\textwidth 6.5in

\numberwithin{equation}{section}

\newtheorem{theorem}{Theorem}[section]
\newtheorem{proposition}[theorem]{Proposition}

\newtheorem{corollary}[theorem]{Corollary}
\newtheorem{lemma}[theorem]{Lemma}

\theoremstyle{definition}
\newtheorem{remark}[theorem]{Remark}

%
%\newcommand{\rectangle}[3]
%{ \multiput(#1)(#2,0){2}
% {\line(0,1){#3}}
%\multiput(#1)(0,#3){2}
% {\line(1,0){#2}}
%}

\let\oldmarginpar\marginpar
\renewcommand\marginpar[1]{\-\oldmarginpar[\raggedleft\small\sf
#1]{\raggedright\small\sf #1}}

\newcommand{\Hom}{\mathrm{Hom}}
\newcommand{\Aut}{\mathrm{Aut}}
\newcommand{\Ext}{\mathrm{Ext}}

\newcommand{\bk}{\textup{Brac}_{k}(L)}

\newcommand{\dimv}{{\mathbf{dim}}\,}

\newcommand{\za}{\alpha}

\newcommand{\zd}{\delta}

\newcommand{\zg}{\gamma}

\newcommand{\zl}{\lambda}

\newcommand{\cald}{\mathcal{D}}

\begin{document}

\title[Frieze varieties]{Frieze varieties : A characterization of the finite-tame-wild trichotomy for acyclic quivers}

\author{Kyungyong Lee}
\address{Department of Mathematics, University of Nebraska--Lincoln, Lincoln, NE 68588, U.S.A.,
and Korea Institute for Advanced Study, Seoul 02455, Republic of Korea}
\email{klee24@unl.edu; klee1@kias.re.kr}
%\address{Department of Mathematics, 
%University of Nebraska-Lincoln,
%Lincoln NE 68588-0130,
%USA}
%\email{klee24@unl.edu}
\author{Li Li}
\address{Department of Mathematics
and Statistics,
Oakland University, 
Rochester, MI 48309-4479, USA 
}\email{li2345@oakland.edu}
\author{Matthew Mills}
\address{Department of Mathematics, 
University of Nebraska-Lincoln,
Lincoln NE 68588-0130,
USA}
\email{matthew.mills@huskers.unl.edu}

\author{Ralf Schiffler}\thanks{The first author was supported by the University of Nebraska--Lincoln, Korea Institute for Advanced Study, and NSA grant H98230-16-1-0059. 
The second author was supported by NSA grant H98230-16-1-0303. The third author was supported by the University of Nebraska.
The fourth author was supported by NSF-CAREER grant  DMS-1254567, and by the University of Connecticut. The fifth author was supported by NSF grant DMSÐ1601024.}
\address{Department of Mathematics, University of Connecticut, 
Storrs, CT 06269-3009, USA}
\email{schiffler@math.uconn.edu}

\author{Alexandra Seceleanu}
\address{Department of Mathematics, 
University of Nebraska-Lincoln,
Lincoln NE 68588-0130,
USA}
\email{aseceleanu@unl.edu}

\subjclass[2010]{Primary 16G60 %Representation type (finite, tame, wild, etc.
Secondary 13F60 %cluster algebras
16G20 %Representations of quivers and partially ordered sets
14M99 %special varieties
} 
\maketitle
\setcounter{tocdepth}{1}
\tableofcontents
\begin{abstract} We introduce a new class of algebraic varieties which we call frieze varieties. Each frieze variety is determined by an 
acyclic quiver.   The frieze variety is defined in an elementary recursive way by constructing a set of points in affine space. From a more conceptual viewpoint, the coordinates of these points are specializations of cluster variables in the cluster algebra associated to the quiver.  

We give a new characterization of the finite--tame--wild trichotomy for acyclic quivers  in terms of their frieze varieties.
 We show that an acyclic quiver is representation finite, tame, or wild, respectively, if and only if the dimension of its frieze variety is $0,1$, or $\ge2$, respectively. 
 \end{abstract}

\section{Introduction}

For every acyclic quiver $Q$, we define an algebraic variety $X(Q)$ which we call the \emph{frieze variety} of $Q$. The terminology stems from the fact that for quivers of Dynkin type $\mathbb{A}$ the coordinates of the points of the frieze variety are entries in  Conway-Coxeter friezes \cite{CC}. 
The frieze variety gives a geometric interpretation of the quiver as well as concrete  
numerical invariants, for example the dimension, the number of components and the degree. 

The construction of the variety $X(Q)$ is inspired from the theory of cluster algebras. It is defined as follows. 
Let $Q=(Q_0,Q_1)$ be an acyclic quiver  (i.e., a directed graph without oriented cycles) with $n$ vertices. Then we can label the vertices by integers $1,\dots,n$ such that  $i>j$ if there is an arrow $i\to j$.

For every vertex $i\in Q_0$ we define positive rational numbers $f_i(t)$ ($t\in\mathbb{Z}_{\ge0}$) recursively by  $f_i(0)=1$ and 
\begin{equation}\label{def:fi}
f_i(t+1)=\frac{1+\prod_{j\to i}f_j(t) \prod_{j\leftarrow i}f_j(t+1)}{f_i(t)}.
\end{equation}
We will see in Lemma \ref{lem 123}  below that these $f_i(t)$ are exactly the specializations at $x_1=\cdots=x_n=1$ of preprojective cluster variables in the cluster algebra of $Q$. In particular the $f_i(t)$ are  integers.

For every $t$, we thus obtain a point  $P_t=(f_1(t),\dots,f_n(t))\in\mathbb{C}^n$  in an affine space.
We define the \emph{frieze variety} $X(Q)$ of the quiver $Q$ to be the Zariski closure of the set of all  points $P_t$ ($t\in\mathbb{Z}_{\ge0}$). 
If we choose a different labelling, then the coordinates of each new $P_t$ are obtained from the old by permuting in the same way for every $t$. So the new $X(Q)$ is obtained from the old by permuting its coordinates, thus is isomorphic to the old one. In particular, $\dim X(Q)$ is independent of the labelling.

\smallskip

Thus every acyclic quiver $Q$ comes with an algebraic variety $X(Q)$. At this point many natural questions arise. Does the geometry of the variety reflect the representation theory of the quiver? Which quivers have smooth frieze varieties? Are dimension, degree and number of components meaningful invariants of the quiver? Moreover, 
although we focus in this paper on acyclic quivers, one can easily generalize the definition of a frieze variety to quivers that are not acyclic themselves but that are mutation equivalent to an acyclic quiver. One may then ask how does the frieze variety behave under mutation? 
\smallskip

In this paper, we show that the dimension of the frieze variety detects the representation type of the quiver.
An acyclic quiver $Q$ is either {\em representation finite}, {\em tame} or {\em wild}, depending on the representation theory of its path algebra. The quiver is representation finite  if and only if its underlying graph is a Dynkin diagram of type $\mathbb{A,D} $ or $\mathbb{E}$ \cite{G}, and it is tame if and only if the underlying graph is an affine Dynkin diagram of type $\widetilde{\mathbb{A}},\widetilde{\mathbb{D}}$ or $\widetilde{\mathbb{E}}$ \cite{DF,N,DR}.
All other acyclic quivers are wild.

 We propose a new characterization of the finite--tame--wild trichotomy in terms of the frieze variety $X(Q)$ of the quiver $Q$.
\begin{theorem}\label{thm main}
Let $Q$ be an acyclic quiver.
\begin{itemize}
\item [{\rm (a)}] If $Q$ is representation finite then  the frieze variety $X(Q)$ is of dimension 0.
\item [{\rm (b)}] If $Q$ is tame then the frieze variety $X(Q)$ is of dimension 1.
\item [{\rm (c)}] If $Q$ is wild then the frieze variety $X(Q)$ is of dimension at least 2.
\end{itemize}
\end{theorem}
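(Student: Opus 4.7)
The common framework I would use is Lemma \ref{lem 123}, which identifies $f_i(t)$ with the specialization at $x_1=\cdots=x_n=1$ of the preprojective cluster variable attached to the indecomposable $\tau^{-t}P_i$. Via the Caldero--Chapoton formula this is a positive integer determined by the representation theory of the preprojective component of the Auslander--Reiten quiver, so I will analyze the trichotomy through the growth and spectral structure of this component. Part~(a) is then essentially immediate: for a Dynkin quiver the associated cluster algebra is of finite type by Fomin--Zelevinsky, so only finitely many cluster variables exist, the sequence $(f_i(t))_{t\ge0}$ takes finitely many values, and $\{P_t\}$ is a finite set with $\dim X(Q)=0$.

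For part~(b), my plan is to bound $\dim X(Q)$ between $1$ and $1$. The lower bound would follow from showing $f_i(t)\to\infty$, which can be extracted from positivity of the Caldero--Chapoton formula together with the fact that $\underline{\dim}\,\tau^{-t}P_i$ is unbounded in affine type. For the matching upper bound I would exploit the spectral structure of the Coxeter transformation $c$: in affine type $c$ has a single Jordan block of size two at the eigenvalue $1$, spanned by the null root $\delta$, while all other eigenvalues lie on the unit circle. Consequently $\underline{\dim}\,\tau^{-t}P_i=t\delta+v_i(t)$ with $v_i(t)$ periodic in $t$, and transporting this periodic-plus-linear structure through the Caldero--Chapoton formula and the specialization $x=1$ should produce explicit polynomial identities among $f_1(t),\dots,f_n(t)$ valid for all $t$. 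These identities would cut out a one-dimensional subvariety of $\mathbb{C}^n$ containing every $P_t$, and hence containing $X(Q)$.

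For part~(c), when $Q$ is wild the Coxeter transformation has a Perron--Frobenius eigenvalue $\lambda>1$ and the dimension vectors $\underline{\dim}\,\tau^{-t}P_i$ grow exponentially, with subleading eigenvalues contributing rich second-order behavior that cannot be constrained by a single polynomial identity. I would aim to exhibit two coordinate functions $f_i$ and $f_j$ whose joint sequence $(f_i(t),f_j(t))$ is not contained in the zero locus of any nontrivial polynomial, forcing $\dim X(Q)\ge 2$; a practical approach is to verify this directly for the minimal wild acyclic quivers (for instance the $3$-Kronecker and the smallest wild trees), and then propagate to the general case by a monotonicity argument --- for example, by showing that adding arrows or vertices cannot strictly decrease $\dim X(Q)$. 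The main obstacle throughout is the upper bound in~(b): discreteness of the parameter $t$ alone does not constrain the Zariski closure, so the affine-type argument must produce enough explicit polynomial relations to cut $X(Q)$ down to a curve, and this is where the bulk of the representation-theoretic analysis will be needed.
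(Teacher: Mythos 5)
Your part (a) matches the paper. For parts (b) and (c), however, the proposal has genuine gaps rather than being a complete alternative route.

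For (b), the step from the affine Coxeter spectral structure to polynomial identities among the $f_i(t)$ does not go through as described. Knowing $\dimv\,\tau^{-t}P(i)=t\delta+(\text{periodic})$ controls only the \emph{denominator} of the Caldero--Chapoton Laurent polynomial (Lemma \ref{lem 123}(3)), not its value at $x_1=\cdots=x_n=1$; indeed the $f_i(t)$ grow exponentially in $t$ in affine type even though the dimension vectors grow linearly, so no ``linear-plus-periodic'' structure survives the specialization. The missing content is precisely the hard part of the paper: explicit linear recurrences for the integer sequences $f_i(t)$ themselves, proved via skein relations with bracelets on the annulus (type $\widetilde{\mathbb{A}}$, Theorem \ref{thm A}) and the twice-punctured disk (type $\widetilde{\mathbb{D}}$, Theorem \ref{thm D}), and verified by computer in type $\widetilde{\mathbb{E}}$ using the identities of Keller--Scherotzke. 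One then needs the observation that all characteristic roots of these recurrences are integer powers of a single $\rho$ (a quadratic irrational built from a bracelet constant, not the Coxeter eigenvalue $1$), so that Lemmas \ref{cor lr} and \ref{lem dim1} can eliminate $\rho^n$ and cut each projection down to a plane curve. Without some substitute for this machinery, the claimed ``explicit polynomial identities'' are not produced.

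For (c), both halves of your strategy are unsupported. First, ``verifying directly'' that $(f_i(t),f_j(t))$ satisfies no polynomial relation is not a finite computation; it is exactly the statement to be proved, and for a single quiver it already requires an asymptotic argument. Second, the proposed monotonicity (``adding arrows or vertices cannot decrease $\dim X(Q)$'') has no evident proof: the recursion \eqref{def:fi} at a vertex involves all its neighbors, so deleting a vertex or arrow changes every coordinate sequence globally, and there is no natural morphism relating $X(Q)$ to $X(Q')$ for a subquiver $Q'$. The paper avoids both issues by arguing for an arbitrary wild $Q$ at once: it proves (Proposition \ref{limit L(t)}) that $\rho^{-t}\ln f_i(t)\to\eta v_{ni}$ where $\mathbf{v}_n$ is the positive $\rho$-eigenvector of $\Phi$ and $\rho>1$ is the spectral radius, and then shows that any polynomial relation between two coordinates would force $v_{ni}/v_{nj}\in\mathbb{Q}$ for all $i,j$, hence $\rho\in\mathbb{Q}$, contradicting the irrationality of $\rho$ (Lemma \ref{rho is irrational}). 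Your instinct to use the Perron--Frobenius eigenvalue is right, but the logarithmic-asymptotics-plus-irrationality mechanism is the essential idea you would still need to supply.
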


If $Q$ is representation finite then the cluster algebra has only finitely many cluster variables \cite{FZ2} and hence $X(Q)$ is a finite set of points. This shows   part (a) of Theorem \ref{thm main}.

To prove part (b), we will specify linear recursions for the coordinates of the points $P_t$ and then use a general argument to show that the projection of $X(Q)$ to any coordinate plane is contained in the zero locus of a polynomial constructed from the linear recurrence. The key step here is to show that all the roots of the characteristic polynomials of all recursions are integral powers of a single complex number.
Linear recursions for the sequences $(f_i(t))_{t\ge 0}$ where  already considered in \cite{ARS, KS}, where it is shown that there exists a linear recursion for $(f_i(t))_{t\ge 0}$ for all $i$ if and only if $Q$ is representation finite or tame. In  \cite{KS}, explicit linear recursions were given in type  $\widetilde{\mathbb{D}}$ for leaf vertices, and we give new proofs for these recursions here. For type $\widetilde{\mathbb{A}}$ as well as the  non-leaf vertices in type  $\widetilde{\mathbb{D}}$, we provide new  explicit recursions.

To prove part (c) of the theorem, we use the fact that the points $P_t$ correspond to slices  $\tau^{-t+1} kQ$ in the preprojective component of the Auslander-Reiten quiver of the path algebra $kQ$ of $Q$, as well as several known facts on the spectral theory of the Coxeter matrix of a wild quiver, see \cite{delaPena}. 
%we first show that the point $P_t$ corresponds to the cluster determined by $\tau^{-t+1} kQ$ where $kQ$ denotes the path algebra of $Q$ as a module over itself, and $\tau^{-1}$ the inverse Auslander-Reiten translation. The coordinates $f_i(t)$ are the number of terms in the Laurent expansion of the cluster variable corresponding to the representation $\tau^{-t+1}P(i)$, where $P(i)$ denotes the indecomposable projective representation at vertex $i$.
%
The key result, which we think  interesting in its own right, is to show that, when $t$ goes to infinity, the natural logarithm  of the coordinates $\ln f_i(t)$ grows in the same way as $\rho^t$, where $\rho$ is the  largest eigenvalue, or spectral radius, of the Coxeter matrix. See Proposition \ref{limit L(t)}.

There are several characterizations of the finite-tame-wild trichotomy. In \cite{R}, Ringel showed that $Q$ is wild if and only if  the spectral radius of the Coxeter transformation is greater than 1. In \cite{SW}, Skowro\'nski and Weyman characterized tameness in terms of semi-invariants.
Recently, Lorscheid and Weist  characterized tameness using quiver Grassmannians \cite{LW}. 
To our knowledge, our characterization is the first one in terms of numerical invariants that are integers.
%They showed that $Q$ is representation finite if and only if for every indecomposable representations $M$ and all dimension vectors $e$ the quiver Grassmannians $\textup{Gr}_e(M)$  is smooth and have a cell decomposition into affine spaces;
% $Q$ is tame if and only if  all quiver Grassmannians of indecomposable representations of $Q$ have a cell decomposition into affine spaces, but there exist an indecomposable representation $M$ and a dimension vector $e$ such that $\textup{Gr}_e(M)$ is singular; 
%$ Q$ is wild if and only if every integer can be realized as the Euler characteristic of a quiver Grassmannian for $Q$.

The paper is organized as follows. In Section \ref{sect 2}, we recall several definitions and results from representation theory and cluster algebras that are needed later. We prove part (b) of Theorem \ref{thm main} in Section \ref{sect tame} and part (c) in Section \ref{sect wild}. We give several examples in section \ref{sect 4}.

%%%%%%%%%%%%%%%%%%%%%%%%%%%%%%%%%%%%%%%%%%%%%%%%%%%%%%%%%%%%
%
%
%   PRELIMINARIES
%
%
%%%%%%%%%%%%%%%%%%
\section{Preliminaries}\label{sect 2}
Throughout the paper we work over the field of complex numbers $\mathbb{C}$.
\subsection{Quivers and representations}\label{subsection: basic definition}
We start by recalling a few basic facts about quivers and their representations. For further details we refer to \cite{ASS, Schiffler}.

A \emph{quiver} $Q=(Q_0,Q_1,s,t)$ consists of a set $Q_0$ of vertices, a set $Q_1$ of arrows, and two maps $s,t: Q_1\to Q_0$ that send an arrow $\alpha$ to its starting point $s(\alpha)$ and terminal point $t(\alpha)$. We call $Q$ a \emph{finite quiver} if $Q_0$ and $Q_1$ are both finite sets. We will always assume $Q$ to be finite in our paper. 

 A \emph{representation} $M=(M_i,\varphi_\alpha)_{i\in Q_0, \alpha\in Q_1}$ of a quiver $Q$ is a collection of $\mathbb{C}$-vector spaces $M_i$ ($i\in Q_0$) together with a collection of $\mathbb{C}$-linear maps $\varphi_\alpha: M_{s(\alpha)}\to M_{t(\alpha)}$ ($\alpha \in Q_1$).   A representation $M$ is called \emph{finite-dimensional} if each $M_i$ is finite-dimensional. The representations considered in this paper are all finite-dimensional.   Let  $\dimv M=(\dim M_i)_{i\in Q_0}$ be the \emph{dimension vector} of $M$, and let $\textup{rep}_{\mathbb{C}}Q$ denote the category of finite-dimesional representations of $Q$. 
Let $\mathbb{C}Q$ be the path algebra of the quiver $Q$ over $\mathbb{C}$ and  $\textup{mod}\,\mathbb{C}Q$ the category of finitely generated $\mathbb{C}Q $-modules. There is an equivalence of categories $\textup{mod}\,\mathbb{C}Q\cong \textup{rep}_\mathbb{C}(Q)$, and we use the notions of representations and modules interchangeably.

%A morphism between two representations $M=(M_i,\varphi_\alpha), M'=(M'_i,\varphi_\alpha')$ of $Q$ is a collection of linear maps $(f_i: M_i\to M_i')_{i\in Q_0}$ such that $f_j\circ \varphi_\alpha=\varphi'_\alpha\circ f_i$ for any arrow $i\stackrel{\alpha}{\to} j$ in $Q_1$. 

The \emph{projective representation} $P(i)$ at vertex $i\in Q_0$ is defined as $(P(i)_j, \varphi_\alpha)$ where $P(i)_j$ is the vector space with basis the set of all paths from $i$ to $j$ in $Q$; for an arrow $j\stackrel{\alpha}{\to}\ell$ in $Q$, the map $\varphi_\alpha: P(i)_j\to P(i)_\ell$ is determined by composing the paths from $i$ to $j$ with the arrow  $j\stackrel{\alpha}{\to}\ell$. 

Let $D$ be the duality functor $\Hom_\mathbb{C}(-,\mathbb{C})$, and let $A=\oplus_{j\in Q_0} P(j)$. The \emph{Nakayama functor} is defined as $\nu=D\Hom_A(-,A)$. 
Let $\tau,\tau^{-1}$ denote the Auslander-Reiten translations. 
Recall the  definition of $\tau$. 
Let  $M$ be an indecomposable, non-projective representation of an acyclic quiver $Q$, and
\[\xymatrix{P_1\ar[r]^f&P_0\ar[r]&M\ar[r]&0}\] be a minimal projective presentation. Then $\tau M$ is defined by the following exact sequence 
\[\xymatrix{0\ar[r]& \tau M\ar[r]  &\nu P_1\ar[r]^{\nu f}& \nu P_0\ar[r]& \nu M\ar[r]& 0}.\]
 The inverse Auslander-Reiten translation $\tau^{-1}$ is defined dually for non-injective, indecomposable representations of $Q$.
 For every indecomposable non-injective representation $M$ there is a unique almost split sequence $0\to M\to E \to \tau^{-1}M\to 0$ starting at $M$.
 
An indecomposable representation $N$ is called \emph{preprojective} if there is a nonnegative integer $t$ such that $\tau^{t}N=P(i)$ for some $i\in Q_0$. The set of all indecomposable preprojective representations form the \emph{preprojective component} of the Auslander-Reiten quiver of $Q$.

\subsubsection{Admissible sequences} A sequence of vertices $(i_1,\dots,i_n)$ ($i_j\neq i_\ell$ if $j\neq \ell$) is called an \emph{admissible sequence} if the following conditions hold:

(1) $i_1$ is a sink of $Q$;

(2) $i_2$ is a sink of the quiver $s_{i_1}Q$ obtained from $Q$ by reversing all arrows that are incident to the vertex $i_1$;

(3) $i_t$ is a sink of $s_{i_{t-1}}\cdots s_{i_1}Q$ for $t=2,3,\dots,n$.
\smallskip

Note that the above definition is equivalent to saying that $j<\ell$ if there is an arrow $i_j\leftarrow i_\ell$ in $Q$. Indeed, assuming $(i_1,\dots,i_n)$ is admissible, if there is an arrow $\alpha: i_j\leftarrow i_\ell$ in $Q$, then  the sequence $s_{i_{\ell-1}}\cdots s_{i_1}$ changes the orientation of
$\alpha$ if and only if exactly one of $i_j$ and $i_\ell$ is in $\{i_1,\dots,i_{\ell-1}\}$, or equivalently, $j<\ell$ (because $i_\ell\notin \{i_1,\dots,i_{\ell-1}\}$). Conversely, assume $j<\ell$ if there is an arrow $i_j\leftarrow i_\ell$ in $Q$.
Then any arrow of the form $i_j\leftarrow i_t$ (thus $j<t$),  changes its orientation under the sequence $s_{i_{t-1}}\cdots s_{i_1}$, so we get a new arrow $i_j\to i_t$. On the other hand,  any arrow of the form $i_t\leftarrow i_j$ (thus $t<j$), remains unchanged under the sequence $s_{i_{t-1}}\cdots s_{i_1}$. Thus $i_t$ becomes a sink in the quiver $s_{i_{t-1}}\cdots s_{i_1}Q$. 

It is easy to see that if $(i_1,\dots,i_n)$ is an admissible sequence then  $s_{i_{n}}\cdots s_{i_1}Q=Q$. Indeed, since the admissible sequence contains each vertex exactly once, the reflection sequence reflects each arrow exactly twice.

Since we chose our vertex labels $1,\ldots,n$ such that $i>j$ if there is an arrow $i\to j$, we see that the sequence $1,\ldots,n$ is an admissible sequence.
\subsubsection{Structure of the preprojective component of the Auslander-Reiten quiver}\label{sect AR} The preprojective component of the Auslander-Reiten quiver has vertices $\tau^{-t} P(i)$ with $i \in Q_0$, $t\ge 0$ and arrows  $\alpha_t\colon\tau^{-t} P(i)\to  \tau^{-t} P(j)$ and 
 $\overline{\alpha}_t\colon\tau^{-t} P(j)\to  \tau^{-t-1} P(i)$, for all $\alpha\colon j\to i \in Q_1$, $t\ge 0$. For example,  if $Q$ is the quiver $\xymatrix{1&2\ar[l]_\alpha&3\ar[l]<2pt>^\beta\ar[l]<-2pt>_\gamma}$, then the beginning of the preprojective component is of the form
 \[\scriptsize\xymatrix@R15pt@C15pt{
 &&P(3) 
 \ar[rrd]<2pt>^{\overline{\beta}_0}
 \ar[rrd]<-2pt>_{\overline{\gamma}_0}  
 &&&\tau^{-1}P(3)
 \ar[rrd]<2pt>^{\overline{\beta}_1}
 \ar[rrd]<-2pt>_{\overline{\gamma}_1}  
 &&&\tau^{-2}P(3)\\
 &P(2)\ar[ru]<2pt>^{\beta_0}\ar[ru]<-2pt>_{\gamma_0} 
 \ar[rrd]^{\overline{\alpha}_0}
 &&&\tau^{-1}P(2)\ar[ru]<2pt>^{\beta_1}\ar[ru]<-2pt>_{\gamma_1}
  \ar[rrd]^{\overline{\alpha}_1}
 &&&\tau^{-2}P(2)\ar[ru]<2pt>^{\beta_2}\ar[ru]<-2pt>_{\gamma_2}
 \\
 P(1)\ar[ru]^{\alpha_0}
 &&&\tau^{-1} P(1)\ar[ru]^{\alpha_1}
 &&&\tau^{-2} P(1)\ar[ru]^{\alpha_2}
 &&\cdots
} \]
Each mesh of the preprojective component represents an almost split short exact sequence of the following form
\[\xymatrix{0\ar[r] &\tau^{-t+1}P(i) \ar[r] &\displaystyle \bigoplus_{j\to i} \tau^{-t+1} P(j) \oplus\bigoplus_{i\to j} \tau^{-t}P(j)\ar[r] &\tau^{-t}P(i)\ar[r]& 0}\]
\subsection{Cluster algebras} Let $Q$ be an acyclic quiver with $n$ vertices.
The cluster algebra $\mathcal{A}(Q)$ of the quiver $Q$ is the $\mathbb{Z}$-subalgebra of the field of rational functions $\mathbb{Q}(x_1,\ldots,x_n)$ generated by the set of all cluster variables obtained by mutation from the initial seed $((x_1,\ldots,x_n),Q)$.  
For every vertex $i$, the mutation $\mu_i $ in direction $i$ transforms a seed $((x_1,\ldots,x_n),Q)$ by replacing the $i$-th cluster variable $x_i$ by the new cluster variable $({\prod_{j\to i} x_j +\prod_{ i\to j} x_j})/{x_i}$,
where the first product runs over all arrows in $Q$ that end at $i$ and the second product over all arrows that start at $i$. Moreover, the mutation also changes the quiver. 
We refer to \cite{FZ4} for further details on cluster algebras.

In this paper, we are only concerned with mutations at sinks. Recall that a vertex $i$ is a sink if there is no arrow starting at $i$. Thus in this case, the mutation formula becomes  $(\prod_{j\to i} x_j +1)/{x_i}$. Moreover on the level of the quiver, the mutation $\mu_i Q$ of $Q$ at a sink $i$ is the same as the reflection $s_i Q$ of $Q$. 

Let $i_1,\ldots,i_n$ be an admissible sequence for $Q$, and denote the corresponding mutation sequence $\boldsymbol{\mu}=\mu_{i_n}\cdots\mu_{i_1}$.
Then each mutation in this sequence is a mutation at a sink, and moreover $\boldsymbol{\mu} Q =Q$.
Let $\mathbf{x}_0=(x_1(0),\ldots,x_n(0))$ denote the initial cluster and $\mathbf{x}_t=(x_1(t),\ldots,x_n(t))=\boldsymbol{\mu}^t (\mathbf{x}_0)$ be the cluster obtained from it by applying the sequence $\boldsymbol{\mu}$ exactly $t$ times, where $x_j(t)$ is the unique cluster variable that appears for the first time after the mutations 
$\mu_{i_j}\cdots\mu_{i_1} \boldsymbol{\mu}^{t-1}$.

A representation $M$ is called \emph{rigid} if $\Ext^1(M,M)=0$. For example, indecomposable preprojective representations are rigid, since $\Ext^1(\tau^{-t}P(i),\tau^{-t}P(i))\cong\Ext^1(P(i),P(i))=0$.
The {\em cluster character}, or \emph{Caldero-Chapoton map}, associates a  cluster variable $X_M$ to every indecomposable, rigid representation $M$ of $Q$ in such a way that the denominator of Laurent polynomial $X_M$ is equal    to $\prod_{i\in Q_0} x_i^{d_i}$, where $(d_1,\ldots,d_n)$ is the dimension vector of $M$. This was shown in  \cite{CK} for Dynkin quivers and in \cite{CK2} for arbitrary acyclic quivers. 
This result applies in particular to all indecomposable preprojective representations $\tau^{-t} P(i)$ with $i\in Q_0, t \ge 0$.

 It was also shown in \cite{BMRRT,CK2} that if $M$ is a rigid indecomposable representation with almost split sequence $0\to M\to E\to\tau^{-1} M\to 0$, then in the cluster algebra we have the exchange relation $X_{\tau^{-1} M} X_M=X_E +1$.
Using our description of the almost split sequences in the preprojective component
 in section \ref{sect AR}, we see  that  
 \[X_{\tau^{-t} P(i)} =
\Big(\prod_{j\to i} X_{\tau^{-t+1} P(j)} \prod_{i\to j} X_{\tau^{-t}P(j)}\ +\ 1\Big)\Big/X_{\tau^{-t+1} P(i),}\]
and with  our notation above this becomes
\[x_i(t+1)= \Big(\prod_{j\to i} x_j(t)  \prod_{i\to j} x_j(t+1)\ +\ 1 \Big)\Big/x_i(t).\]
If we now specialize the initial cluster variables at 1, we obtain precisely the recursive definition of the coordinates $f_i(t)$. We summarize the above results in the following Lemma.
\begin{lemma}\label{lem 123}
\begin{itemize}
 \item[{\rm (1)}] $f_i(t)$ is $x_i(t)$ specialized at $x_1=\cdots=x_n=1$. In particular, $f_i(t)$ is a positive integer.

\item[{\rm (2)}] $x_i(t) = X_{\tau^{-t+1}P(i)}$.

\item[{\rm (3)}] The denominator of $ X_{\tau^{-t+1}P(i)}$ is equal to $\prod_{i=1}^n x_i^{d_i}$, where $(d_1,\ldots,d_n)$ is the dimension vector of ${\tau^{-t+1}P(i)}$
%$\dimv \tau^{-1}M = \Phi^{-1} (\dimv M)$.

\end{itemize}
\end{lemma}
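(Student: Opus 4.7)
The plan is to prove the three parts in reverse order, since (3) is a direct citation, (2) is the main identification, and (1) is a specialization of (2).

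For part (3), I would observe that $\tau^{-t+1}P(i)$ is an indecomposable rigid module, as already recorded in the paragraph preceding the lemma. The Caldero-Chapoton denominator theorem cited as \cite{CK, CK2} then applies directly to give the stated denominator $\prod_{j=1}^n x_j^{d_j}$.

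For part (2), I would argue that the two sequences $\bigl(X_{\tau^{-t+1}P(i)}\bigr)_{t\ge 1}$ and $\bigl(x_i(t)\bigr)_{t\ge 1}$ satisfy the same three-term recursion, namely
\[
u_i(t+1)\,u_i(t)=\prod_{j\to i} u_j(t)\prod_{i\to j}u_j(t+1)+1.
\]
The first sequence satisfies it by the almost-split exchange relation from \cite{BMRRT, CK2} combined with the shape of meshes in the preprojective component (both displayed in the paragraph above the lemma), and the second satisfies it by the cluster mutations at sinks along the admissible sequence $1,2,\ldots,n$. Granted matching initial data, induction on $t$ then yields $x_i(t)=X_{\tau^{-t+1}P(i)}$. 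To establish the initial condition $x_i(1)=X_{P(i)}$ for every $i$, I would carry out a second induction along the admissible sequence: when we reach $\mu_{i_j}$, the vertex $i_j$ is a sink in $\mu_{i_{j-1}}\cdots\mu_{i_1}Q$, its incoming neighbors in this mutated quiver consist of the vertices $k<i_j$ with $i_j\to k$ in $Q$ (whose cluster values are already $X_{P(k)}$ by the inner induction) together with the vertices $k>i_j$ with $k\to i_j$ in $Q$ (whose cluster values are still $x_k$), and matching the sink-exchange relation against the Caldero-Chapoton expression for $X_{P(i_j)}$ obtained from the projective presentation $0\to\bigoplus_{i_j\to k}P(k)\to P(i_j)\to S(i_j)\to 0$ yields $x_{i_j}(1)=X_{P(i_j)}$.

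For part (1), the recursion \eqref{def:fi} for $f_i(t)$ is exactly the specialization of the mutation recursion for $x_i(t)$ at $x_1=\cdots=x_n=1$, and the initial values agree since $x_i(0)=x_i$ specializes to $1=f_i(0)$. Induction on $t$ gives $f_i(t)=x_i(t)|_{x_1=\cdots=x_n=1}$. Positivity of $f_i(t)$ is immediate from the recursion; integrality follows from the Laurent phenomenon, since $x_i(t)\in\mathbb{Z}[x_1^{\pm 1},\ldots,x_n^{\pm 1}]$ and its specialization at $x_j=1$ therefore lies in $\mathbb{Z}$.

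The main obstacle is the base case of (2), namely the identification $x_i(1)=X_{P(i)}$ for every $i$. While this is essentially the compatibility between BGP reflections at sinks and mutations in acyclic cluster algebras, a self-contained proof requires the inner induction along the admissible sequence sketched above. Every other step reduces to matching recursions and taking specializations.
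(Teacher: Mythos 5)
Your proposal is correct and follows essentially the same route as the paper: part (3) is the Caldero--Keller denominator theorem for rigid indecomposables, part (2) comes from matching the mesh/exchange recursion for $X_{\tau^{-t}P(i)}$ against the sink-mutation recursion for $x_i(t)$ along the admissible sequence, and part (1) is the specialization at $x_1=\cdots=x_n=1$ together with the Laurent phenomenon. The only difference is that you make explicit the base case $x_i(1)=X_{P(i)}$ (equivalently the convention $X_{\tau P(i)}=x_i$ for shifted projectives in the cluster category), which the paper leaves implicit; your inner induction is a valid way to supply it.
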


\subsection{Surface type} \label{sect surface}
A special class of quivers are those associated to triangulations of surfaces with marked points. The cluster algebras of these quivers are said to be of surface type. The cluster algebra (with trivial coefficients) does not depend on the choice of triangulation of the surface. It was shown in \cite{FST} that % the cluster variables of a cluster algebra of surface type are in bijection with the tagged arcs in the surface. Moreover, 
there are precisely four types of surfaces that give rise to acyclic quivers.
\begin{enumerate}
\item The disk with $n+3$ marked points on the boundary corresponds to the finite type $\mathbb{A}_n$. The quiver is acyclic if and only if the triangulation has no internal triangles. 
\item The disk with one puncture and $n$ marked points on the boundary corresponds to the finite type $\mathbb{D}_n$. The quiver is acyclic if and only if the triangulation has no internal triangles and exactly two arcs incident to the puncture.
\item The annulus with $p$ marked points on one and $q$ marked points on the other boundary component corresponds to the affine type $\widetilde{\mathbb{A}}_{p,q}$ with $n=p+q$ vertices. The quiver is acyclic if and only if every arc in the triangulation connects two points on different boundary components.
\item The disk with two punctures and $n-3$ marked points on the boundary corresponds to the affine type $\widetilde{\mathbb{D}}$ with $n$ vertices (in the usual notation this would be type $\widetilde{\mathbb{D}}_{n-1}$). The quiver is acyclic if and only if \begin{itemize}
\item [(i)] for each of the two punctures $p_i$ there are precisely two arcs $\tau_{i1}$ and $\tau_{i2}$ $(i=1,2)$ that connect $p_i$ to a boundary point $a_{i1}, a_{i2}$, such that, either $a_{i1}=a_{i2}$ or $a_{i1}$ and $a_{i2}$ are neighbors on the boundary. Therefore, either the arcs $\tau_{i1},\tau_{i2}$ form a selffolded triangle or they form a triangle together with the boundary segment $a_{i1}{\quad\over\quad} a_{i2}$.
\item [(ii)] letting $B_1$ and $B_2$ be the two parts of the boundary separated by the two triangles incident to the punctures, each of the remaining $n-4 $ arcs must connect a point of $B_1$ to a point of $B_2$.
\end{itemize}
\end{enumerate}

It was also shown in \cite{FST} that there is a bijection between cluster variables and tagged arcs in the surface. Later, in \cite{MSW}, combinatorial formulas were given for cluster variables, and in \cite{MSW2} these formulas were used to associate elements of the cluster algebra to other curves in the surface including closed simple loops and bracelets. If $L$ is a closed simple curve its $k$-bracelet $\bk$ is the $k$-fold concatenation of $L$ with itself. Thus the 1-bracelet is just the loop $L$ and the $k$-bracelet has $k-1$ selfcrossings. These bracelets are essential in the construction of the canonical basis known as the bracelet basis in \cite{MSW2}. Bracelets satisfy the following Chebyshev recursion
$\textup{Brac}_{0}(L)=2, \textup{Brac}_{1}(L)=L$ and \[ \bk = L\cdot \textup{Brac}_{k-1}(L) - \textup{Brac}_{k-2}(L). \]
All these elements satisfy the so-called skein relations, which are given on the level of curves by smoothing a crossing $\times$ in two ways   $\genfrac{}{}{0pt}{5pt}{\displaystyle\smile}{\displaystyle\frown}$ and $\supset\subset$. The skein relations in the cluster algebra were proved in \cite{MW} using hyperbolic geometry and in \cite{CS,CS2,CS3} using only the combinatorial definition of the cluster algebra elements. The skein relations between bracelets and arcs play  a crucial role in the proof of our main theorem in the affine types $\widetilde{\mathbb{A}}$ and $\widetilde{\mathbb{D}}$.

\subsection{Linear recurrences}\label{sect lr} We recall the following result about linear recurrences. %In this subsection, let $K$ be an arbitrary field.
\begin{lemma}
\label{lem lr} 
 Let $(a_n)$ be a sequence  given by the recurrence 
$a_n=c_1a_{n-1} +c_2a_{n-2}+\ldots+c_d a_{n-d}$, where the $c_i\in \mathbb{C}$ are constant. Let $p(x)$ be the characteristic polynomial of this recurrence, thus
$p(x)=x^d-c_1x^{d-1} -c_2x^{d-2} -\ldots-c_d,$  and denote by $r_1,r_2,\ldots, r_d$ the roots of $p(t)$.
 If the roots  of $p(t)$ are all distinct then there exist constants $\za_i\in \mathbb{C}$ such that
$a_n=\za_1r_1^n+\za_2r_2^n+\cdots \za_dr_d^n$.

In particular, if there exists a complex number $\rho$ such that the roots are $\rho^i$, for $d$ distinct integers $i$, then 
$a_n=\sum_i \za_i\rho^{ni}$.
\end{lemma}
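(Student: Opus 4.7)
The plan is to establish the standard fact that the space of solutions to the linear recurrence is a $d$-dimensional $\mathbb{C}$-vector space, and then exhibit an explicit basis coming from the roots of $p(x)$.

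First I would verify that for each root $r_j$ of $p(x)$, the geometric sequence $(r_j^n)_{n \ge 0}$ satisfies the recurrence. Indeed,
\[
r_j^n - c_1 r_j^{n-1} - c_2 r_j^{n-2} - \cdots - c_d r_j^{n-d} = r_j^{n-d}\bigl(r_j^d - c_1 r_j^{d-1} - \cdots - c_d\bigr) = r_j^{n-d}\, p(r_j) = 0.
\]
Next, I would observe that the set $V$ of all complex sequences $(a_n)_{n\ge 0}$ satisfying the recurrence is a $\mathbb{C}$-vector space, and that the evaluation map $V \to \mathbb{C}^d$ sending $(a_n) \mapsto (a_0, a_1, \ldots, a_{d-1})$ is a $\mathbb{C}$-linear isomorphism: it is injective because the recurrence determines $a_n$ for $n \ge d$ from $a_0, \ldots, a_{d-1}$, and it is surjective because one may prescribe any initial values and then extend via the recurrence. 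Hence $\dim V = d$.

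It remains to check that the $d$ sequences $(r_j^n)_{n\ge 0}$ for $j = 1, \ldots, d$ are linearly independent in $V$. Applying the evaluation map, linear independence reduces to showing that the vectors $(1, r_j, r_j^2, \ldots, r_j^{d-1})$ for $j = 1, \ldots, d$ are linearly independent in $\mathbb{C}^d$. This follows from the Vandermonde determinant formula
\[
\det\bigl(r_j^{i}\bigr)_{\substack{0 \le i \le d-1 \\ 1 \le j \le d}} = \prod_{1 \le j < k \le d} (r_k - r_j),
\]
which is nonzero precisely because the roots $r_1, \ldots, r_d$ are assumed distinct. Therefore the sequences $(r_j^n)$ form a basis of $V$, and any solution admits a unique expansion $a_n = \alpha_1 r_1^n + \cdots + \alpha_d r_d^n$ with $\alpha_j \in \mathbb{C}$ determined by solving the resulting Vandermonde system in the initial conditions.

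The particular case follows immediately: if $r_1, \ldots, r_d$ happen to be $\rho^{i_1}, \ldots, \rho^{i_d}$ for distinct integers $i_1, \ldots, i_d$, then $r_j^n = \rho^{n i_j}$ and the expansion takes the form $a_n = \sum_{j} \alpha_j \rho^{n i_j}$. There is no real obstacle in this proof; the only subtle point is invoking the distinctness of roots at precisely the Vandermonde step, which is why the hypothesis that $p(x)$ has simple roots is essential.
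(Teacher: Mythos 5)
Your proof is correct and complete: the three steps (each geometric sequence $(r_j^n)$ solves the recurrence, the solution space has dimension $d$ via the initial-conditions isomorphism, and the Vandermonde determinant gives linear independence from the distinctness of the roots) constitute the standard argument for this classical fact, which the paper simply recalls without proof. Nothing further is needed.
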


%%%%%%%%%%%%%%%%%%%%%%%%%%%%%%%%%%%%%%%%%%%%%%%%%%%%%%%%%%%%%%
%
% 
%       TAME
%
%
 %%
%
%
%       TAME 
%
%
 %%
%
% 
%       TAME
%
%
 %%

\section{Proof of the main theorem part (\textup{b}), the tame case}\label{sect tame}
In this section, we prove that $\dim X(Q)=1$ for affine types. In each case we shall  exhibit a linear recursion for the coordinates of the points of $X(Q)$, which then will imply the desired result via the results in section \ref{sect lr}. In the types $\widetilde{\mathbb{A}}$ and $\widetilde{\mathbb{D}}$, we  use skein relations to obtain the recursions, and in type $\widetilde{\mathbb{E}}$ we checked the recursion formulas by computer.

We start with two preparatory lemmas. 

\begin{lemma}
 \label{cor lr}  
Let $e$ be a positive integer and  $\rho\in \mathbb{C}$. Let $(a_n)_{n\in\mathbb{Z}}$ and $(b_n)_{n\in\mathbb{Z}}$ be two sequences such that
$$
a_n= \sum_{i=-e}^e \alpha_i\rho^{ni} \quad \text{ and } \quad b_n= \sum_{i=-e}^e \beta_i\rho^{ni}
$$ with $\alpha_i,\beta_i\in \mathbb{C}$. Then there exists a nonzero polynomial $g(x,y)\in \mathbb{C}[x,y]$ of degree at most $4e-2$ such that $g(a_n,b_n)=0$ for all $n$.

\end{lemma}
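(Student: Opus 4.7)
The plan is a linear-algebra pigeonhole argument. For nonnegative integers $j,k$, expanding the hypothesized expressions for $a_n$ and $b_n$ shows that, viewed as a function of $n$, the monomial $a_n^j b_n^k$ is a $\mathbb{C}$-linear combination of the characters $\chi_i \colon n \mapsto \rho^{ni}$ with $-(j+k)e \le i \le (j+k)e$. Thus, for any fixed degree bound $D$, every monomial in $a_n, b_n$ of total degree at most $D$ lies in the $\mathbb{C}$-vector space
$$V_D := \mathrm{span}_{\mathbb{C}}\{\chi_i : -De \le i \le De\} \subset \mathbb{C}^{\mathbb{Z}},$$
whose dimension is at most $2De + 1$.

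Next I would consider the evaluation map $\Phi$ sending a polynomial $g(x,y) \in \mathbb{C}[x,y]$ of degree $\le D$ to the sequence $(g(a_n, b_n))_{n \in \mathbb{Z}} \in V_D$. This is $\mathbb{C}$-linear, its domain has dimension $\binom{D+2}{2}$, and its image lies in a space of dimension at most $2De + 1$. Whenever $\binom{D+2}{2} > 2De + 1$, the kernel of $\Phi$ is nonzero, and any nonzero element of it provides a polynomial $g$ of degree $\le D$ with $g(a_n, b_n) = 0$ for all $n$.

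Taking $D = 4e - 2$, a direct computation gives $\binom{4e}{2} = 8e^2 - 2e$ on the left and $2(4e-2)e + 1 = 8e^2 - 4e + 1$ on the right; the difference is $2e - 1$, which is strictly positive for every $e \ge 1$. Hence the counting inequality is satisfied and the lemma follows. I do not foresee any substantive obstacle: the only subtlety is that if $\rho$ happens to be a root of unity then some of the $\chi_i$ coincide and $\dim V_D$ is strictly smaller than $2De + 1$, but this only strengthens the inequality. (We tacitly assume $\rho \neq 0$, as otherwise $\rho^{ni}$ is not defined for negative $i$ at positive $n$.)
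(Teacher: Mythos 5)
Your argument is correct and is essentially the paper's own proof: both reduce the problem to the count $\binom{4e}{2}=8e^2-2e$ coefficients of a degree-$(4e-2)$ polynomial versus the at most $2(4e-2)e+1=8e^2-4e+1$ ``characters'' $n\mapsto\rho^{n\ell}$ that can appear in $g(a_n,b_n)$, the paper phrasing this as an underdetermined homogeneous linear system and you as a nontrivial kernel of the evaluation map. Your side remarks (roots of unity only shrink the target space; $\rho\neq 0$ is implicit) are accurate but do not change the substance.
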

\begin{proof} 
 Let $d=4e-2$ and consider the general polynomial of degree $d$ 
 \[g(x,y)=\sum_{0\le j+k\le d} c_{j,k}x^jy^k.\]
 This polynomial has $(d+2)(d+1)/2=8e^2-2e$ coefficients $c_{j,k}$. We want to find coefficients $c_{j,k}$ such that $g(a_n,b_n)=0$ for all integers $n$, which is
 \begin{equation}\label{eq lr}\sum_{0\le j+k\le d} c_{j,k}( \sum_{i=-e}^e \alpha_i\rho^{ni} )
 ^j( \sum_{i=-e}^e \beta_i\rho^{ni})^k =0.
 \end{equation}
 If we write the left hand side as a polynomial in $\rho^{\pm n}$ it is of the form
 \[\sum_{\ell=-ed}^{ed} \gamma_\ell (\rho^n)^\ell,\]
 where $\gamma_\ell$ is  a linear function of the $c_{j,k}, 0\le j+k\le d$. Thus in order to show (\ref{eq lr}) it suffices to show that the system of equations
 \[\gamma_\ell((c_{j,k}))=0\]
 has a nontrivial solution, and this is true since the number of equations $2ed+1 = 8e^2-4e+1$ is strictly smaller than the number of variables $8e^2-2e$.
\end{proof}

\begin{lemma}\label{lem dim1}
Let $K$ be any field. 
Fix a nonnegative integer $e$ and any element $\rho\in K$. Let $n$ be a positive integer, and for each $p\in\{1,...,n\}$, let $\{a_j^{(p)}\}_{j\in\mathbb{Z}}$  be a sequence such that
$$
a_j^{(p)}= \sum_{i=-e}^e \alpha_i^{(p)}\rho^{ji},
$$ where $\alpha_i^{(p)}\in K$ is independent of $j$. Then the Zariski closure of $\{(a_j^{(1)}, a_j^{(2)},\cdots,a_j^{(n)})\in \mathbb{A}_K^n \ : \  j\in\mathbb{Z} \}$ is of Krull dimension $\leq 1$.
\end{lemma}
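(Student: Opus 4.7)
The plan is to parametrize all $n$ sequences simultaneously by a single variable. Set $g_p(s) := \sum_{i=-e}^{e} \alpha_i^{(p)} s^i \in K[s,s^{-1}]$ for $p = 1, \ldots, n$, and consider the morphism
\[
\phi\colon \mathbb{G}_{m,K} = \operatorname{Spec} K[s, s^{-1}] \longrightarrow \mathbb{A}_K^n, \qquad s \longmapsto (g_1(s), \ldots, g_n(s)).
\]
The implicit hypothesis $\rho \in K^*$ (needed so the negative powers of $\rho$ make sense when $e>0$) guarantees that each sample point satisfies $(a_j^{(1)}, \ldots, a_j^{(n)}) = \phi(\rho^j)$, so the Zariski closure in question is exactly $\overline{\phi(S)}$, where $S := \{\rho^j : j \in \mathbb{Z}\} \subseteq K^*$.

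I would then split into two cases according to the multiplicative order of $\rho$. If $\rho$ has finite order, then $S$ is a finite set, so $\phi(S)$ is finite and its Zariski closure has Krull dimension $0$. Otherwise $S$ is an infinite subset of $K^* = \mathbb{G}_m(K)$, and the key claim is that $S$ is Zariski-dense in $\mathbb{G}_m$: every proper closed subset of $\mathbb{G}_m$ is cut out by a nonzero element of $K[s,s^{-1}]$ and therefore has only finitely many $K$-rational points, so it cannot contain the infinite set $S$. Density then forces $\overline{\phi(S)} = \overline{\phi(\mathbb{G}_m)}$: writing $C := \overline{\phi(S)}$, the preimage $\phi^{-1}(C)$ is closed in $\mathbb{G}_m$ and contains $S$, hence equals all of $\mathbb{G}_m$, so $\phi(\mathbb{G}_m) \subseteq C$. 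Finally, $\overline{\phi(\mathbb{G}_m)}$ is the Zariski closure of the image of an irreducible one-dimensional variety under a morphism of finite type, so its Krull dimension is at most $\dim \mathbb{G}_m = 1$.

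No essential difficulty is expected; the subtlest point is the Zariski-density of $S$ in $\mathbb{G}_m$ over an arbitrary field, which reduces to the observation that $\mathbb{G}_m$ is one-dimensional so every proper closed subscheme contains only finitely many $K$-rational points. I note that Lemma~\ref{cor lr} is not logically required for this approach; it would only let one bound each two-coordinate projection of $\overline{\phi(S)}$ by a plane curve, which does not by itself control the total Krull dimension (e.g.\ a hyperplane $\{x_1=0\} \subset \mathbb{A}^n$ has all pairwise projections lying in lines but has dimension $n-1$). The morphism-based argument above bypasses this issue cleanly.
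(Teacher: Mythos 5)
Your argument is correct, and it takes a genuinely different route from the paper. The paper proves the lemma by induction on $n$: the base case $n=2$ is Lemma~\ref{cor lr} (a linear-algebra count showing a nonzero polynomial of degree $\le 4e-2$ vanishes on all the pairs), and the inductive step is a geometric contradiction argument comparing the closures obtained by zeroing out one of the last three coordinates, ending with three non-collinear points on a plane being forced to be collinear. You instead observe that all $n$ sequences are simultaneously parametrized by the Laurent-polynomial map $\phi\colon \mathbb{G}_m\to\mathbb{A}^n_K$ evaluated at $s=\rho^j$, split on whether $\rho$ has finite multiplicative order, and in the infinite-order case use that an infinite subset of $\mathbb{G}_m(K)$ is Zariski-dense (any proper closed subset lies in $V(f)$ for a nonzero Laurent polynomial $f$, hence is finite), so the closure of the point set equals $\overline{\phi(\mathbb{G}_m)}$, which has dimension $\le 1$ as the closure of the image of a one-dimensional irreducible variety. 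Each step is standard and valid over an arbitrary field; your approach is shorter, handles all $n$ at once, and makes the curve $X(Q)\subseteq\overline{\phi(\mathbb{G}_m)}$ conceptually visible, whereas the paper's route yields an explicit degree bound on the plane curves containing each two-coordinate projection (which it reuses elsewhere, e.g.\ in Corollary~\ref{cor A2}).

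One side remark in your proposal is wrong, though it does not affect your proof. You claim that bounding every two-coordinate projection by a plane curve ``does not by itself control the total Krull dimension,'' citing the hyperplane $\{x_1=0\}\subset\mathbb{A}^n$; but the projection of that hyperplane onto the $(x_2,x_3)$-coordinates is all of $\mathbb{A}^2$, so it is not a counterexample. In fact the pairwise-projection criterion is valid: if $X$ is irreducible of dimension $d\ge 2$, a transcendence basis of its function field can be extracted from the coordinate functions, so some pair $x_i,x_j$ is algebraically independent on $X$ and the projection $\pi_{ij}(X)$ is dense in $\mathbb{A}^2$. Contrapositively, if every $\pi_{ij}(X)$ lies in a curve then $\dim X\le 1$; this is exactly the reduction the paper makes in Corollary~\ref{cor A2}. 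So Lemma~\ref{cor lr} together with this observation would also give a clean proof --- your parametrization argument simply bypasses the need for either.
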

\begin{proof}
We use induction on $n$. There is nothing to show for $n=1$. The base case of $n=2$ is proved in Lemma~\ref{cor lr}. Suppose that it holds for $n$, and we prove for $n+1$. Let 
$$\aligned 
C&:=\overline{\{(a_j^{(1)}, a_j^{(2)},\cdots,a_j^{(n-1)},a_j^{(n)},0)\in \mathbb{A}_K^{n+1} \ : \  j\in\mathbb{Z} \}},\\
D&:=\overline{\{(a_j^{(1)}, a_j^{(2)},\cdots,a_j^{(n-1)},0,a_j^{(n+1)})\in \mathbb{A}_K^{n+1} \ : \  j\in\mathbb{Z} \}},\\
E&:=\overline{\{(a_j^{(1)}, \cdots, a_j^{(n-2)},0,a_j^{(n)},a_j^{(n+1)})\in \mathbb{A}_K^{n+1} \ : \  j\in\mathbb{Z} \}},\\
Z&:=\overline{\{(a_j^{(1)}, a_j^{(2)},\cdots,a_j^{(n-1)},a_j^{(n)},a_j^{(n+1)})\in \mathbb{A}_K^{n+1} \ : \  j\in\mathbb{Z} \}},
\endaligned$$ where the bar denotes the Zariski closure. By induction each of $\dim(C)$, $\dim(D)$, and $\dim(E)$ are $\leq 1$. If one of them is equal to 0, then $\dim(Z)\leq 1$ since $Z\subset C\times \mathbb{A}^1$, $Z\subset D\times \mathbb{A}^1$, and $Z\subset E\times \mathbb{A}^1$. Suppose that $\dim(C)=\dim(D)=\dim(E)=1$. Aiming at contradiction, assume that $\dim(Z)=2$. Let $Z_1$ be an irreducible component of $Z$ with $\dim(Z_1)=2$. Then $Z_1= C_1\times \mathbb{A}^1 = D_1\times \mathbb{A}^1=E_1\times \mathbb{A}^1$ for some irreducible component $C_1$ of $C$, some irreducible component $D_1$ of $D$, and some irreducible component $E_1$ of $E$. This implies that all of $C_1,D_1$, and $E_1$ are lines. Hence $Z_1$ is a linear plane in $\mathbb{A}_K^{n+1}$. Choose three non-collinear points in general position, say $(q_{w,1},...,q_{w,n+1})_{w\in\{1,2,3\}}$, on $Z_1$. Then the points $(q_{w,1},...,q_{w,n},0)_{w\in\{1,2,3\}}$ are distinct and collinear, because they are on the line $C_1$. Similarly $(q_{w,1},...,q_{w,n-1},0,q_{w,n+1})_{w\in\{1,2,3\}}$ are distinct and collinear, and $(q_{w,1},...,0,q_{w,n},q_{w,n+1})_{w\in\{1,2,3\}}$ are collinear as well. Then  $(q_{w,1},...,q_{w,n+1})_{w\in\{1,2,3\}}$ become collinear, which is a contradiction.   
\end{proof}

%%
%
% Section A
%
%%
\subsection{Affine type $A$}\label{sect A}
Let $Q$ be an acyclic quiver of type $\widetilde{\mathbb{A}}_{p,q}$. We will use the annulus with $p$ marked points on the inner boundary component and $q$ marked points on the outer boundary component as a model for $\textup{mod}\, \mathbb{C} Q$ as described in section \ref{sect surface}. Let
\[k=\frac{p+q}{\gcd(p,q)} \quad \textup{and}\quad m=\textup{lcm}(p,q).\]
Thus $k=(p+q)m/pq$. Let $L$ be the (isotopy class of the) closed simple curve formed by the equator of the annulus, and consider its $k$-bracelet $\bk$. The crossing number $e(\zg,L)$ between any two isoclasses of curves is defined to be the minimum number of crossings between a curve in the isotopy class of $\zg$ and the isotopy class of $L$. We define the constant
\[C(p,q) = X_{\bk}|_{x_i=1}\]
to be the positive integer obtained from the Laurent polynomial $X_{\bk}$ of the bracelet by specializing the initial cluster variables $x_1=\cdots=x_n=1$. Note that, unless one of $p$ or $q$ is 1, the value of  $C(p,q)$   depends on the orientation of the arrows of $Q$. For $q=1$ we have $C(p,1)=T_{p+1}(p+2)$, where $T_p$ is the $p$-th Chebyshev polynomial with $T_0=2$. So $C(1,1)=7, C(2,1)=52, C(3,1)=527, C(4,1)=6726.$ 
For $p=q=2$, there are two possible values, $C(2,2)=34 $ or 47.
%\smallskip
%\begin{center}
%  \begin{tabular}{ | c | c | c | c | c | c | c | c | c | c | c | c |}
%    \hline
%    p & 1 & 2& 3&4&p &2&3\\ \hline
%    q & 1 & 1&1&1&1&2&2 \\ \hline
%    k & 2 & 3&4&5&p+1&2&5 \\ \hline
%    C(p,q) & 7 & 52&527&6726&$T_{p-1}(p+2)$&34 or 47&30248 \\
%    \hline
%  \end{tabular}
%\end{center}
\medskip

We have the following linear recursion for the coordinates $f_i(t)$ of the points defining $X(Q)$.
\begin{theorem}
 \label{thm A}
 Let $Q$ be of type $\widetilde{\mathbb{A}}_{p,q}$ and $m=\textup{lcm}(p,q)$. Then for all $i\in Q_0$ and all $t\ge m$
 \[f_i(t+m)=C(p,q)f_i(t)-f_i(t-m).\]
\end{theorem}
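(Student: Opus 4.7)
My plan is to realize each cluster variable $x_i(t)$ as a tagged arc on the annulus with $p$ and $q$ marked points on the two boundary components, and then apply the bracelet skein relations with respect to the equator $L$. By Section~\ref{sect surface} and Lemma~\ref{lem 123}(2), $x_i(t) = X_{\tau^{-t+1}P(i)}$ corresponds to a tagged arc $\gamma_{i,t}$, and the admissible mutation sequence $\boldsymbol{\mu}$ acts on triangulations compatibly with the Auslander--Reiten translation $\tau^{-1}$ on the module side.

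The key geometric ingredient is that $\boldsymbol{\mu}^m$, for $m = \mathrm{lcm}(p,q)$, acts on arcs crossing $L$ as the $k$-fold Dehn twist $D_L^k$, where $k = (p+q)/\gcd(p,q)$. Heuristically, each iteration of $\boldsymbol{\mu}$ rotates the inner and outer boundaries by $1/p$ and $1/q$ of a full turn respectively (measured in the universal cover), so that $m$ iterations contribute a total winding change of $m/p + m/q = k$ to any arc crossing $L$. In particular, $\gamma_{i, t+m} = D_L^k\,\gamma_{i,t}$ and $\gamma_{i, t-m} = D_L^{-k}\,\gamma_{i,t}$.

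Given this identification, the recurrence follows from the skein identities of \cite{MW, CS, CS2, CS3}. For an arc $\gamma$ crossing $L$, the basic skein relation reads $X_L \cdot X_\gamma = X_{D_L\gamma} + X_{D_L^{-1}\gamma}$, where $D_L\gamma$ and $D_L^{-1}\gamma$ denote the two Dehn-twist smoothings. Combined with the Chebyshev recursion $\bk = L\cdot\textup{Brac}_{k-1}(L) - \textup{Brac}_{k-2}(L)$, an easy induction on $k$ yields
\[
X_{\bk} \cdot X_\gamma \;=\; X_{D_L^k\,\gamma} + X_{D_L^{-k}\,\gamma}.
\]
Taking $\gamma = \gamma_{i,t}$ and using the previous paragraph to rewrite $D_L^{\pm k}\gamma_{i,t} = \gamma_{i, t \pm m}$, then specializing $x_1 = \cdots = x_n = 1$ (invoking Lemma~\ref{lem 123}(1) together with $C(p,q) = X_{\bk}\big|_{x=1}$), produces $C(p,q)\, f_i(t) = f_i(t+m) + f_i(t-m)$, which rearranges to the claimed recurrence.

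The principal obstacle is the geometric identification in the second paragraph: establishing rigorously that $\boldsymbol{\mu}^m$ acts on preprojective arcs precisely as $D_L^k$, uniformly for all vertices $i$ (regardless of whether $i$ lies on the inner or outer boundary of the annulus). This requires a careful bookkeeping of how each elementary flip in an admissible mutation cycle contributes to the winding of an individual arc, and the argument must distinguish arcs based on where their endpoints sit relative to the rotation periods $p$ and $q$. Once this correspondence is settled, the skein computation is a routine Chebyshev induction, and no further subtleties arise in the specialization step.
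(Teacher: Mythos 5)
Your proposal is correct and follows essentially the same route as the paper: identify $\tau^{-m}$ (equivalently $\boldsymbol{\mu}^m$) on preprojective arcs with the $k$-fold Dehn twist along the equator $L$ via the winding count $m/p+m/q=k$, apply the skein relation $\bk\cdot\zg=\cald^{k}(\zg)+\cald^{-k}(\zg)$, and specialize at $x_1=\cdots=x_n=1$. The only cosmetic difference is that you derive the bracelet relation by Chebyshev induction from the $k=1$ case, whereas the paper asserts it directly in type $\widetilde{\mathbb{A}}$ (it carries out exactly your induction in the type $\widetilde{\mathbb{D}}$ argument).
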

\begin{proof}
 The indecomposable representations in the preprojective component correspond to arcs that connect points on different boundary components. For each such arc $\zg$, the crossing number $e(\zg,L)$ with $L$ is 1, and the crossing number $e(\zg,\bk)$ with the $k$-bracelet is  $k$. Smoothing one of these crossings we obtain the following skein relation
 
\begin{equation}
 \label{eq A1}
 \bk\cdot \zg =\cald^k(\zg)+\cald^{-k}(\zg),
\end{equation}
where $\cald$ denotes the Dehn twist along $L$. We give an example for $k=2 $ in  Figure \ref{fig Dehn}.
\begin{figure}
\begin{center}
 \scalebox{0.8}{\includegraphics{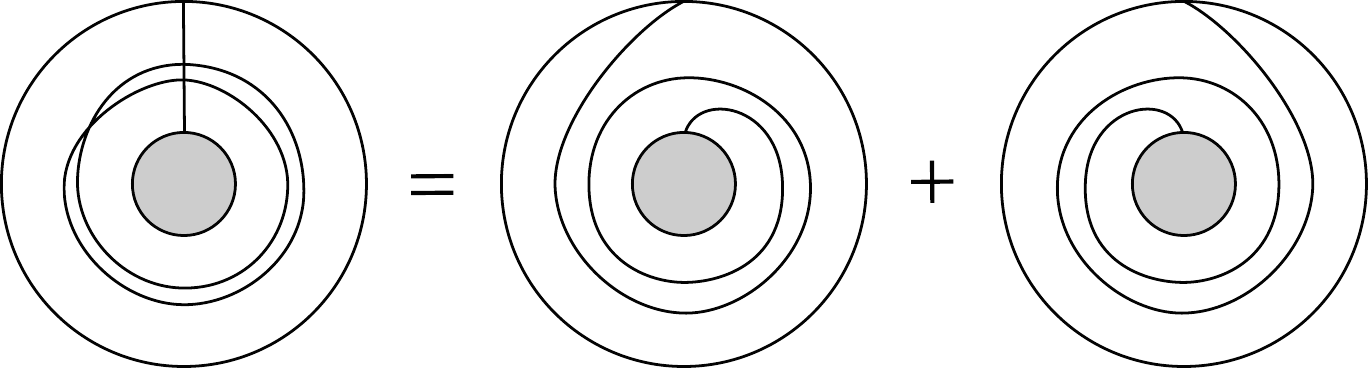}}
\end{center}
\caption{The skein relation of equation (\ref{eq A1}) for $k=2$.}
\label{fig Dehn}
\end{figure}
On the other hand, the inverse Auslander-Reiten translation $\tau^{-1}$ acts on the arc $\zg$ by moving its endpoint on the outer boundary component to its counterclockwise neighbor and its endpoint on the inner boundary component to its clockwise neighbor. Since $m=\textup{lcm}(p,q)$, we see that the arc $\tau^{-m}(g)$ has the same endpoints as $\zg$ but $\tau^{-m}(\zg)$ wraps around the inner boundary component exactly 
$\frac{m}{p}+\frac{m}{q}=\frac{m}{pq}(q+p)=k$ times more than $\zg$. In other words, $\tau^{-m}(\zg)=\cald^k(\zg)$. Therefore, equation (\ref{eq A1}) becomes
\[\bk\cdot \zg =\tau^m(\zg)+\tau^{-m}(\zg),\]
and  passing to the cluster algebra and specializing at $x_i=1$, we have
\[C(p,q) \,f_i(t) = f_i(t-m)+f_i(t+m).\qedhere\]
\end{proof}
\begin{remark}  
 Similar looking recurrence relations involving bracelets but  for arcs that have both endpoints on the same boundary component were found in \cite[Theorem 2.5]{BFPT} and \cite[Theorem 5.4]{GMV}.
\end{remark}
\begin{corollary}
 \label{cor A2}
 Let $Q$ be an acyclic quiver of affine type $\widetilde{\mathbb{A}}$. Then the dimension of the frieze variety $X(Q)$ is equal to one.
\end{corollary}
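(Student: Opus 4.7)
The plan is to combine the linear recursion of Theorem~\ref{thm A} with Lemma~\ref{lem dim1}. By Theorem~\ref{thm A}, every coordinate $f_i$ satisfies
\[f_i(t+m)=C\,f_i(t)-f_i(t-m)\qquad (t\ge m),\]
where $m=\textup{lcm}(p,q)$ and $C:=C(p,q)$. The substitution $y=x^m$ transforms the characteristic polynomial $x^{2m}-Cx^m+1$ into $y^2-Cy+1=(y-\lambda)(y-\lambda^{-1})$; one verifies $C>2$ from the Chebyshev recursion of bracelets together with the fact that the loop value $L$ specialises to an integer $\ge 3$, so $\lambda>1$ is real and the two roots are distinct. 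The $2m$ roots $\zeta\lambda^{\pm 1/m}$ ($\zeta^m=1$) of the full characteristic polynomial are not in general integer powers of a single complex number, and this is precisely why Lemma~\ref{lem dim1} cannot be invoked directly on the sequence $(P_t)_{t\ge 0}$; dealing with this obstacle is the main technical point of the argument.

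I would address it by passing to arithmetic subsequences. Fix $t_0\in\{0,1,\dots,m-1\}$ and set $a^{(i)}_s:=f_i(t_0+sm)$ for each $i\in Q_0$. This subsequence satisfies the two-term recursion $a^{(i)}_{s+1}=C\,a^{(i)}_s-a^{(i)}_{s-1}$ for $s\ge 1$, whose characteristic polynomial is $y^2-Cy+1$ with the two distinct roots $\lambda,\lambda^{-1}$. By Lemma~\ref{lem lr} there exist constants $\alpha_i^{(t_0)},\beta_i^{(t_0)}\in\mathbb{C}$ with
\[a^{(i)}_s=\alpha_i^{(t_0)}\lambda^s+\beta_i^{(t_0)}\lambda^{-s}\qquad(s\ge 0),\]
which is exactly the form required by Lemma~\ref{lem dim1} with $e=1$ and $\rho=\lambda$. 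Consequently the Zariski closure $Y_{t_0}:=\overline{\{P_{t_0+sm}:s\ge 0\}}$ has Krull dimension at most $1$. Because a finite union commutes with Zariski closure,
\[X(Q)=\bigcup_{t_0=0}^{m-1}Y_{t_0},\]
so $\dim X(Q)\le 1$.

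For the matching lower bound, I would show that $X(Q)$ is infinite. If $\alpha_i^{(t_0)}=0$ for some pair $(i,t_0)$, then $a^{(i)}_s=\beta_i^{(t_0)}\lambda^{-s}$ would be a positive integer for every $s\ge 0$ by Lemma~\ref{lem 123}(1); yet $\lambda>1$ forces $|\beta_i^{(t_0)}\lambda^{-s}|<1$ once $s$ is large, a contradiction. Hence every $\alpha_i^{(t_0)}$ is nonzero, so $f_i(t)\to\infty$ along each progression modulo $m$ and the set $\{P_t:t\ge 0\}$ is infinite. Combining with the upper bound yields $\dim X(Q)=1$.
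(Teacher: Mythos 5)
Your argument is correct and follows essentially the same route as the paper: the recursion of Theorem~\ref{thm A} is restricted to the residue classes modulo $m$, each arithmetic subsequence is put in the form $\alpha\lambda^{s}+\beta\lambda^{-s}$ via Lemma~\ref{lem lr} (using $C(p,q)>2$ so that $\lambda\neq\lambda^{-1}$), and Lemma~\ref{lem dim1} bounds the dimension of each piece by one, the only cosmetic difference being that you take the union of the $m$ closures $Y_{t_0}$ where the paper multiplies the $m$ polynomials $g_0\cdots g_{m-1}$ cutting out the projections. Your explicit lower-bound step (showing $\alpha_i^{(t_0)}\neq 0$, hence infinitely many points and $\dim X(Q)\geq 1$) is a small but welcome addition that the paper leaves implicit.
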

\begin{proof}
 It suffices to show that, for every pair $i,j\in Q_0$, the projection $\pi_{ij}(X(Q))$ of $X(Q)$ onto the $(i,j)$-plane is of dimension one. By Theorem \ref{thm A} and Lemma \ref{lem lr}, we have linear recurrences 
 \[f_h(t+m)=C(p,q)f_h(t)-f_h(t-m)=%\sum_{\ell=-1}^1\za_{h,\ell} \,C(p,q)^\ell,\]
% \[f_j(t+m)=C(p,q)f_j(t)+f_j(t-m)=\sum_{\ell=-1}^1\za_{j,\ell} \,C(p,q)^\ell,\]
\sum_{e=-1}^{1} \alpha_{h,r,e}\,\rho^{e\lfloor t/m\rfloor},\]
where $\rho$ is one of the roots of the polynomial  $x^2 - C(p,q)x+1$, and $\alpha_{h,r,e}$ depends only on $h\in\{i,j\}, r=t-m\lfloor t/m\rfloor\in\{0,1,...,m-1\} (\textup{mod}\ m)$, and $e$. Since $C(p,q)>2$, we have $\rho\ne 1$.

%with $\za_{h,-1}=0,\za_{h,0}=-f_h(t-m)$ and $\za_{h,1}=f_h(t)$, for $h=i,j$. 
Thus Lemma \ref{cor lr} with $e=1$ implies there is a polynomial $g_t(x,y)$ of degree $4e-2=2$ such that $g_t(f_i(t+sm),f_j(t+sm))=0$, for all $s$ such that $t+sm\ge 0$. Define $g=g_0\cdot g_1\cdots g_{m-1}$. Then $\pi_{ij}(X(Q))$ is contained in the zero locus of $g$, and hence has dimension one.
\end{proof}
%\begin{remark}
%Since the polynomial $g$ is of degree $2m$, this also shows that the minimal polynomial for $X(Q)$ is of degree at most $2m$.
%\end{remark}

%%
%
% Section D
%
%%
\subsection{Affine type $D$}\label{sect D} 
Let $Q$ be an acyclic quiver of type $\widetilde{\mathbb{D}}$ with $n$ vertices. Thus the underlying graph of $Q$ is the following. 
%\[\xymatrix@!@R=-14pt@C=10pt{1\ar@{-}[dr] & &&&& \mbox{$n$\,-$1$}\\
%&3\ar@{-}[r] & 4\ar@{-}[r] & \cdots \ar@{-}[r] & \mbox{$n$\,-$2$}\ar@{-}[ru] \ar@{-}[rd] \\  
%2\ar@{-}[ur]  &&&&& n\\ }\] 
\[\xymatrix@!@R=-4pt@C=10pt{\circ\ar@{-}[dr] & &&&& \circ\\
&\bullet\ar@{-}[r] & \bullet\ar@{-}[r] & \cdots \ar@{-}[r] & \bullet\ar@{-}[ru] \ar@{-}[rd] \\  
\circ\ar@{-}[ur]  &&&&& \circ\\ }\]
Since that we use $n$ for the number of vertices, so in the usual notation this type is $\widetilde{\mathbb{D}}_{n-1}$.
Each of the vertices marked with the symbol $\circ$ is called a {\em leaf} of $Q$ and each of the vertices marked with the symbol $\bullet$ is called a {\em non-leaf}.
We will use the disk with two punctures and $n-3$ marked points on the boundary as a model for $\textup{mod}\,\mathbb{C}Q$ as described in section \ref{sect surface}. Let $L$ be the (isotopy class of the) closed simple curve around the two punctures and let $\bk$ denote its $k$-bracelet.
We denote by $\cald$ the full Dehn  twist along $L$ in counterclockwise direction, and by $\cald^{1/2}$ the half Dehn twist.

The indecomposable representations 
 in the preprojective component correspond to two types of arcs, depending whether the vertex $i$ is a leaf of $Q$ or not.
 \begin{itemize}
\item[$\circ$] If $i$ is a leaf in $Q$,   then $\tau^{-t}P(i)$ corresponds to an arc $\zg$ that connects a boundary point to a puncture such that the crossing number $e(\zg,L)$ is one.
\item[$\bullet$] If $i$ is not a leaf in $Q$, then $\tau^{-t}P(i)$ corresponds to an arc $\zg$ with both endpoints on the boundary and such that the crossing number $e(\zg,L)$ is two.
\end{itemize}
%The preprojective representations of the form $\tau^{-t}P(i)$ are said to lie in the $\tau^{-1}$-orbit of $P(i)$. 
We call an arc $\zg$ {\em preprojective of orbit} $i$ if it corresponds to a preprojective representation of the form $\tau^{-t}P(i)$.

We have the following relation between the Dehn twist and the Auslander-Reiten translation.
\begin{lemma}
 \label{lem D1}
 Let $\zg$ be a preprojective arc of orbit $i$. Then
 \[\cald^2(\zg)=\tau^{-2(n-3)}(\zg).\] Moreover, if $i$ is a non-leaf vertex or $n$ is odd, then also $\cald(\zg)=\tau^{-(n-3)}(\zg)$.
\end{lemma}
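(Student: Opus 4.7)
The plan is to use the tagged-arc model of the twice-punctured disk with $n-3$ marked boundary points, as described in Section~\ref{sect surface}, and to translate both sides of the asserted identity into explicit geometric operations on a representative arc in each $\tau$-orbit. Since $\cald$ is (the isotopy class of) a mapping-class-group element that is the identity on a neighborhood of the boundary and fixes the two punctures pointwise, it commutes with $\tau^{-1}$ on isotopy classes of tagged arcs; hence it suffices to verify each equality on a single arc in each $\tau$-orbit (for instance the arc corresponding to $P(i)$) and then transport along the orbit.

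First I would recall the standard description, following \cite{FST}, of the action of $\tau^{-1}$ on tagged arcs. For a non-leaf arc $\zg$ with both endpoints on the boundary, $\tau^{-1}$ rotates each endpoint by one marked point in a fixed (say counterclockwise) direction. For a leaf arc $\zg$ with one endpoint at a puncture $p$ and one at a boundary marked point $b$, $\tau^{-1}$ rotates $b$ one step counterclockwise and simultaneously flips the tag of $\zg$ at $p$. Both rules can be verified on a chosen base case by comparing the geometric rotation of the arc with the AR sequence computed from the minimal projective presentation of $P(i)$.

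In the non-leaf case, iterating $\tau^{-1}$ exactly $n-3$ times sends each boundary endpoint once around the boundary and back to its starting position. Dragging both endpoints of an arc (which is caught twice on $L$) once around the boundary, with the two punctures held fixed, is isotopic in the twice-punctured disk to applying a single full Dehn twist along $L$. Hence $\tau^{-(n-3)}(\zg)=\cald(\zg)$, and squaring yields $\tau^{-2(n-3)}(\zg)=\cald^2(\zg)$, settling both claims in this case.

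In the leaf case, after $n-3$ iterations the boundary endpoint has returned to its starting position and the portion of the arc in the annular region between the boundary and $L$ has accumulated one full Dehn twist, but the tag at the puncture has been flipped $n-3$ times. When $n$ is odd, the tag is restored and we obtain $\tau^{-(n-3)}(\zg)=\cald(\zg)$; when $n$ is even, the tag has been flipped and so $\tau^{-(n-3)}(\zg)\neq\cald(\zg)$, but one further application of $\tau^{-(n-3)}$ restores the tag and doubles the twist, giving $\tau^{-2(n-3)}(\zg)=\cald^2(\zg)$ in all cases. The main obstacle I anticipate is the careful bookkeeping of tag parities together with the verification that one full revolution of the boundary endpoints produces exactly one Dehn twist $\cald$ (rather than its inverse or a half-twist swapping the punctures); I would pin this down by working out $n=5$ and $n=6$ explicitly before asserting the general pattern.
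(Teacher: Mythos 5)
Your proposal is correct and follows essentially the same route as the paper's proof: both describe the action of $\tau^{-1}$ on preprojective arcs (rotating boundary endpoints counterclockwise, and flipping the tag at the puncture in the leaf case), iterate $n-3$ times so that the endpoints return to their starting positions, identify the resulting winding with one full Dehn twist along $L$, and track the parity of tag flips to distinguish the $n$ odd and $n$ even leaf cases. Your extra remarks (that $\cald$ commutes with $\tau^{-1}$, and the suggestion to verify the twist direction on small cases) are harmless additions to the same argument.
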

\begin{proof}
 Recall that $n-3$ is the number of marked points on the boundary of the disk. If $i$ is a leaf then applying $\tau^{-1}$ moves the endpoint of $\zg $ that lies on the boundary to its counterclockwise neighbor and changes the tagging at the puncture. Thus $\tau^{-(n-3)}(\zg)$ is equal to the full Dehn twist $\cald(\zg)$ if $n-3$ is even, and it is the Dehn twist with opposite tagging at the puncture if $n-3$ is  odd.
 
 If $i$ is not a leaf, then applying $\tau^{-1}$ moves both endpoints of $\zg$ to their counterclockwise neighbors on the boundary. Thus after applying $\tau^{-1}$ exactly $n-3$ times, we have moved each endpoint of $\zg$ counterclockwise around the whole boundary back to its initial position. On the other hand, the Dehn twist does exactly the same, since $\zg$ crosses the loop $L$ twice in this situation.
\end{proof}

If $\zg $ is a preprojective arc of orbit $i$ with $i$ a non-leaf vertex, we let $\zg_1,\zg_2$ be the two arcs that have the same  endpoints as $\zg$ 
and do not cross the loop $L$, see Figure \ref{skein}. Define $S_k(L)$ recursively by $S_1(L)=1$, $S_2(L)=L+2$ and $S_k(L)=LS_{k-1}(L)-S_{k-2}(L)+2$.
\begin{lemma}
 \label{lem D2} Let $\zg$ be a preprojective arc of orbit $i$ and $k\ge 1$. 
 \begin{itemize}
\item [{\rm (a)}] If the vertex $i$ is a non-leaf then 
\[\bk\cdot \zg =\cald^{k/2}(\zg)+\cald^{-k/2}(\zg)+2(\zg_1+\zg_2)\,S_k(L).
\]
\item[{\rm (b)}] If the vertex $i$ is a leaf then
\[\bk\cdot\zg=\cald^k(\zg)+\cald^{-k}(\zg).\]
\end{itemize}
\end{lemma}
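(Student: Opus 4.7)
The plan is to prove both parts by induction on $k$, using the Chebyshev-type recursion $\textup{Brac}_k(L)=L\cdot \textup{Brac}_{k-1}(L)-\textup{Brac}_{k-2}(L)$ to reduce to the case $k=1$, which I would handle by direct application of the skein relations from \cite{MW,CS,CS2,CS3}.

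For part (b), the arc $\zg$ crosses $L$ transversally at a single point. Resolving this unique crossing via the skein relation, I would identify the two smoothings as $\cald(\zg)$ and $\cald^{-1}(\zg)$, giving the base case $L\cdot\zg=\cald(\zg)+\cald^{-1}(\zg)$. Since $\cald^j(\zg)$ again crosses $L$ exactly once (the Dehn twist preserves the geometric intersection number with $L$), the same relation applied to its unique crossing with $L$ yields $L\cdot\cald^j(\zg)=\cald^{j+1}(\zg)+\cald^{j-1}(\zg)$. Plugging the inductive hypothesis into the Chebyshev recursion,
\[
\bk\cdot\zg = L\cdot[\cald^{k-1}(\zg)+\cald^{-(k-1)}(\zg)] - [\cald^{k-2}(\zg)+\cald^{-(k-2)}(\zg)],
\]
and the intermediate $\cald^{\pm(k-2)}(\zg)$ terms telescope away, leaving $\cald^k(\zg)+\cald^{-k}(\zg)$.

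For part (a), the arc $\zg$ crosses $L$ at two points $p_1,p_2$. The base case requires a careful geometric analysis: iteratively smoothing at $p_1$ and at any residual self-crossing, I would verify that the resolution decomposes as $L\cdot\zg=\cald^{1/2}(\zg)+\cald^{-1/2}(\zg)+2(\zg_1+\zg_2)$, which matches the claim for $k=1$ since $S_1(L)=1$. For the inductive step, the same base case applied to $\cald^{j/2}(\zg)$, together with the observation that $\cald(\zg_i)=\zg_i$ (because $\zg_1,\zg_2$ do not cross $L$), yields
\[
L\cdot\cald^{j/2}(\zg)=\cald^{(j+1)/2}(\zg)+\cald^{(j-1)/2}(\zg)+2(\zg_1+\zg_2).
\]
Substituting the inductive hypotheses for $\textup{Brac}_{k-1}(L)\cdot\zg$ and $\textup{Brac}_{k-2}(L)\cdot\zg$ into the Chebyshev recursion, the $\cald^{\pm(k-2)/2}(\zg)$ terms cancel and the coefficient of $2(\zg_1+\zg_2)$ becomes $L\cdot S_{k-1}(L)-S_{k-2}(L)+2$, which is precisely $S_k(L)$ by its defining recursion.

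The main technical obstacle will be the geometric base case of (a): one must enumerate and topologically classify all resolutions arising from smoothing the two crossings of $L\cdot\zg$, confirming both the appearance of the half Dehn twists $\cald^{\pm 1/2}(\zg)$ and the precise multiplicity two on each of $\zg_1,\zg_2$. An alternative route that may be cleaner is to verify the $k=1$ identity directly by expanding both sides via the Musiker-Schiffler-Williams formula of \cite{MSW,MSW2}, reducing the claim to a combinatorial identity about perfect matchings of the relevant snake graphs. Once the base case is established, the remainder of the argument is purely algebraic manipulation of the Chebyshev and $S_k$ recursions.
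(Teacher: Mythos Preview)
Your proposal is correct and follows essentially the same argument as the paper: both prove the base case $k=1$ as a direct skein relation (the paper records this as equation~(\ref{eq skein}) with an accompanying figure), and both carry out the induction by multiplying the $(k-1)$-case by $L$, applying the $k=1$ relation to each $\cald^{\pm(k-1)/2}(\zg)$ (respectively $\cald^{\pm(k-1)}(\zg)$), and telescoping via the bracelet recursion together with the defining recursion for $S_k(L)$. Your remark that the non-leaf base case is the only genuinely geometric step, and that the rest is algebraic bookkeeping, is exactly how the paper treats it as well.
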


\begin{proof}
 For $k=1$ the results are the following skein relations illustrated in Figure \ref{skein}.
\begin{equation}
 \label{eq skein}
 L\cdot \zg =\left\{
\begin{array}
 {ll} 
 \cald^{1/2}(\zg)+\cald^{-1/2}(\zg)+2(\zg_1+\zg_2) &\textup{if $i$ is a non-leaf;}\\
\cald(\zg)+\cald(\zg)&\textup{if $i$ is a leaf.}
 \end{array}
 \right.
\end{equation}
Now suppose $k>1$. In case (a), we may assume by induction that 
\[\textup{Brac}_{k-1}(L) \cdot \zg= \cald^{(k-1)/2}(\zg)+\cald^{-(k-1)/2}(\zg)+2(\zg_1+\zg_2)\,S_{k-1}(L).\]
Multiplying by $L$ and using equation (\ref{eq skein}) we have
\[
\begin{array}
 {rcl}
 L\cdot\textup{Brac}_{k-1}(L) \cdot \zg&=&\cald^{k/2}(\zg)+\cald^{(k-2)/2}(\zg)+ 2(\zg_1+\zg_2)\\[5pt]
 &&+\cald^{-k/2}(\zg)+\cald^{-(k-2)/2}(\zg)+2(\zg_1+\zg_2)+2L(\zg_1+\zg_2)\,S_{k-1}(L)\\[5pt]
 &=& \cald^{k/2}(\zg)+\cald^{-k/2}(\zg)+\textup{Brac}_{k-2}(L)\cdot\zg -2(\zg_1+\zg_2)\,S_{k-2}(L)\\[5pt]
 && + 2(\zg_1+\zg_2)(2+L\cdot S_{k-1}(L)),
\end{array}\]
where the last equation holds by induction. Then
\[ (L\cdot\textup{Brac}_{k-1}(L) -\textup{Brac}_{k-2}(L))\cdot \zg=\cald^{k/2}(\zg)+\cald^{-k/2}(\zg)+2(\zg_1+\zg_2)(L\cdot S_{k-1}(L)-S_{k-2}(L)+2),\]
and using the recursions for the bracelets and for $S_k$ we get
\[\bk\cdot \zg = \cald^{k/2}(\zg)+\cald^{-k/2}(\zg)+2(\zg_1+\zg_2)\,S_k(L).\]

\begin{figure}
 
\begin{center}
\scalebox{0.8}{ 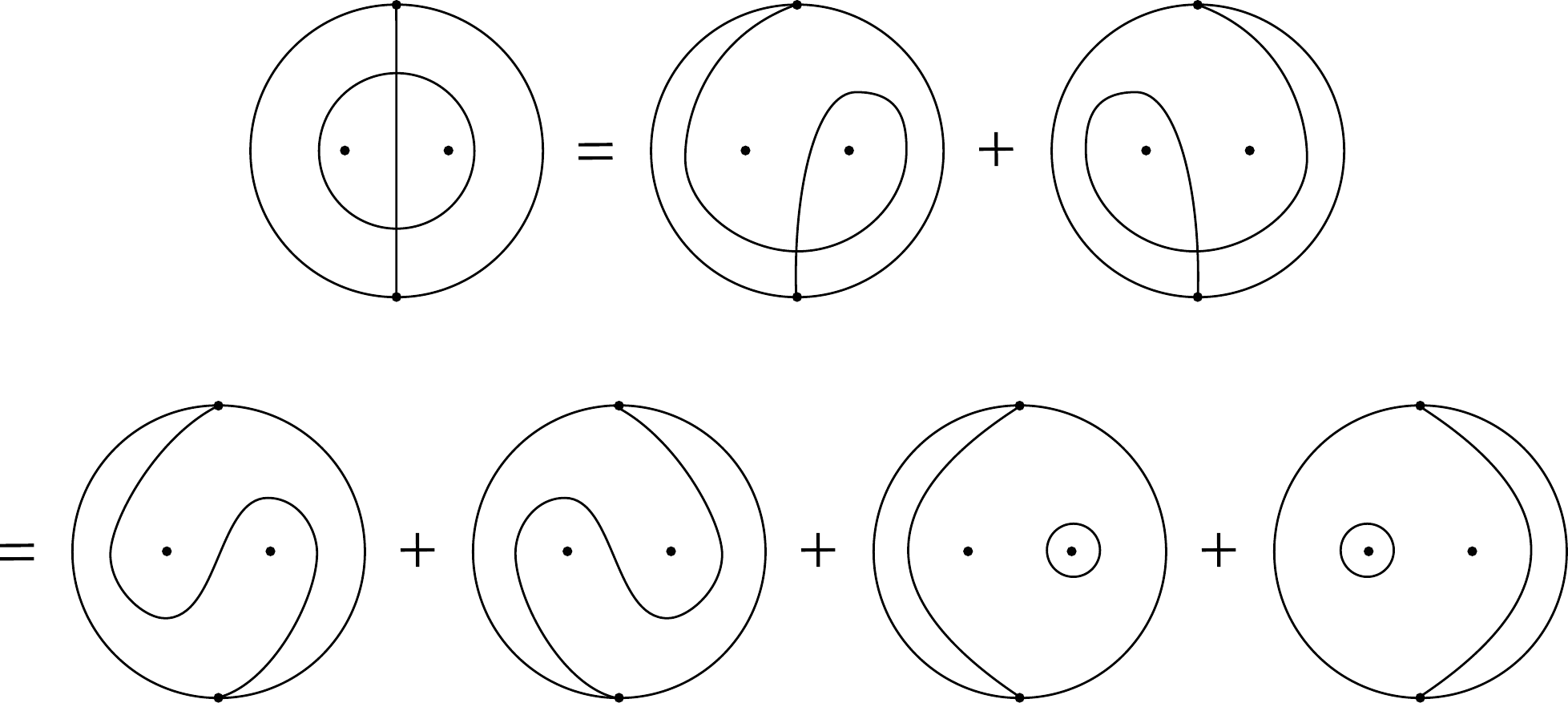}
\end{center}
\caption{The skein relation of equation (\ref{eq skein}). In the cluster algebra the loops that are contractible to the puncture are equal to 2.}\label{skein}
\end{figure}
In case (b), we assume by induction that
\[\textup{Brac}_{k-1}(L) \cdot \zg= \cald^{k-1}(\zg)+\cald^{-(k-1)}(\zg).\]
Again multiplying  by $L$ and using equation (\ref{eq skein}) we have
\[
\begin{array}
 {rcl}
 L\cdot\textup{Brac}_{k-1}(L) \cdot \zg&=&\cald^{k}(\zg)+\cald^{k-2}(\zg)
 +\cald^{-k}(\zg)+\cald^{-(k-2)}(\zg)\\
 &=& \cald^{k}(\zg)+\cald^{-k}(\zg)+\textup{Brac}_{k-2}(L) \cdot \zg
\end{array}\]
where the last equation holds by induction. Then
\[ (
 L\cdot\textup{Brac}_{k-1}(L)-\textup{Brac}_{k-2}(L)) \cdot \zg =  \cald^{k}(\zg)+\cald^{-k}(\zg)
\]
and the left hand side is equal to $\bk\cdot\zg.$
\end{proof} 

For our next result we define the  constants
\[
 C_j(n)=X_{\textup{Brac}_j(L)}|_{x_i=1}\qquad j \in \mathbb Z_{\geq 0}.
\]
Note that $C_2(n)=C_1(n)^2-2$ and the value of the constant depends on the orientation of the edges in $Q$.
We have the following linear recursion for the coordinates $f_i(t)$ of the points defining $X(Q)$.
\begin{theorem}
 \label{thm D}
 Let $Q$ be of type $\widetilde{\mathbb{D}}$ with $n$ vertices. 
 \begin{itemize}
\item [{\rm (a)}] For all non-leaf vertices $i$ and all $t,p\in \mathbb{Z}_{\ge 0}$, we have
 \[f_i(t+3p(n-3))=(C_{2p}(n)+1)\,f_i(t+2p(n-3))-(C_{2p}(n)+1) \,f_i(t+p(n-3))+f_i(t).\]
\item [{\rm (b)}] For all leaf vertices $i$ and all $t\in \mathbb{Z}_{\ge 0}$, we have
\[\begin{array}
{ll} f_i(t+2(n-3))= C_1(n)\, f_i(t+(n-3)) -f_i(t) &\textup{if $n$ is odd;} \\
 f_i(t+4(n-3))= C_2(n) \,f_i(t+2(n-3)) -f_i(t) &\textup{for all $n$.}
\end{array}\]
 \end{itemize}
\end{theorem}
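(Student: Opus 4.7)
The plan is to derive both recurrences by combining the skein relations of Lemma~\ref{lem D2} with the identifications of Dehn twists and powers of the inverse Auslander--Reiten translation from Lemma~\ref{lem D1}, then passing to the cluster algebra via the Caldero--Chapoton map and specializing the initial cluster variables to $1$. Throughout, let $\gamma(t)$ denote the preprojective arc of orbit $i$ associated to $\tau^{-t+1}P(i)$.

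\textbf{Part (b), the leaf case.} When $n$ is odd, Lemma~\ref{lem D1} gives $\cald(\gamma) = \tau^{-(n-3)}\gamma$. Combining this with Lemma~\ref{lem D2}(b) for $k=1$ yields
\[
L\cdot \gamma(t) \;=\; \gamma\bigl(t+(n-3)\bigr) + \gamma\bigl(t-(n-3)\bigr).
\]
Passing to cluster characters and specializing at $x_j=1$ produces $C_1(n)\,f_i(t) = f_i(t+(n-3)) + f_i(t-(n-3))$, which after re-indexing is the first recursion. For arbitrary $n$, using Lemma~\ref{lem D2}(b) with $k=2$ together with $\cald^2(\gamma) = \tau^{-2(n-3)}\gamma$ from Lemma~\ref{lem D1} and specializing in exactly the same way produces the second recursion.

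\textbf{Part (a), the non-leaf case.} Here Lemma~\ref{lem D1} gives $\cald(\gamma) = \tau^{-(n-3)}\gamma$ for every $n$, so $\cald^{p}(\gamma) = \tau^{-p(n-3)}\gamma$. Substituting $k=2p$ in Lemma~\ref{lem D2}(a) yields
\[
\textup{Brac}_{2p}(L)\cdot \gamma(t) \;=\; \tau^{-p(n-3)}\gamma(t) + \tau^{p(n-3)}\gamma(t) + 2\bigl(\gamma_1(t)+\gamma_2(t)\bigr)\,S_{2p}(L).
\]
Passing to cluster characters and specializing at $x_j=1$, we obtain the three-term relation
\[
C_{2p}(n)\,f_i(t) \;=\; f_i\bigl(t+p(n-3)\bigr) + f_i\bigl(t-p(n-3)\bigr) + \beta(t),
\]
where $\beta(t) := 2\bigl(X_{\gamma_1(t)}\big|_{x=1}+X_{\gamma_2(t)}\big|_{x=1}\bigr)\cdot S_{2p}(L)\big|_{x=1}$. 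The crucial observation is that the resolving arcs $\gamma_1(t),\gamma_2(t)$ depend only on the endpoints of $\gamma(t)$ on the boundary, and these endpoints are preserved by $\cald = \tau^{-(n-3)}$. Hence $\beta$ is constant along the arithmetic progression $t, t+p(n-3), t+2p(n-3),\ldots$. Writing the displayed identity at $t+p(n-3)$ and at $t+2p(n-3)$ and subtracting the two instances cancels the $\beta$-terms and yields
\[
f_i\bigl(t+3p(n-3)\bigr) \;=\; (C_{2p}(n)+1)\,f_i\bigl(t+2p(n-3)\bigr) - (C_{2p}(n)+1)\,f_i\bigl(t+p(n-3)\bigr) + f_i(t),
\]
which is the desired recursion.

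\textbf{Main obstacle.} The delicate point in (a) is the inhomogeneous correction $\beta(t)$ coming from the resolving arcs $\gamma_1,\gamma_2$ in the non-leaf skein relation, which prevents a direct two-step recurrence of the kind that works for leaves. The trick is to notice that $\beta(t)$ depends only on the endpoints of $\gamma(t)$ and is therefore invariant under shifts by $p(n-3)$; this periodicity lets us upgrade the three-term relation to a four-term homogeneous linear recurrence by a single telescoping subtraction, at the cost of increasing the order of the recurrence by one.
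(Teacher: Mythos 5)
Your proposal is correct and follows essentially the same route as the paper: apply the skein relation of Lemma~\ref{lem D2} with $k=2p$ (resp.\ $k=1,2$), identify Dehn twists with powers of $\tau^{-1}$ via Lemma~\ref{lem D1}, specialize, and in the non-leaf case eliminate the inhomogeneous term by subtracting two shifted instances of the three-term relation. Your explicit justification that the correction term is invariant under the shift (because $\gamma_1,\gamma_2$ depend only on the endpoints of $\gamma$, which the Dehn twist fixes) is a point the paper leaves implicit, so this is a faithful and slightly more careful rendering of the same argument.
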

\begin{proof} (a) Let $i$ be a non-leaf vertex.
Combining Lemmas \ref{lem D1} and \ref{lem D2}, we have for every preprojective arc $\zg$ of orbit $i$
\begin{equation}
 \label{eq D2}
 \textup{Brac}_{2p}(L)\cdot \zg = \tau^{-p(n-3)}(\zg)+\tau^{p(n-3)}(\zg)+2(\zg_1+\zg_2)\,S_{2p}(L).
\end{equation}
Let $K$ denote the integer obtained by specializing the Laurent polynomial corresponding to $2(\zg_1+\zg_2)S_{2p}(L)$ at $x_i=1$. Then, if we let $\zg=\tau^{-p(n-3)}\tau^{-t+1}P(i)$, the equation (\ref{eq D2}) yields
\begin{equation}
 \label{eq D3}
C_{2p}(n) \,f_i(t+p(n-3)) = f_i(t+2p(n-3) ) +f_i(t)+K.
\end{equation}
Similarly, if we let  $\zg=\tau^{-2p(n-3)}\tau^{-t+1}P(i)$, the equation (\ref{eq D2}) yields
\begin{equation}
 \label{eq D4}
C_{2p}(n) \,f_i(t+2p(n-3)) = f_i(t+3p(n-3)) +f_i(t+p(n-3))+K.
\end{equation}
Subtracting equation (\ref{eq D3}) from equation (\ref{eq D4}) and rearranging the terms we get
\[f_i(t+3p(n-3)) = (C_{2p}(n)+1) f_i(t+2p(n-3)) -(C_{2p}(n)+1) f_i(t+p(n-3)) +f_i(t).\]
This completes the proof of (a).

(b) Let $i$ be a leaf vertex.
Combining Lemmas \ref{lem D1} and \ref{lem D2}, we have for every preprojective arc $\zg$ of orbit $i$
\[\begin{array}{rcll}
 L\cdot \zg &= &\tau^{-(n-3)}(\zg)+\tau^{n-3}(\zg) & \textup{if $n$ is odd;}\\
 \textup{Brac}_2(L) \cdot \zg &= & \tau^{-2(n-3)}(\zg)+\tau^{2(n-3)}(\zg) & \textup{for all $n$.}
\end{array}\]
We thus obtain
\[\begin{array}{rcll}
 C_1(n)\cdot f_i(t+n-3) &= &f_i(t+2(n-3)) +f_i(t) & \textup{if $n$ is odd;}\\
 C_2(n)\cdot f_i(t+2(n-3)) &= &f_i(t+4(n-3)) +f_i(t) &  \textup{for all $n$.}
\end{array}\qedhere\]
\end{proof}
\begin{remark}
 Part (b) of Theorem \ref{thm D} was obtained in \cite[Theorems 6.1 and 6.2]{KS} using cluster categories.
\end{remark}
\begin{corollary}
 \label{cor D}
 Let $Q$ be an acyclic quiver of affine type $\widetilde{\mathbb{D}}$. Then the dimension of the frieze variety $X(Q)$ is equal to one.
\end{corollary}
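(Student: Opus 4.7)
The approach mirrors the proof of Corollary \ref{cor A2}, but applies Lemma \ref{lem dim1} directly to the sequence of points $P_t$ in $\mathbb{A}^n$. Set $v = n-3$ and choose the common period $M = 2v$. The plan is to show that for each residue $r \in \{0,1,\ldots,M-1\}$ and every vertex $h \in Q_0$, the subsequence of coordinates can be written uniformly as
\[
f_h(r + sM) = \sum_{e=-2}^{2} \alpha_{h,r,e}\,\rho^{se}, \qquad s \gg 0,
\]
where $\rho$ is the root greater than $1$ of $x^2 - C_2(n)\,x + 1 = 0$.

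For a leaf vertex $h$, Theorem \ref{thm D}(b) (in the ``for all $n$'' form) gives the step-$M$ recursion $f_h(t+2M) = C_2(n)\,f_h(t+M) - f_h(t)$; its characteristic polynomial is $x^2 - C_2(n)x + 1$ with roots $\rho,\rho^{-1}$, so Lemma \ref{lem lr} yields an expansion with nonzero coefficients only at $e = \pm 1$. For a non-leaf vertex $h$, Theorem \ref{thm D}(a) with $p=1$ gives the step-$v$ recursion whose characteristic polynomial factors as $(x-1)\bigl(x^2 - C_2(n)x + 1\bigr)$ with roots $1, \rho, \rho^{-1}$; writing $f_h(r+kv) = \gamma_0 + \gamma_1 \rho^k + \gamma_{-1}\rho^{-k}$ and substituting $k=2s$ gives an expansion with nonzero contributions only at $e \in \{-2,0,2\}$. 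In either case the exponents lie in $\{-2,-1,0,1,2\}$, so Lemma \ref{lem dim1} (with $e=2$) shows the Zariski closure of $\{P_{r+sM} : s \geq 0\}$ has Krull dimension at most $1$.

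Since $X(Q)$ is the finite union of these closures (one per residue class, together with finitely many exceptional points for small $t$), we obtain $\dim X(Q) \leq 1$. For the matching lower bound, the dominant root $\rho > 1$ forces at least one coordinate sequence $(f_h(t))$ to grow without bound (as one sees already for the Kronecker-like cases by computing initial values), so $\{P_t\}$ is infinite and $\dim X(Q) \geq 1$.

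The principal obstacle is reconciling the two different recursion orders, namely order $2$ for leaf vertices and order $3$ for non-leaf vertices, with a single common base $\rho$. The resolution is to use the coarser step $M = 2(n-3)$ and exploit the fact that both characteristic polynomials share the quadratic factor $x^2 - C_2(n)x + 1$; accepting the wider exponent range $\{-2,-1,0,1,2\}$ then fits both cases into the uniform hypothesis of Lemma \ref{lem dim1}. A secondary technicality is to verify $C_2(n) > 2$, which ensures the three roots $1, \rho, \rho^{-1}$ are pairwise distinct and $\rho$ is real with $\rho > 1$; this follows from $C_2(n) = C_1(n)^2 - 2$ together with $C_1(n) \geq 3$.
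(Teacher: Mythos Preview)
Your argument is correct and follows the same route as the paper: use the recursions of Theorem~\ref{thm D} to exhibit all characteristic roots as integer powers of a single $\rho$, then apply Lemma~\ref{lem lr} and Lemma~\ref{lem dim1}. The only difference is organizational: the paper splits on the parity of $n$ (for $n$ odd it takes $\rho$ a root of $x^2-C_1(n)x+1$ with step $n-3$, and for $n$ even it takes $\rho$ a root of $x^2-C_2(n)x+1$ with step $2(n-3)$ and $p=2$, so that in each case the cubic factors as $(x-\rho^2)(x-1)(x-\rho^{-2})$), whereas you handle both parities at once by always choosing $\rho$ a root of $x^2-C_2(n)x+1$, using the ``for all $n$'' leaf recursion at step $2(n-3)$, and lifting the non-leaf recursion from step $n-3$ to step $2(n-3)$ via $k=2s$; this buys you a cleaner case-free statement at the cost of the slightly wider exponent window $\{-2,-1,0,1,2\}$ even for leaves.
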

\begin{proof} Suppose first that $n$ is odd.
 The characteristic polynomial of the recursions in Theorem \ref{thm D} are
 \[\begin{array}{ll}
 x^3-(C_2(n)+1) x^2+(C_2(n)+1)x-1 &\textup{in case (a) with $p=1$;}\\
 x^2-C_1(n)\,x+1  &\textup{in case (b) with $n$ odd;}\\
% x^2-C_2(n)\,x+1  &\textup{in case (b) for all $n$.}\\
\end{array}
 \]
 
Let $\rho$ be a root of $ x^2-C_1(n)\,x+1 $. % Since the constant term of the polynomial is 1, the other root is $\rho^{-1}$. 
Then $ x^2-C_1(n)\,x+1 = (x-\rho)(x-\rho^{-1})$ and $C_1(n)=\rho+\rho^{-1}$. Moreover $\rho\ne\rho^{-1}$,  since $C_1(n)>2$. From the recursive formula of the bracelets we have
$C_2(n)=C_1(n)^2-2$, thus $C_2(n)+1=\rho^2+\rho^{-2}+1$. Therefore the characteristic polynomial in case (a) is equal to $
 (x-\rho^2)(x-1)(x-\rho^{-2})$.
 In particular the roots of the characteristic polynomials in case (a) and (b) are of the form $\rho^\ell$ with $\ell=-2,-1,0,1,2$. Now the result follows from Lemma~\ref{lem lr} and Lemma~\ref{lem dim1}.
 
 If $n$ is even, we use $p=2$ so that the characteristic polynomial of the recursions in Theorem~\ref{thm D} are
 \[\begin{array}{ll}
 x^3-(C_4(n)+1) x^2+(C_4(n)+1)x-1 &\textup{in case (a) with $p=2$;}\\
% x^2-C_1(n)\,x+1  &\textup{in case (b) with $n$ odd;}\\
 x^2-C_2(n)\,x+1  &\textup{in case (b) for all $n$.}\\
\end{array}
 \]
 Now we let $\rho $ be a root of  $x^2-C_2(n)\,x+1$, use the Chebyshev relation $C_4(n)=C_2(n)^2-2$ and the proof is analogous to the previous case.
 \end{proof}
%%
%
% Section E
%
%%
 \subsection{Affine type $E$}\label{sect E}
 Let $R$ be a commutative ring, and $R'$ be a subring of $R$. 
 If a sequence $(a_\ell)$ of elements in $R$ satisfies a recurrence relation $c_0a_{\ell+k}+c_1a_{\ell+k-1}+\cdots+c_ka_\ell=0$ for all $\ell\ge 0$  (where $c_0,\dots,c_k\in R'$), then we call the polynomial $f(x)=c_0x^k+c_1x^{k-1}+\cdots+c_k\in R'[x]$ an annihilator of $(a_\ell)$.  Recall the following fact: 
 \begin{lemma}\label{lemma:annihilators}
Let $f(x), g(x)\in R'[x]$ be annihilators of $(a_\ell)$ and $(b_\ell)$, respectively. Then:
 
 (i) $f(x)g(x)\in R'[x]$  is an annihilator of $(a_\ell+b_\ell)$ of degree $\deg f+\deg g$. 
 
 (ii) Let $h(x)\in R'[x]$ be the characteristic polynomial of the tensor product of the companion matrices of $f$ and $g$. Then $h(x)$ is an annihilator of $(a_\ell b_\ell)$ of degree $(\deg f)(\deg g)$. 
 
 Moreover, if the leading and constant coefficients of $f, g$ are $\pm 1$, then the same holds for the above annihilators.
  \end{lemma}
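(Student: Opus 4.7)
The plan for part (i) is to work with the shift operator $S$ acting on the $R$-module of sequences by $(Sa)_\ell = a_{\ell+1}$. Saying that $f$ annihilates $(a_\ell)$ is the statement that $f(S)\,a = 0$ as a sequence, and similarly $g(S)\,b = 0$. Since polynomials in a single operator commute, $f(S)$ and $g(S)$ commute, so
\[
(fg)(S)(a+b) \;=\; g(S)\,f(S)\,a \;+\; f(S)\,g(S)\,b \;=\; 0.
\]
The degree statement and the leading / constant coefficient statements are then immediate from $(fg)(x)=f(x)g(x)$ together with the fact that if the extreme coefficients of $f,g$ are $\pm 1$ then so are those of their product.

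For part (ii), the plan is to reduce to linear algebra via companion matrices. Assume (as in the intended application) that the leading coefficients of $f,g$ are units, so the companion matrices $C_f\in M_k(R')$ and $C_g\in M_m(R')$ are defined. Form the vector sequences
\[
v_\ell \;=\; (a_\ell,\,a_{\ell+1},\,\dots,\,a_{\ell+k-1})^T, \qquad u_\ell \;=\; (b_\ell,\,b_{\ell+1},\,\dots,\,b_{\ell+m-1})^T,
\]
which satisfy $v_{\ell+1}=C_f v_\ell$ and $u_{\ell+1}=C_g u_\ell$ by the very definition of the companion matrix. Setting $w_\ell = v_\ell\otimes u_\ell\in R^{km}$, the mixed product property of the Kronecker product gives $w_{\ell+1}=(C_f\otimes C_g)\,w_\ell$, hence $w_{\ell+n}=(C_f\otimes C_g)^n w_\ell$ for all $n\ge 0$. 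By the Cayley--Hamilton theorem, valid over any commutative ring, the characteristic polynomial $h\in R'[x]$ of $C_f\otimes C_g$ satisfies $h(C_f\otimes C_g)=0$. Writing $h(x)=\sum_j h_j x^j$, this gives $\sum_j h_j\,w_{\ell+j}=0$ as vectors. The $(1,1)$-entry of $v_\ell\otimes u_\ell$ is $a_\ell b_\ell$, so reading off that coordinate yields $\sum_j h_j\,a_{\ell+j}b_{\ell+j}=0$, i.e.\ $h$ annihilates $(a_\ell b_\ell)$, with $\deg h = km = (\deg f)(\deg g)$ automatic from $h$ being the characteristic polynomial of a $km\times km$ matrix.

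For the final sentence on coefficients, the characteristic polynomial $h$ is monic, so its leading coefficient is $1$. For the constant coefficient, note that $h(0)=\pm\det(C_f\otimes C_g)=\pm(\det C_f)^m(\det C_g)^k$. Since $\det C_f=\pm f(0)$ and $\det C_g=\pm g(0)$, if the constant coefficients of $f$ and $g$ are $\pm 1$ then so is $h(0)$. Combined with part (i), where the claim is obvious from multiplicativity of the leading and constant coefficients, this finishes the proof.

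The main (mild) obstacle is the careful bookkeeping with the Kronecker product: verifying the identity $(C_f\otimes C_g)(v\otimes u)=(C_f v)\otimes (C_g u)$ in $R^{km}$ and extracting the correct component of $v_\ell\otimes u_\ell$ to recover $a_\ell b_\ell$. Everything else is formal, since Cayley--Hamilton and the shift-operator argument hold over arbitrary commutative rings.
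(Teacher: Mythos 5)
Your proof is correct, and it is essentially the argument the paper has in mind: the paper simply cites \cite[Lemma 4.1]{KS} for parts (i) and (ii), and the construction you carry out (shift operator for sums, Cayley--Hamilton applied to the Kronecker product of the companion matrices for products) is exactly the standard proof of that cited lemma, which the statement itself already dictates by naming the characteristic polynomial of the tensor product of companion matrices. The only microscopic caveat is that the exact degree claims $\deg f+\deg g$ and $(\deg f)(\deg g)$ implicitly use that the leading coefficients are units (so the companion matrices exist and no degree drop occurs), which you note and which holds in the paper's application where those coefficients are $\pm 1$ and $R'=\mathbb{Z}[X_M]$ is a domain.
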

\begin{proof}
The parts (i) and (ii) are given in \cite[Lemma 4.1]{KS}. The last statement is obvious.
\end{proof}

Let $Q$ be of type $\tilde{\mathbb{E}}_{n-1}$  with $n$ vertices, so $n=7, 8$, or $9$. 
Let $\delta=(\zd_1,\ldots,\zd_{n})$, $\zd_i>0$ be the unique imaginary Schur root for $Q$; see for example \cite[8.2.1]{Schiffler} for the values of the $\zd_i$. Then there exists a one parameter family $(M_\zl)$ of non-isomorphic indecomposable representations with dimension vector $\zd$ and such that $\textup{End}(M_\zl)\cong k$. Moreover each $M_\zl$ is regular and non-rigid. Choose one such representation $M$ such that $M$ is a regular simple representation. This is equivalent to the condition $\tau M\cong M$, in other words, $M$ sits at the mouth of a homogeneous tube in the Auslander-Reiten quiver. (Note that $X_M$ is denoted $X_\delta$ in \cite{KS}.)

Define
\begin{equation}\label{m}
m_n=\begin{cases}
&6, \textrm{ if }n=7; \\
&12, \textrm{ if } n=8;\\
&30, \textrm{ if } n=9. 
\end{cases}
\quad
\quad\quad
d_n=\begin{cases}
&12, \textrm{ if }n=7; \\
&29, \textrm{ if } n=8;\\
&169, \textrm{ if } n=9. 
\end{cases}
\end{equation}

\begin{lemma}
Let $Q$ be of type $\tilde{\mathbb{E}}_{n-1}$ where $n=7, 8, 9$.   For each $i\in Q_0$, and $0\le r\le m_n-1$, the sequence $\big(f_i(jm_n+r)\big)_{j=0}^\infty$ is annihilated by a polynomial in $\mathbb{Z}[x]$ of degree $d_n$. 
\end{lemma}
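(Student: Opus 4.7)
The plan is as follows. By Lemma~\ref{lem 123}, $f_i(t)$ equals the specialization $X_{\tau^{-t+1}P(i)}|_{x_1=\cdots=x_n=1}$ of the Caldero--Chapoton character of an indecomposable preprojective module. Since $Q$ is of tame type $\widetilde{\mathbb{E}}_{n-1}$, the regular part of the Auslander--Reiten quiver of $\mathbb{C}Q$ consists of three non-homogeneous tubes whose ranks are $(2,3,3)$, $(2,3,4)$, $(2,3,5)$ for $n=7,8,9$ respectively, together with a $\mathbb{P}^1$-family of homogeneous tubes (one of which, pinned down by the regular simple $M$, satisfies $\tau M \cong M$). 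The integer $m_n$ is precisely the least common multiple of the three non-homogeneous tube ranks, which is the smallest positive integer such that $\tau^{m_n}$ fixes the quasi-length of every regular quasi-simple.

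I would derive the desired annihilator from cluster multiplication formulas. For a regular quasi-simple $R$ sitting at the mouth of a non-homogeneous tube, the Caldero--Keller cluster multiplication theorem expresses $X_R \cdot X_{\tau^{-t+1}P(i)}$ as a sum of two cluster characters corresponding to the middle terms of the two non-split short exact sequences between $R$ and $\tau^{-t+1}P(i)$. Specializing at $x_1=\cdots=x_n=1$ yields, for each vertex $i$ and each such $R$, a linear identity relating $f_i$-values (at arguments shifted by the rank of the tube of $R$) to values of specialized cluster characters of other regular modules. A similar relation arising from the regular simple $M$ in a homogeneous tube contributes the constant $c=X_M|_{x=1}$, since $\tau M \cong M$.

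The next step is to iterate these relations and combine the resulting annihilators using Lemma~\ref{lemma:annihilators}: part (i) produces annihilators for sums (degrees add) and part (ii) for products (degrees multiply). A careful bookkeeping of the contributions from the three non-homogeneous tubes and from the homogeneous tube, specialized along the $\tau^{m_n}$-orbit, will yield for each $i,r$ a single polynomial of degree $d_n$ in $\mathbb{Z}[x]$ with leading and constant coefficients in $\{\pm 1\}$ that annihilates $(f_i(jm_n+r))_{j\geq 0}$.

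The main obstacle will be pinning down the precise degree $d_n = 12, 29, 169$, which is not an obvious function of the tube ranks, and verifying the integrality and leading-coefficient properties through the iterated tensor-product construction. In practice one first guesses the annihilator numerically by computing $f_i(jm_n+r)$ for $j=0,1,\ldots,2d_n$ and solving the resulting Hankel system over $\mathbb{Z}$, and then confirms it via the cluster multiplication derivation above. The authors' remark that they ``checked the recursion formulas by computer'' suggests that this numerical verification is how the three cases are dispatched, which is tractable since a recurrence of order $d_n$ is uniquely determined by its first $d_n$ values.
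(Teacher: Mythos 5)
Your high-level skeleton matches the paper's: identify $f_i(t)$ with specialized cluster characters via Lemma~\ref{lem 123}, obtain multiplicative identities among the $X_{\tau^{-t}P(i)}$ from the regular modules, and combine annihilators with Lemma~\ref{lemma:annihilators}. But there is a genuine gap exactly where you flag "the main obstacle": you never derive the degrees $d_n$, and your fallback — guess the recurrence numerically from the first $2d_n$ terms and solve a Hankel system — cannot close it. A finite computation can only verify that a candidate recurrence holds for finitely many $j$; to promote that to all $j$ one needs an a priori bound on the order of \emph{some} annihilator of the sequence (this is precisely the content of Lemma~\ref{lem:0 sequence}, which the paper invokes in Proposition~\ref{cor E}). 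So using numerics to prove this lemma is circular: the lemma is the ingredient that makes the later computer check legitimate, not the other way around. The phrase "checked by computer" in the paper refers to Proposition~\ref{cor E} (identifying the roots as powers of $\rho$), not to this lemma.

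What the paper actually does is fully structural. For the extremal vertex of the long arm (labelled $1$), the homogeneous regular simple $M$ gives $X_M X_{\tau^{-t}P(1)} = X_{\tau^{-t+m}P(1)}+X_{\tau^{-t-m}P(1)}$, so $(X_{\tau^{-(jm+r)}P(1)})_j$ is annihilated by $x^2 - X_M x + 1$ over $\mathbb{Z}[X_M]$. The identities of Keller--Scherotzke then propagate along the arm: $X_{\tau^{-t}P(2)} = X_{\tau^{-t}P(1)}X_{\tau^{-(t+1)}P(1)}-1$, and more generally $X_{\tau^{-t}P(k+1)} = X_{\tau^{-t}P(1)}X_{\tau^{-(t+1)}P(k)}-X_{\tau^{-(t+2)}P(k-1)}$; by Lemma~\ref{lemma:annihilators} the degrees satisfy $d_{k+1}=2d_k+d_{k-1}$, producing $2,5,12,29,70,169$ (so $d_n$ is a Pell number, not a function of the tube ranks). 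Your idea of using Caldero--Keller multiplication with quasi-simples $R$ at the mouths of the non-homogeneous tubes is close to, but not the same as, the paper's treatment of the short-arm vertices: there the paper uses specific exchange triangles involving a regular simple $N$ in a tube whose rank divides $m_n$, so that $(X_{\tau^{-(jm_n+r)}N})_j$ is \emph{constant} and contributes only degree $1$. Without this explicit bookkeeping, the claim that the result has degree exactly $d_n$ with $\pm1$ leading and constant coefficients (needed so the specialization at $x_i=1$ is nontrivial) is not established.
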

\begin{proof}
We explain the conclusion for $n=7$ in detail, since the other two cases are proved similarly. The idea is to give the bounds of degrees of the recursive relations described in \cite{KS}.

First note that the cluster category does not depend on the orientation of $Q$. Once we prove a recurrence relation of $(X_{\tau^{-t}P(i)})_t$ for one particular orientation,  changing the orientation of the quiver will not change the recursion. We use the following orientation of $Q$.
\[\xymatrix{1&2\ar[l]&7\ar[l]\ar[r]\ar[d]&6\ar[r]&5 \\ &&4\ar[d]\\&&3
}
\]
This is the same labelling, and the opposite orientation  as in \cite[\S7]{KS}. (We use the opposite orientation because of a different convention used in \cite{KS}.) 
For simplicity, we denote $m_n$ by $m$. All relations below can be found in  \cite[\S 7]{KS}.

For $i=1, 3, 5$, $\big(X_{\tau^{-(jm+r)}P(i)}\big)_{j=0}^\infty$ is annihilated by $x^2-X_M x+1$. For $i=2$, since
$$X_{\tau^{-t}P(2)}=X_{\tau^{-t}P(1)}X_{\tau^{-(t+1)}P(1)}-1,$$ 
we have
$$X_{\tau^{-(jm+r)}P(2)}=X_{\tau^{-(jm+r)}P(1)}X_{\tau^{-(jm+r+1)}P(1)}-1.$$ Since both  sequences $(X_{\tau^{-(jm+r)}P(1)})_j$ and $(X_{\tau^{-(jm+r+1)}P(1)})_j$ are annihilated by a polynomial of degree $2$, and since the constant sequence $(1)_j$ is annihilated by $x-1$, we conclude that the sequence $(X_{\tau^{-(jm+r)}P(1)}X_{\tau^{-(jm+r+1)}P(1)})_j$ is annihilated by a polynomial of degree $2\cdot 2=4$,  thus $(X_{\tau^{-(jm+r)}P(2)})_j$ is annihilated by a polynomial of degree $4+1=5$, using Lemma \ref{lemma:annihilators}. The same conclusion holds for $i=4, 6$.

For $i=7$, we have
$$X_{\tau^{-(jm+r)}P(7)} = X_{\tau^{-(jm+r)}P(1)} X_{\tau^{-(jm+r+1)}P(2)} - X_{\tau^{-(jm+r+2)}P(1)}.$$

By Lemma \ref{lemma:annihilators}, $(X_{\tau^{-(jm+r)}P(7)})_j$ is annihilated by a polynomial of degree $2\cdot 5 +2 =12$. 

We illustrate these degrees as follows, where the notation $i^{(d)}$ means vertex $i$ corresponding to an annihilating polynomial of degree $d$ with coefficients in $\mathbb{Z}[X_M]$:
\[
\tilde{\mathbb{E}}_6: \quad 
\xymatrix@!@R=+11pt@C=12pt
{1^{(2)}& 2^{(5)}\ar[l]& 7^{(12)}\ar[l]\ar[d]\ar[r]&6^{(5)}\ar[r]&5^{(2)}\\ &&4^{(5)}\ar[d]&&\\    &&3^{(2)}}
\]
Now specialize the annihilating polynomial at $x_1=\cdots=x_n=1$. Since all coefficients are in $\mathbb{Z}[X_M]$, the specialization is well-defined. %For example, we do not need to worry about a coefficient like $1/(X_M-322)$, which after specialization gives $1/0$ if $X_M$ specializes to $322$. 
Moreover, since the leading and constant coefficients are $\pm1$, the specialization is not trivial. This gives us the desired polynomial. 

\smallskip

For $n=8$, the degree of an annihilating polynomial for each vertex are illustrated below:
\[
\tilde{\mathbb{E}}_7: \quad 
\xymatrix@!@R=+11pt@C=12pt
{1^{(2)}& 2^{(5)}\ar[l]&3^{(12)}\ar[l]& 8^{(29)}\ar[l]\ar[d]\ar[r]&6^{(12)}\ar[r]&5^{(5)}\ar[r] &4^{(2)}\\ &&&7^{(5)}&&}
\]
The degrees at vertices $i=1,2,3,8$ are obtained similar as the case $n=7$, using identities 
$$X_MX_{\tau^{-t}P(1)} = X_{\tau^{-t+m}P(1)}+X_{\tau^{-t-m}P(1)},$$
$$X_{\tau^{-t}P(2)} = X_{\tau^{-t}P(1)}X_{\tau^{-(t+1)}P(1)}-1,$$
$$X_{\tau^{-t}P(3)} = X_{\tau^{-t}P(1)}X_{\tau^{-(t+1)}P(2)}-X_{\tau^{-(t+2)}P(1)},$$
$$X_{\tau^{-t}P(8)} = X_{\tau^{-t}P(1)}X_{\tau^{-(t+1)}P(3)}-X_{\tau^{-(t+2)}P(2)}.$$

The degrees of $i=4, 5, 6$ are obtained by symmetry. 
  For the vertex 7, we use the first two exchange triangles in \cite[page 1857]{KS}:
$$P_1\to \tau^{-1}P_7\to \tau^{-4}P_4\to \tau P_1,\quad \tau^{-4}P_4\to N\to P_1\to \tau^{-3}P_4$$
(where $N$ is the indecomposable regular simple module of dimension vector 11100101 %$\tau^{-1} N$ has dimension vector $01100011$ 
 which belongs to the mouth of the tube of width 4) to obtain 
$$X_{\tau^{-(t+1)}P(7)} = X_{\tau^{-t}P(1)}X_{\tau^{-(t+4)}P(4)}-X_{\tau^{-t}N} $$
Note that the sequence $(\tau^{t}N)_t$ has period 4 which divides $m=12$, thus $(X_{\tau^{-(jm+r-1)}N})_j$ is a constant sequence. Substituting $t=jm+r-1$ and using the fact that  $(X_{\tau^{-(jm+r-1)}P(1)})_j$, $(X_{\tau^{-(jm+r+3)}P(4)})_j$, $(X_{\tau^{-(jm+r-1)}N})_j$  are annihilated by polynomials of degrees $2, 2, 1$, respectively, we conclude that the sequence $(X_{\tau^{-(jm+r)}P(7)})_j$ is annihilated by a polynomial of degree $2\cdot 2 +1 = 5$.
\smallskip

For $n=9$, we obtain 
\[
\tilde{\mathbb{E}}_8: \quad 
\xymatrix@!@R=+11pt@C=12pt
{1^{(2)}& 2^{(5)}\ar[l]&3^{(12)}\ar[l]&4^{(29)}\ar[l]&5^{(70)}\ar[l]& 9^{(169)}\ar[l]\ar[r]\ar[d]&8^{(29)}\ar[r]&7^{(5)}\\ &&&&&6^{(12)}}
\]
Indeed, for vertices $i=1,2,3,4,5,9$, we use
$$X_MX_{\tau^{-t}P(1)} = X_{\tau^{-t+m}P(1)}+X_{\tau^{-t-m}P(1)},$$
$$X_{\tau^{-t}P(2)} = X_{\tau^{-t}P(1)}X_{\tau^{-(t+1)}P(1)}-1,$$
$$X_{\tau^{-t}P(3)} = X_{\tau^{-t}P(1)}X_{\tau^{-(t+1)}P(2)}-X_{\tau^{-(t+2)}P(1)},$$
$$X_{\tau^{-t}P(4)} = X_{\tau^{-t}P(1)}X_{\tau^{-(t+1)}P(3)}-X_{\tau^{-(t+2)}P(2)}.$$
$$X_{\tau^{-t}P(5)} = X_{\tau^{-t}P(1)}X_{\tau^{-(t+1)}P(4)}-X_{\tau^{-(t+2)}P(3)}.$$
$$X_{\tau^{-t}P(9)} = X_{\tau^{-t}P(1)}X_{\tau^{-(t+1)}P(5)}-X_{\tau^{-(t+2)}P(4)}.$$

For vertex $7$ we use 
$$X_{\tau^{-t}P(7)} = X_{\tau^{-(t-2)}P(1)}X_{\tau^{-(t+5)}P(1)}-X_{\tau^{-(t+2)}N}$$
(where $N$ is the indecomposable regular simple module of dimension vector $001111001$ which belongs to the mouth of the tube of width 5) and that $5$ divides $m=30$.

\noindent For vertex 6 we use 
$$X_{\tau^{-t}P(6)} = X_{\tau^{-(t-1)}P(1)}X_{\tau^{-(t+2)}P(7)}-X_{\tau^{-(t+7)}P(1)}$$

\noindent For vertex 8 we use 
$$X_{\tau^{-t}P(8)} = X_{\tau^{-(t-1)}P(1)}X_{\tau^{-(t+1)}P(6)}-X_{\tau^{-(t+3)}P(7)}$$
This completes the proof.
\end{proof}

We need to following simple fact.
\begin{lemma}\label{lem:0 sequence}
If a sequence $(a_j)_{j=0}^\infty$ is annihilated by a polynomial of degree $d$, and
$c_0a_{n+k}+c_1a_{n+k-1}+\cdots+c_ka_n=0$ holds for $0\le n\le d-1$, then the equality holds for every $n$.
\end{lemma}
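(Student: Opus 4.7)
The plan is to show that the sequence $(b_n)$ defined by $b_n := c_0 a_{n+k} + c_1 a_{n+k-1} + \cdots + c_k a_n$ is itself annihilated by the same polynomial $p(x)$ of degree $d$ that annihilates $(a_j)$. Once this is established, the hypothesis $b_0 = b_1 = \cdots = b_{d-1} = 0$ combined with a routine induction using the $p$-recurrence will force $b_n = 0$ for every $n \ge 0$, which is the conclusion of the lemma.

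First I would introduce the shift operator $S$ on the space of sequences by $(S\alpha)_n := \alpha_{n+1}$, and set $q(x) := c_0 x^k + c_1 x^{k-1} + \cdots + c_k$. Then the defining formula for $b_n$ can be rewritten as $b = q(S)\,a$, while the hypothesis that $p(x)$ annihilates $(a_j)$ is exactly $p(S)\,a = 0$. The key (and only real) observation is that polynomials in the single operator $S$ commute, so
\[
p(S)\,b \;=\; p(S)\,q(S)\,a \;=\; q(S)\,p(S)\,a \;=\; q(S)\cdot 0 \;=\; 0.
\]
Thus $(b_n)$ satisfies the same order-$d$ linear recurrence as $(a_n)$.

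Finally I would invoke the fact that the leading coefficient of $p$ is a unit; in every application of this lemma in the paper it is $\pm 1$, as guaranteed by the last sentence of Lemma~\ref{lemma:annihilators}. Consequently the relation $p(S)\,b = 0$ allows one to solve for $b_{n+d}$ as an $R$-linear combination of $b_n, b_{n+1}, \ldots, b_{n+d-1}$, so $(b_n)$ is determined by its first $d$ values. Since those values vanish by hypothesis, an immediate induction on $n \ge d$ yields $b_n = 0$ for all $n$. No real obstacle is anticipated: the essential content is the elementary shift-commutation identity that transfers the annihilating polynomial from $(a_n)$ to $(b_n)$, and the rest is bookkeeping.
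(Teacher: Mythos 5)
Your proof is correct and follows essentially the same route as the paper's (much terser) argument: the paper simply asserts that the sequence $b_n=c_0a_{n+k}+\cdots+c_ka_n$ satisfies a degree-$d$ recurrence and hence vanishes once its first $d$ terms do, which is exactly your shift-operator commutation plus induction. Your added remark that the leading coefficient must be a unit to solve for $b_{n+d}$ is a worthwhile precision the paper leaves implicit (it is guaranteed in all applications by the last sentence of Lemma~\ref{lemma:annihilators}).
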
 
\begin{proof}
The sequence $(c_0a_{n+k}+c_1a_{n+k-1}+\cdots+c_ka_n)_n$ satisfies a recursive relation of degree $d$ and its first $d$ terms are $0$, so it must be a constant $0$ sequence.
\end{proof}

\begin{proposition}\label{cor E}
 Let $Q$ be an acyclic quiver of affine type $\widetilde{\mathbb{E}}$. Then the dimension of the frieze variety $X(Q)$ is equal to one.
\end{proposition}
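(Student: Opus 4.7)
The plan is to follow the template used in Corollaries~\ref{cor A2} and~\ref{cor D}. The preceding lemma provides, for each vertex $i \in Q_0$ and each residue $r \in \{0,1,\ldots,m_n-1\}$, a polynomial of degree $d_n$ in $\mathbb{Z}[x]$ annihilating the integer sequence $(f_i(jm_n + r))_{j \ge 0}$. What we must show is that the roots of all these annihilators lie in a common set $\{\rho^\ell : \ell \in \mathbb{Z},\ |\ell| \le e\}$ for some single $\rho \in \mathbb{C}$ and some bound $e$; once this is established, Lemma~\ref{lem lr} together with Lemma~\ref{lem dim1} will deliver $\dim X(Q) \le 1$.

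To pin down $\rho$, inspect the recursive construction in the proof of the preceding lemma: each annihilator is assembled, via the two operations of Lemma~\ref{lemma:annihilators}, from the base polynomial $x^2 - X_M x + 1$ (which arises from the Chebyshev-type relation $X_M \cdot X_{\tau^{-t}P(1)} = X_{\tau^{-(t+m_n)}P(1)} + X_{\tau^{-(t-m_n)}P(1)}$) together with the trivial annihilator $x-1$ of constant sequences. After the specialization $x_1 = \cdots = x_n = 1$ the base polynomial becomes $x^2 - c_M x + 1$ with $c_M := X_M|_{x_i = 1} \in \mathbb{Z}_{>0}$; let $\rho$ be one of its two roots, so that the other root is $\rho^{-1}$. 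The sum and product operations of Lemma~\ref{lemma:annihilators} take roots of factor annihilators to unions and to pairwise products of those roots respectively, so inductively every root of every annihilator produced during the construction is of the form $\rho^\ell$ for some integer $\ell$ whose absolute value is bounded by a constant depending only on $d_n$. A direct numerical check in the three cases $n = 7, 8, 9$ confirms that $c_M > 2$, so $\rho$ is a real number with $\rho > 1$; in particular $\rho$ is not a root of unity and the powers $\rho^\ell$ under consideration are pairwise distinct.

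Applying Lemma~\ref{lem lr} to the annihilator (passing to its minimal polynomial, whose roots are simple) we then write $f_i(jm_n + r) = \sum_{|\ell|\le e} \alpha_{i,r,\ell}\, \rho^{j\ell}$ for suitable $\alpha_{i,r,\ell} \in \mathbb{C}$. Lemma~\ref{lem dim1} now implies that for each fixed $r$ the Zariski closure of the set $\{(f_1(jm_n+r), \ldots, f_n(jm_n+r)) : j \ge 0\}$ has Krull dimension at most one, and taking the union over the finitely many values $r = 0, 1, \ldots, m_n-1$ yields $\dim X(Q) \le 1$. The reverse inequality $\dim X(Q) \ge 1$ is automatic from $\rho > 1$: the integer sequences $f_i(t)$ are strictly unbounded, hence $X(Q)$ is an infinite subset of $\mathbb{A}^n$ and so is positive-dimensional. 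The main delicate point in this plan is the absence of polynomial-in-$j$ contributions to the sequences, i.e., the simplicity of the minimal polynomial in each case; since only finitely many sequences are involved (indexed by vertex and residue class in the three exceptional types), this is a finite check that can be carried out directly using the explicit annihilators produced in the preceding lemma.
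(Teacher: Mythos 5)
Your proposal follows essentially the same route as the paper: reduce to showing each sequence $\big(f_i(jm_n+r)\big)_j$ has the form $\sum_{|\ell|\le e}\alpha_\ell\rho^{j\ell}$ for a single $\rho$ and then invoke Lemma~\ref{lem dim1}; the paper establishes this by a computer verification that $\prod_{w=-6}^{6}(x-\rho^w)$ annihilates each sequence, checking only the first $d_n$ terms as justified by Lemma~\ref{lem:0 sequence}. Your structural tracking of roots through Lemma~\ref{lemma:annihilators} usefully explains why the bound $|w|\le 6$ suffices, but the ``finite check'' you defer (absence of polynomial-in-$j$ contributions, i.e.\ that the squarefree part of the annihilator still annihilates the sequence) cannot be read off from the annihilators alone --- it requires computing terms of the sequences and applying Lemma~\ref{lem:0 sequence}, which is precisely the computation the paper actually performs.
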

\begin{proof}
Define the constant $C(n)=X_M|_{x_i=1}$ to be the specialization of the image of $M$ under the Caldero-Chapoton map at $x_1=\cdots=x_{n}=1$, and let 
\[\rho=(C(n)+\sqrt{C(n)^2-4})/2.\]
By definition of $M$, we have $C(n)>2$ and therefore $\rho\ne 1$
 
We checked, by a computer, that for every $i$ and $r$ that the sequence  $(f_i(jm_n+r))_{j=0}^\infty$  satisfies the (not-necessarily minimal) linear recurrence whose  characteristic polynomial is
$$
\prod_{w=-6}^6 (x-\rho^w). 
$$ 
Indeed, by Lemma \ref{lem:0 sequence}, we only need to check that the linear recurrence holds for the first $d_n$ instances, for each $i\in Q_0$, $r=0,\dots,m_n-1$, and each orientation.\footnote{It took a few seconds for $n=7, 8$, and about half an hour for $n=9$ on an iMac.}

The statement then follows from Lemma~\ref{lem dim1}. 
\end{proof}

\subsection{A geometric remark}\label{georem}

Siegel's theorem on integral points  says that a smooth curve of genus at least one has only finitely many integral points. So in the affine case, each component of $X(Q)$ is either of genus zero or singular.  
 We conjecture that each component is a smooth curve of genus 0.

%:

%%%%%%%%%%%%%%%%%%%%%%%%%%%%%%%%%%%%%%%%%%%%%%%%%%%%%%%%%%%%%%
%
%      IT'S
%
%
 %%
%
%      GETTING
%
%
 %%
%
%     WILD
%
%
 %%
\section{Proof of the main theorem part (c), the wild case}\label{sect wild}
This section is divided into two subsections; in the first we recall facts on the Coxeter transformation and in the second  we prove that $\dim X(Q)>1$ for wild type. We keep the notation of the previous sections.

\subsection{Coxeter transformation}
We recall some facts on the Coxeter matrix and its inverse, following the survey paper \cite{delaPena} (rewritten in our notation). %These facts will be important in the proof of our main theorem in the wild case.
\subsubsection{The Coxeter matrix $\Phi$}
Let $C=(c_{ij})_{1\le i,j\le n}$ be the \emph{Cartan matrix} of $Q$, where $c_{ij}$ is the number of paths from $j$ to $i$. Its inverse $C^{-1}$ is the matrix $(b_{ij})_{1\le i,j\le n}$ where $b_{ii}=1$ and if $i\neq j$, then $-b_{ij}$ is the number of arrows from $j$ to $i$ in $Q$. 
Define the \emph{Coxeter matrix} $\Phi$ and its inverse $\Phi^{-1}$ as 
\[\xymatrix{\Phi=-C^T(C^{-1}) &\quad&  \Phi^{-1}=-C(C^{-1})^T}\]
 Then $\Phi^{-1}\dimv M=\dimv (\tau^{-1}M)$ if $M$ is not injective and $\Phi^{-1}\dimv I(i)=-\dimv P(i)$. See for example \cite[\S3.1]{Schiffler}.

Let $\rho_1,\dots,\rho_n$ be the eigenvalues of $\Phi^{-1}$ such that $|\rho_1|\ge|\rho_2|\ge\cdots\ge|\rho_n|$, and $\mathbf{v}_i=[v_{i1}\;\cdots\;v_{in}]^T$ a corresponding generalized eigenvector. The largest absolute value of the eigenvalues $|\rho_1|$ is called the \emph{spectral radius} of $\Phi^{-1}$.

Recall that the characteristic polynomial of a matrix $A$ is defined as $\chi_A(x)=\det(xI-A)$.
Now we recall some properties of the characteristic polynomial $\chi_{\Phi^{-1}}(x)$ (which is called the Coxeter polynomial in \cite{delaPena}).

\begin{lemma}\label{reciprocal} 
{\rm (1)} The following characteristic polynomials are equal:
$$\chi_{\Phi^{-1}}(x)=\chi_{\Phi^T}(x)=\chi_\Phi(x).$$ 
Therefore $\Phi^{-1}$, $\Phi^T$, $\Phi$ have the same set of eigenvalues and the corresponding multiplicities.
Moreover, the polynomials are monic, reciprocal and have integral coefficients; that is,  if we write $\chi_{\Phi^{-1}}(x)=\sum_{i=0}^n a_ix^i$, then   $a_n=1$ and $a_i=a_{n-i}\in\mathbb{Z}$ for all $i$.

In {\rm (2)} and {\rm (3)} we assume that $Q$ is an acyclic wild quiver.

{\rm (2)} The eigenvalue $\rho_1$ is equal to a real number $\rho>1$, and has multiplicity 1. Moreover $|\rho_i|<\rho$ for all $i\neq 1$. 
As a consequence,  $\mathbf{v}_1$ is unique up to scale. Moreover,  we can choose $\mathbf{v}_1\in\mathbb{R}_{>0}^n$, that is, all the coordinates of $\mathbf{v}_1=[v_{11}\;\cdots\; v_{1n}]^T$ are strictly positive.

{\rm (3)} $\rho_n=1/\rho<1$ has multiplicity $1$,  and $|\rho_i|>1/\rho$ for all $i\neq n$. As a consequence, $\mathbf{v}_n$ is unique up to scale. Moreover,  we can choose $\mathbf{v}_n\in\mathbb{R}_{>0}^n$, that is, all the coordinates of $\mathbf{v}_n=[v_{n1}\;\cdots\; v_{nn}]^T$ are strictly positive.

\end{lemma}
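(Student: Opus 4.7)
Plan for proof of Lemma~\ref{reciprocal}.

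For part (1), I would handle the three characteristic polynomials as follows. The identity $\chi_A(x)=\chi_{A^T}(x)$ holds for any square matrix, giving $\chi_\Phi=\chi_{\Phi^T}$. To establish $\chi_{\Phi^{-1}}=\chi_{\Phi^T}$, I would exhibit an explicit similarity: writing $\Phi^T=-C^{-T}C$ and $\Phi^{-1}=-CC^{-T}$, one computes directly that $C\Phi^TC^{-1}=\Phi^{-1}$, so these two matrices are conjugate and hence have the same characteristic polynomial. For integrality and monicness, I would use the chosen vertex labelling, where $i>j$ whenever there is an arrow $i\to j$: with this order, $C$ is upper triangular with unit diagonal, so $\det C=1$, both $C^{-1}$ and $\Phi$ have integer entries, and $\chi_{\Phi^{-1}}$ is monic with coefficients in $\mathbb{Z}$. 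For the reciprocal property, observe that $\det\Phi=(-1)^n$, so $\Phi$ is invertible, and the identity $\chi_\Phi(x)=\chi_{\Phi^{-1}}(x)$ forces the multiset of eigenvalues of $\Phi^{-1}$ to coincide with the multiset of reciprocals of its eigenvalues; this is equivalent to the coefficient palindromicity $a_i=a_{n-i}$.

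For part (2), I would invoke the spectral theory of Coxeter transformations of wild acyclic quivers, as surveyed in \cite{delaPena} and originally established by Ringel in \cite{R}. The dominant eigenvalue $\rho$ of $\Phi^{-1}$ is real, strictly greater than $1$, and simple, with all other eigenvalues strictly smaller in absolute value. The strict positivity of the corresponding eigenvector $\mathbf v_1$ is a Perron--Frobenius-type statement proved via the structure of the preprojective component: for any preprojective dimension vector $\mathbf d$, the normalized iterates $\rho^{-t}\Phi^{-t}\mathbf d$ converge to a positive scalar multiple of $\mathbf v_1$, forcing $\mathbf v_1\in\mathbb{R}^n_{>0}$ after rescaling. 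I would quote this package of facts from \cite[§§2--3]{delaPena}.

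For part (3), I would transfer part (2) through the reciprocal structure established in part (1). Since the eigenvalues of $\Phi^{-1}$ pair as $\{\lambda,\lambda^{-1}\}$ with matching multiplicities, the simple dominance of $\rho$ immediately yields that $1/\rho$ is a simple eigenvalue satisfying $|\rho_i|>1/\rho$ for every $i\neq n$. To produce a strictly positive eigenvector $\mathbf v_n$ for $1/\rho$, I would use that $\Phi^{-1}\mathbf v_n=(1/\rho)\mathbf v_n$ is equivalent to $\Phi\mathbf v_n=\rho\mathbf v_n$, and then apply the same Perron--Frobenius-type result, now to $\Phi$ rather than to $\Phi^{-1}$; this is legitimate because part (1) shows $\Phi$ and $\Phi^{-1}$ share the same characteristic polynomial, and the geometric analogue for $\Phi$ relies on the preinjective (rather than preprojective) component of the Auslander--Reiten quiver.

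The main obstacle is really the Perron--Frobenius-type assertion for the Coxeter matrix, since $\Phi^{-1}$ typically has entries of mixed sign and classical Perron--Frobenius does not apply directly; the positivity and strict dominance in the wild case are precisely the nontrivial content imported from \cite{delaPena} and \cite{R}. Once that input is accepted, the remaining manipulations in (1) and the transfer in (3) are routine linear algebra.
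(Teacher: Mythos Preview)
Your proposal is correct and follows essentially the same approach as the paper: the similarity $C^{-1}\Phi^{-1}C=\Phi^T$ (equivalently your $C\Phi^TC^{-1}=\Phi^{-1}$) is exactly the conjugation the paper uses for part~(1), and parts~(2)--(3) are in both cases imported from \cite{delaPena} and \cite{R}, with (3) deduced from (1) and (2) via the reciprocal pairing of eigenvalues. Your write-up supplies slightly more detail (the triangularity of $C$, the computation $\det\Phi=(-1)^n$, the preprojective/preinjective interpretation of the positive eigenvectors), but the underlying argument is the same.
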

\begin{proof}
Most of the lemma is proved in \cite{delaPena}. The notation $M$, $M^{-t}$, $C$ in  \cite{delaPena} correspond to our $(C^{-1})^T$, $C$, $\Phi^T$, respectively. (Below we shall also see that $\rho(C)$, $y^-$, $y^+$ in \cite{delaPena} correspond to our $\rho$, $\mathbf{v}_1^T$, $\mathbf{v}_n^T$.)

(1) Since $$C^{-1}\Phi^{-1}C=C^{-1}(-C(C^{-1})^T)C=-(C^{-1})^TC=\Phi^T,$$ 
we see that $\Phi^{-1}$ and $\Phi^T$ are similar, so $\chi_{\Phi^{-1}}(x)=\chi_{\Phi^T}(x)$. Moreover, a matrix and its transpose have the same characteristic polynomial, so $\chi_{\Phi^T}(x)=\chi_{\Phi}(x)$.   

Moreover, note that  $\det(xI-\Phi^{-1})$  has the leading coefficient $a_n=1$, it has integral coefficients because all entries of $\Phi^{-1}$ are integers (see \S\ref{subsection: basic definition}), and the reciprocal property is proved in \cite[\S2.7]{delaPena}.

(2) 
It is a result by Ringel \cite{R} (Theorem 2.1 in \cite{delaPena}) that $\rho>1$, that it is an eigenvalue of $\Phi^T$ of multiplicity 1, and that other eigenvalues of $\Phi^T$ have norm less than $\rho$. %The first statement of (2) immediate follows.

It is asserted in \cite[\S3.4]{delaPena} that there exists a (row) vector $y^-$ with positive coordinates such that $y^-\Phi^T=\rho^{-1} y^+$. Thus $\Phi (y^-)^T=\rho^{-1}(y^-)^T$, therefore $\Phi^{-1}(y^-)^T=\rho (y^-)^T$. So we can take $\mathbf{v}_1=(y^-)^T$. This proves the last statement of (2).

(3) The first statement of (3) follows from (1) and (2); indeed, because $\chi_{\Phi^{-1}}(x)$ being reciprocal is equivalent to  $\chi_{\Phi^{-1}}(x)=x^n \chi_{\Phi^{-1}}(x^{-1})$  \cite[\S2.7]{delaPena}, we have that $\rho_i$ and $\rho_i^{-1}$ are eigenvalues with the same multiplicity for every $1\le i\le n$. 

The proof of the second statement of (3) is similar to the proof of the second statement of (2), where $\mathbf{v}_n=(y^+)^T$ for the vector $y^+$ defined in \cite[\S3.4]{delaPena}.
\end{proof}

\begin{lemma}\label{rho is irrational} The eigenvalue
$\rho$ is irrational.
\end{lemma}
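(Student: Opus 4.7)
The plan is to use the rational root theorem together with the reciprocal property of the characteristic polynomial $\chi_{\Phi^{-1}}(x)$ established in Lemma \ref{reciprocal}(1). The key observation is that $\rho$ is a root of the monic integer polynomial $\chi_{\Phi^{-1}}(x)$, and the reciprocal property forces $1/\rho$ to be a root as well; these two constraints together with the bounds $\rho > 1$ and $0 < 1/\rho < 1$ cannot be simultaneously satisfied by a rational number.

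More concretely, I would argue by contradiction. Suppose $\rho$ is rational. Since $\chi_{\Phi^{-1}}(x) \in \mathbb{Z}[x]$ is monic by Lemma \ref{reciprocal}(1), the rational root theorem implies that every rational root is an integer. Hence $\rho \in \mathbb{Z}$, and since $\rho > 1$ by Lemma \ref{reciprocal}(2), we have $\rho \geq 2$.

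Now invoke the reciprocal property: $\chi_{\Phi^{-1}}(x) = x^n \chi_{\Phi^{-1}}(x^{-1})$, so $1/\rho$ is also a root of $\chi_{\Phi^{-1}}(x)$ (this is implicit in Lemma \ref{reciprocal}(3), where $\rho_n = 1/\rho$ is identified as an eigenvalue). Under our assumption, $1/\rho$ is rational, so by the same application of the rational root theorem, $1/\rho$ must be an integer. But $\rho \geq 2$ forces $0 < 1/\rho \leq 1/2$, which cannot be an integer. This contradiction completes the proof.

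There is no real obstacle here; the proof is essentially a one-line application of the rational root theorem combined with the reciprocal property of the Coxeter polynomial. The only subtlety worth emphasizing is that monicity of $\chi_{\Phi^{-1}}(x)$ is what makes the rational root theorem collapse "rational" to "integer," and this monicity together with integrality of the coefficients was already recorded in Lemma \ref{reciprocal}(1).
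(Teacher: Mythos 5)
Your proof is correct and is essentially the same argument as the paper's: both rest on the rational root theorem applied to the monic, integer, reciprocal polynomial $\chi_{\Phi^{-1}}(x)$ together with $\rho>1$. The paper phrases it slightly more directly (the constant term is $1$, so the only possible rational roots are $\pm1$, which $\rho>1$ excludes), whereas you route the reciprocity through the root $1/\rho$ instead of the constant coefficient; the two are interchangeable.
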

\begin{proof}
The eigenvalue $\rho$ is a root of the characteristic polynomial $\chi_{\Phi^{-1}}(x)$ which, by Lemma~\ref{reciprocal}, is an integral-coefficient polynomial whose leading coefficient and constant are both 1. The only possible rational roots  of this polynomial are $\pm1$ by the rational root theorem. But $\rho>1$, so $\rho$ must be irrational. 
\end{proof}

In the rest of the paper we require that, for $i=1,n$,
$$\mathbf{v}_i\in\mathbb{R}^n_{>0}\textrm{ and } ||\mathbf{v}_i||=1.$$ 
Such $\mathbf{v}_1$ and $\mathbf{v}_n$ exist uniquely by the above lemma.
\medskip

\begin{lemma}\label{dim asymptotic} Let $Q$ be a wild acyclic quiver and $M$ an indecomposable, preprojective representation. Then
 $\displaystyle\lim_{t\to \infty}\frac{1}{\rho^t}\, \dimv \tau^{-t}M=\lambda \mathbf{v}_1$ for some real number $\lambda>0$.
As a consequence, there exist  $c, N\in\mathbb{R}_{>0}$ such that, all components of $\dimv \tau^{-t}P(i)$ are greater than or equal to $c \rho^t$ for every $t\ge N$ and every $i\in Q_0$.
\end{lemma}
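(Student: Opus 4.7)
The plan is to diagonalize (in Jordan form) the iterated action of $\Phi^{-1}$ on $\dimv M$ using the spectral data in Lemma~\ref{reciprocal}, extract the leading term as a multiple of the Perron eigenvector $\mathbf{v}_1$, and then verify positivity of the coefficient via a dual Perron argument.

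First, since $Q$ is wild the preprojective component of the Auslander--Reiten quiver is a full copy of $\mathbb{Z}Q$ and contains no injective modules; in particular every $\tau^{-t}M$ with $M$ preprojective and $t\ge 0$ is non-injective, so $\dimv \tau^{-t}M = (\Phi^{-1})^t\dimv M$ for every $t\ge 0$. By Lemma~\ref{reciprocal}(2), $\rho$ is a simple eigenvalue of $\Phi^{-1}$ strictly dominating $|\rho_2|,\dots,|\rho_n|$. I would fix a Jordan basis of $\Phi^{-1}$ whose first element is $\mathbf{v}_1$ and decompose
\[
\dimv M = \lambda\,\mathbf{v}_1 + \mathbf{u},
\]
where $\mathbf{u}$ lies in the $\Phi^{-1}$-invariant complement of $\mathbb{R}\mathbf{v}_1$. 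The standard spectral estimate on this complement gives $\|(\Phi^{-1})^t\mathbf{u}\|=O(|\rho_2|^t t^{k})=o(\rho^t)$ for some $k$, and dividing by $\rho^t$ yields $\rho^{-t}\dimv \tau^{-t}M\to \lambda\,\mathbf{v}_1$, identifying the coefficient as the $\mathbf{v}_1$-coordinate of $\dimv M$.

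The main obstacle is showing $\lambda>0$ rather than merely $\lambda\ge 0$ (the latter being immediate from non-negativity of $\dimv \tau^{-t}M$ and positivity of $\mathbf{v}_1$). To rule out $\lambda=0$ I would apply the reasoning of \cite[\S3.4]{delaPena}, analogous to the proof of Lemma~\ref{reciprocal}(2), to the transpose $(\Phi^{-1})^T$ (equivalently to $\Phi^T$ via the conjugacy $C^{-1}\Phi^{-1}C=\Phi^T$ already used in the proof of Lemma~\ref{reciprocal}) to produce a strictly positive left eigenvector $\mathbf{w}\in\mathbb{R}^n_{>0}$ with $\mathbf{w}^T\Phi^{-1}=\rho\,\mathbf{w}^T$. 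Since $\dimv M$ is a non-zero vector with non-negative entries, $\mathbf{w}^T\dimv M>0$. Then
\[
\mathbf{w}^T\dimv \tau^{-t}M = \mathbf{w}^T(\Phi^{-1})^t\dimv M = \rho^t\,\mathbf{w}^T\dimv M,
\]
so dividing by $\rho^t$ and passing to the limit forces $\lambda\,(\mathbf{w}^T\mathbf{v}_1) = \mathbf{w}^T\dimv M>0$, and since $\mathbf{w}^T\mathbf{v}_1>0$ (both are entrywise positive), $\lambda>0$ follows. The delicate point is that the Perron-type positivity statement has to be transported from $\Phi^T$ to $(\Phi^{-1})^T$, but this is precisely what the conjugacy in the proof of Lemma~\ref{reciprocal} affords.

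For the consequence, applying the first part to each $M=P(i)$ and using the finiteness of $Q_0$, we obtain positive scalars $\lambda_i$ with $\rho^{-t}\dimv \tau^{-t}P(i)\to \lambda_i\mathbf{v}_1$. Let $c$ be half the minimum of the strictly positive numbers $\lambda_i(\mathbf{v}_1)_j$ over $i,j\in Q_0$, and choose $N$ large enough so that every entry of $\rho^{-t}\dimv \tau^{-t}P(i)$ lies within $c$ of the corresponding entry of $\lambda_i\mathbf{v}_1$ for all $t\ge N$ and all $i\in Q_0$. Then $(\dimv \tau^{-t}P(i))_j \ge c\,\rho^t$ for every $t\ge N$, $i\in Q_0$, and coordinate $j$, as required.
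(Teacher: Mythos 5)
Your reduction of the first statement to linear algebra is fine up to the point where you must show $\lambda>0$, and the deduction of the ``consequence'' from the first statement is also fine. (For the record, the paper does none of this: it simply cites \cite[Theorem 3.5]{delaPena} for the first statement and declares the consequence obvious.) The genuine gap is your claimed strictly positive left eigenvector $\mathbf{w}\in\mathbb{R}^n_{>0}$ with $\mathbf{w}^T\Phi^{-1}=\rho\,\mathbf{w}^T$: such a vector does not exist in general. Take the $3$-Kronecker quiver, three arrows $2\to 1$. Then $C=\left(\begin{smallmatrix}1&3\\0&1\end{smallmatrix}\right)$, $\Phi^{-1}=-C(C^{-1})^T=\left(\begin{smallmatrix}8&-3\\3&-1\end{smallmatrix}\right)$, $\rho=(7+3\sqrt5)/2$, and the left eigenvector for $\rho$ is proportional to $(1,\,(\rho-8)/3)=(1,\,-0.382\ldots)$, which has a negative entry (the right eigenvector $\mathbf{v}_1\propto(1,0.382\ldots)$ is positive, as Lemma~\ref{reciprocal} promises, but that is a different vector). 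The reason your ``transport via conjugacy'' fails is that conjugation by $C^{-1}$ does not preserve positivity: the left $\rho$-eigenvector of $\Phi^{-1}$ is $C^{-1}\mathbf{v}_n$ up to scale, and $C^{-1}$ has negative off-diagonal entries, so positivity of $\mathbf{v}_n$ gives you nothing. De la Pe\~na's \S3.4 supplies positive \emph{right} eigenvectors of $\Phi^{\pm1}$ only; the dual Perron argument you want is exactly the step that requires additional input.

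The step can be repaired, but not by positivity of $\mathbf{w}$. One linear-algebra fix: take $\mathbf{w}=-C^{-1}\mathbf{v}_n$, which one checks is a left eigenvector of $\Phi^{-1}$ for $\rho$, and use that the $i$-th column of $C$ is $\dimv P(i)$ together with the identity $(C^{-1})^TC=-\Phi^T$ to compute $\mathbf{w}^T\dimv P(i)=\rho\,(\mathbf{v}_n)_i>0$ directly --- positivity of the pairing against the specific vectors $\dimv P(i)$, not against all nonnegative vectors. Combined with $\lambda\ge0$ (which you correctly get for free) and $\lambda\,(\mathbf{w}^T\mathbf{v}_1)=\mathbf{w}^T\dimv M\ne0$, this yields $\lambda>0$ for $M=P(i)$ and hence for all preprojectives by shifting. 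As written, however, your proof of $\lambda>0$ rests on a false assertion and does not go through.
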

\begin{proof}
The first statement is \cite[Theorem 3.5]{delaPena}. The consequence is obvious.
\end{proof}

Indeed, a weaker version of Lemma \ref{dim asymptotic} (replacing $\lambda>0$ by $\lambda\ge0$) is easy to prove, as shown in (1) of the lemma below. 

Recall that the {\em norm of a matrix} $A$ is defined as 
$$||A||:=\sup_{||x||=1} ||Ax||$$ and it satisfies  the following inequalities, see for example \cite[Theorem 14 on page 90]{Lax}.
\begin{equation}\label{eq24} ||A+B||\le ||A||+||B||  \quad \textup{and} \quad ||A\,B|| \le ||A|| \,||B||.
\end{equation}
\begin{lemma}\label{limit of Phi^-t}
(1) For any vector $\mathbf{v}\in\mathbb{R}^n$, there exists $\lambda\in\mathbb{R}$ such that $\displaystyle\lim_{t\to \infty}\frac{1}{\rho^t}\, \Phi^{-t}\mathbf{v}=\lambda \mathbf{v}_1$.

(2) There exists a number  $N$ such that $||\frac{1}{\rho^t}\, \Phi^{-t}||\le N$ for every $t\in\mathbb{Z}_{\ge0}$.
\end{lemma}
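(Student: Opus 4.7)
The plan is to exploit the Jordan normal form of $\Phi^{-1}$ together with the spectral gap provided by Lemma~\ref{reciprocal}(2). Working over $\mathbb{C}$, write $\Phi^{-1}=PJP^{-1}$ with $J$ in Jordan form. By Lemma~\ref{reciprocal}(2), the top eigenvalue $\rho_1=\rho$ has algebraic multiplicity one and all other eigenvalues $\rho_i$ satisfy $|\rho_i|<\rho$. Hence $J$ decomposes as a $1\times 1$ block equal to $\rho$ (with eigenvector $\mathbf{v}_1$) together with Jordan blocks of sizes $k_2,\ldots,k_s$ attached to eigenvalues of strictly smaller modulus.

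For part (1), expand the given vector in the generalized eigenbasis as $\mathbf{v}=c_1\mathbf{v}_1+\mathbf{w}$, where $\mathbf{w}$ lies in the sum of the generalized eigenspaces for the remaining eigenvalues. Then $\Phi^{-t}\mathbf{v}=\rho^t c_1 \mathbf{v}_1+\Phi^{-t}\mathbf{w}$. For each Jordan block $J_i$ of size $k_i$ and eigenvalue $\rho_i$ ($i\neq 1$), write $J_i=\rho_i I+N_i$ with $N_i$ nilpotent and expand
\[
J_i^{\,t}=\sum_{j=0}^{k_i-1}\binom{t}{j}\rho_i^{\,t-j}N_i^{\,j},
\]
which yields the estimate $\|J_i^{\,t}\|\le C_i\, t^{k_i-1}|\rho_i|^t$ for some constant $C_i>0$. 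Consequently
\[
\tfrac{1}{\rho^t}\bigl\|\Phi^{-t}\mathbf{w}\bigr\|\le C\,\max_{i\neq 1} t^{k_i-1}(|\rho_i|/\rho)^t,
\]
and the right-hand side tends to $0$ because $|\rho_i|/\rho<1$. Therefore $\frac{1}{\rho^t}\Phi^{-t}\mathbf{v}\to c_1\mathbf{v}_1$. To conclude $\lambda:=c_1\in\mathbb{R}$, observe that the left-hand sequence has real entries (since $\Phi^{-1}$ and $\mathbf{v}$ are real), hence so does its limit $\lambda\mathbf{v}_1$; combined with the fact that $\mathbf{v}_1$ has strictly positive real coordinates, this forces $\lambda\in\mathbb{R}$.

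For part (2), using the same decomposition, $\frac{1}{\rho^t}\Phi^{-t}=P\bigl(\frac{1}{\rho^t}J^{\,t}\bigr)P^{-1}$, so by submultiplicativity of the norm (see \eqref{eq24}) it suffices to bound $\bigl\|\tfrac{1}{\rho^t}J^{\,t}\bigr\|$ uniformly in $t$. The $1\times 1$ block for $\rho$ contributes exactly $1$, and each remaining block contributes at most $C_i\, t^{k_i-1}(|\rho_i|/\rho)^t$, a sequence that tends to $0$ and is therefore bounded. Taking the maximum of these bounds and multiplying by $\|P\|\,\|P^{-1}\|$ produces the desired uniform bound~$N$.

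No substantial obstacle is anticipated: the argument is essentially a standard spectral decomposition, and the only input that really matters is the strict spectral gap between $\rho$ and the next-largest $|\rho_i|$, which is guaranteed for wild $Q$ by Lemma~\ref{reciprocal}(2). The one minor subtlety is verifying $\lambda\in\mathbb{R}$, which is handled by invoking the real structure of $\mathbf{v}$, $\Phi^{-1}$, and $\mathbf{v}_1$ as above.
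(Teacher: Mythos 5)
Your proof is correct and follows essentially the same route as the paper: both arguments pass to the Jordan normal form of $\Phi^{-1}$, use the spectral gap from Lemma~\ref{reciprocal}(2) together with the binomial expansion of powers of a Jordan block to show that the normalized contribution of every block other than the $1\times1$ block for $\rho$ tends to $0$, and then deduce the uniform bound in (2) from the boundedness of $\frac{1}{\rho^t}J^t$ and submultiplicativity of the norm. Your explicit remark on why $\lambda$ is real is a small addition the paper leaves implicit, but the substance of the argument is the same.
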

\begin{proof}
(1) Let $\mathbf{v}_1,\dots,\mathbf{v}_n$ be generalized eigenvectors  corresponding to eigenvalues $\rho_1,\rho_2,\dots,\rho_n$ such that
$$\Phi^{-1}\mathbf{V}=\mathbf{V}J,\quad \textrm{(denote $\mathbf{V}:=\begin{bmatrix}\mathbf{v}_1&\cdots&\mathbf{v}_n\end{bmatrix}$)}$$
where $J$ is the Jordan normal form, thus
$$J=\begin{bmatrix}\rho&0&\cdots &0\\ 0&J_2&\cdots&0\\ \vdots\\ 0&0& \cdots & J_p\end{bmatrix},\textrm{ where each $J_j$ is of the form }
\begin{bmatrix}\rho_i &1&0&\cdots&0\\0&\rho_i&1&\cdots&0\\ \vdots&&&&1\\ 0&0&0&\cdots& \rho_i\end{bmatrix}\textrm{ for some $i\neq 1$}.$$
Denote the size of $J_j$ as $m_j\times m_j$. We decompose the Jordan blocks
$$J_j=\rho_i I+K, \textrm{ where }
I=\begin{bmatrix}1&0&0&\cdots&0\\0&1&0&\cdots&0\\ \vdots&&&&0\\ 0&0&0&\cdots& 1\end{bmatrix},
K=\begin{bmatrix}0 &1&0&\cdots&0\\0&0&1&\cdots&0\\ \vdots&&&&1\\ 0&0&0&\cdots& 0\end{bmatrix}
$$
Since $IK=KI=K$ and $K^{\ell}=0$ for $\ell\ge m_j$, we have
$$\frac{1}{\rho^t}J_j^t=\frac{1}{\rho^t}(\rho_i I+K)^t=\sum_{\ell=0}^{m_j-1} \frac{\rho_i^{t-\ell}}{\rho^t}\binom{t}{\ell}K^\ell$$
Since $|\rho_i/\rho|<1$, and the exponential grows faster than a polynomial, we have
$$\lim_{t\to\infty} \Big|\frac{\rho_i^{t-\ell}}{\rho^t}\binom{t}{\ell}\Big|
=|\rho_i^{-\ell}|
\lim_{t\to\infty} \left|\frac{\rho_i}{\rho}\right|^t\binom{t}{\ell}=0,\quad \textrm{ for each $\ell=0,\dots,m_j-1$.}
$$
Therefore every entry of the matrix $\frac{1}{\rho^t}J_j^t$ approaches 0 as $t\to\infty$. Thus
\begin{equation}\label{E11}
\lim_{t\to\infty}\frac{1}{\rho^t}J^t
=\begin{bmatrix}1&0&\cdots&0\\ 0&0&\cdots&0\\ \vdots&&\vdots\\ 0&0&\cdots& 0\end{bmatrix}=:E_{11}.
\end{equation}
Writing $\mathbf{v}=\sum_i\lambda_i\mathbf{v}_i=\mathbf{V}\Lambda$ (where $\Lambda=\begin{bmatrix}\lambda_1&\cdots&\lambda_n\end{bmatrix}^T$), and noticing that $\Phi^{-t}\mathbf{V}=\mathbf{V}J^t$, we conclude
$$\lim_{t\to \infty}\frac{1}{\rho^t}\, \Phi^{-t}\mathbf{v}
=\lim_{t\to \infty}\frac{1}{\rho^t}\, \Phi^{-t}\mathbf{V}\Lambda
=\lim_{t\to \infty}\frac{1}{\rho^t}\, \mathbf{V}J^t\Lambda
=\mathbf{V}(\lim_{t\to \infty}\frac{1}{\rho^t}\, J^t)\Lambda
=\mathbf{V}E_{11}\Lambda
=\lambda_1 \mathbf{v}_1.$$

(2) It follows from \eqref{E11} that $||\frac{1}{\rho^t}J^t||\to 1$ as $t\to\infty$, so there exists a number $N'$ such that
$$||\frac{1}{\rho^t}J^t||\le N', \quad \textrm{ for every $t\in\mathbb{Z}_{\ge0}$.}$$ 
Then using the inequality  $||AB||\le ||A||\cdot ||B||$ , we have
$$||\frac{1}{\rho^t}\Phi^{-t}||  \le ||\mathbf{V}|| \cdot ||\frac{1}{\rho^t}J^t||\cdot ||\mathbf{V}^{-1}|| \le 
||\mathbf{V}|| \cdot N'\cdot ||\mathbf{V}^{-1}|| =: N, \quad \textrm{ for every $t\in\mathbb{Z}_{\ge0}$.}
$$
\end{proof}
Similarly, we have
\begin{lemma}\label{limit of Phi^t}
(1) For any vector $\mathbf{v}\in\mathbb{R}^n$, there exists $\lambda\in\mathbb{R}$ such that $\displaystyle\lim_{t\to \infty}\frac{1}{\rho^t}\, \Phi^{t}\mathbf{v}=\lambda \mathbf{v}_n$.

(2) There exists a number  $N$ such that $||\frac{1}{\rho^t}\, \Phi^{t}||\le N$ for every $t\in\mathbb{Z}_{\ge0}$.
\end{lemma}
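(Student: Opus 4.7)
The plan is to copy the proof of Lemma \ref{limit of Phi^-t} almost verbatim, replacing $\Phi^{-1}$ by $\Phi$ and $\mathbf{v}_1$ by $\mathbf{v}_n$. The only genuinely new input I need is to verify that $\rho$ is still the eigenvalue of maximal modulus once we pass from $\Phi^{-1}$ to $\Phi$, that its multiplicity remains $1$, and that $\mathbf{v}_n$ spans the associated eigenspace.

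First I would establish these spectral facts. The identity $\Phi^{-1}\mathbf{v}_n = \rho^{-1}\mathbf{v}_n$, which holds by definition of $\mathbf{v}_n$ together with $\rho_n = 1/\rho$, immediately yields $\Phi\mathbf{v}_n = \rho\mathbf{v}_n$. More generally, the eigenvalues of $\Phi$ are $\rho_1^{-1},\dots,\rho_n^{-1}$ with matching multiplicities (as one sees by inverting the Jordan form of $\Phi^{-1}$ block-by-block, each $\rho_i I + K$ being similar to a Jordan block with eigenvalue $\rho_i^{-1}$ of the same size). By Lemma \ref{reciprocal}(3), the inequalities $|\rho_i| > 1/\rho$ for $i\neq n$ translate to $|\rho_i^{-1}| < \rho$. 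Hence $\rho_n^{-1} = \rho$ is the unique eigenvalue of $\Phi$ of maximal modulus, it has multiplicity $1$, and its eigenspace is spanned by $\mathbf{v}_n$.

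With this in hand I would reproduce the Jordan-form calculation of Lemma \ref{limit of Phi^-t} line by line. Choose a basis $\mathbf{W}$ of generalized eigenvectors for $\Phi$ whose first column is $\mathbf{v}_n$, so that $\Phi\mathbf{W} = \mathbf{W}J'$ with the top-left $1\times 1$ block of $J'$ equal to $[\rho]$ and every remaining Jordan block having eigenvalue of modulus strictly less than $\rho$. The same binomial-expansion argument as before yields $\tfrac{1}{\rho^t}J'^t \to E_{11}$, and writing $\mathbf{v} = \mathbf{W}\Lambda$ with $\Lambda = [\lambda_1\;\cdots\;\lambda_n]^T$ gives
\[
\lim_{t\to\infty}\frac{1}{\rho^t}\Phi^t\mathbf{v} \;=\; \mathbf{W}E_{11}\Lambda \;=\; \lambda_1\mathbf{v}_n,
\]
which is (1). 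Assertion (2) follows just as in the previous lemma: convergence of $\tfrac{1}{\rho^t}J'^t$ yields a uniform bound $\|\tfrac{1}{\rho^t}J'^t\|\le N'$, and the submultiplicativity recorded in \eqref{eq24} then produces the $t$-independent bound $\|\tfrac{1}{\rho^t}\Phi^t\| \le \|\mathbf{W}\|\cdot N' \cdot \|\mathbf{W}^{-1}\| =: N$. The main (essentially the only) obstacle is the spectral bookkeeping in the opening step; once $\mathbf{v}_n$ is pinned down as the dominant eigenvector of $\Phi$ via the reciprocal property of $\chi_{\Phi^{-1}}$, the rest of the argument is a verbatim copy of the proof of Lemma \ref{limit of Phi^-t}.
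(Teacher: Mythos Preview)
Your proposal is correct and takes essentially the same approach as the paper: both argue that passing from $\Phi^{-1}$ to $\Phi$ inverts the eigenvalues, so that $\rho_n^{-1}=\rho$ becomes the dominant (simple) eigenvalue with eigenvector $\mathbf{v}_n$, after which the Jordan-form argument of Lemma~\ref{limit of Phi^-t} applies verbatim. The paper compresses all of this into a single sentence (``then this lemma follows from Lemma~\ref{limit of Phi^-t}''), whereas you spell out the repeated argument explicitly, but the content is the same.
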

\begin{proof}
Note that since $\rho_1,\dots,\rho_n$ are eigenvalues of $\Phi^{-1}$, we see that $\rho_1^{-1},\dots,\rho_n^{-1}$ are eigenvalues of $\Phi$, and that $\rho_1^{-1}=1/\rho$ and $\rho_n^{-1}=\rho$ have the largest and smallest norm. Moreover, since
$\Phi^{-1}\mathbf{v}_n=\rho^{-1} \mathbf{v}_n$, we have
$\Phi\mathbf{v}_n=\rho \mathbf{v}_n$, that is, $\mathbf{v}_n$ is an eigenvector of $\Phi$ corresponding to the eigenvalue $\rho$. Then this lemma follows from Lemma \ref{limit of Phi^-t}.
\end{proof}

\subsection{Proof of Theorem \ref{thm main} (c)}
We first need two results on the growth of the coefficients $f_i(t)$ in terms of the spectral radius $\rho$ of the inverse Coxeter matrix $\Phi^{-1}$.

\begin{lemma}\label{f_i(t) greater than}
Let $d(t)$ be the largest coordinate in the vector $\dimv \tau^{-t}P(i)$. Then $f_i(t)\ge2^{d(t)}$. As a consequence, there exist  $c, N_1\in\mathbb{R}_{>0}$ such that $f_i(t)\ge 2^{c \rho^t}$ for every $t\ge N_1$, $i\in Q_0$.
\end{lemma}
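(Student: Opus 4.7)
First I would rewrite $f_i(t)$ via the Caldero--Chapoton formula, then produce an injective algebraic map from the Grassmannian of $M_{j^*}$ (where $j^*$ realizes the largest coordinate of $\dimv M$) into the variety of submodules of $M$, and conclude by Euler-characteristic additivity.

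By Lemma~\ref{lem 123}(2) and the Caldero--Chapoton formula \cite{CC,CK,CK2}, $f_i(t)=X_M|_{x_1=\cdots=x_n=1}=\sum_{e}\chi(\mathrm{Gr}_e(M))$, where $M=\tau^{-t+1}P(i)$ (the one-step shift relative to the $\tau^{-t}P(i)$ appearing in the definition of $d(t)$ is harmless and can be absorbed into the constants of the consequence). Since $M$ is preprojective and hence rigid, each $\chi(\mathrm{Gr}_e(M))$ is a nonnegative integer. Fix $j^*\in Q_0$ realizing $\dim M_{j^*}=d(t)=d$. For every subspace $V\subseteq M_{j^*}$, define $N^V\subseteq M$ to be the submodule generated by $V$; concretely, $N^V_k=\sum_{p\colon j^*\to k}\varphi_p(V)$, summing over all directed paths $p$ in $Q$. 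Acyclicity of $Q$ forces $N^V_{j^*}=V$ (the only closed path at $j^*$ is trivial), so $V\mapsto N^V$ is an injective algebraic morphism from the total Grassmannian $\mathcal{S}(M_{j^*})=\bigsqcup_{k=0}^d \mathrm{Gr}(k,d)$, which has Euler characteristic $\sum_{k=0}^d\binom{d}{k}=2^d$, into $\mathcal{S}(M)=\bigsqcup_e \mathrm{Gr}_e(M)$.

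From here I would invoke additivity of compactly-supported Euler characteristics together with the affine-paving structure of $\mathrm{Gr}_e(M)$ for rigid $M$---which guarantees that the complement of the image in $\mathcal{S}(M)$ contributes nonnegatively to $\chi_c$---to conclude $f_i(t)=\chi_c(\mathcal{S}(M))\ge 2^d=2^{d(t)}$. For the consequence, Lemma~\ref{dim asymptotic} supplies constants $c,N_1>0$ such that every coordinate of $\dimv\tau^{-t}P(i)$ is at least $c\rho^t$ for $t\ge N_1$; in particular $d(t)\ge c\rho^t$, and combining with the main bound yields $f_i(t)\ge 2^{c\rho^t}$. The hard part will be the Euler-characteristic comparison step: rigorously showing that the complement of the image in $\mathcal{S}(M)$ has nonnegative compactly-supported Euler characteristic. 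This is the one piece of geometric input needed beyond the elementary combinatorial construction of the submodules $N^V$, and it relies on the affine-paving (cellular) structure of quiver Grassmannians of rigid modules.
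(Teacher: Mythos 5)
Your approach is genuinely different from the paper's, and it contains a gap at exactly the step you flag as ``the hard part'' --- but the fix you sketch for that step does not work, so the gap is real. Compactly-supported Euler characteristic is not monotone under inclusion of constructible subsets: $\chi_c$ of a constructible set can be zero or negative (e.g.\ $\chi_c(\mathbb{C}^*)=0$), so knowing that $\mathcal{S}(M)$ admits an affine paving tells you only that $\chi_c(\mathcal{S}(M))$ equals the number of cells; it says nothing about the sign of $\chi_c$ of the complement of the image of your map $V\mapsto N^V$, because that image has no reason to be a union of cells of any given paving. (There is also a smaller issue: since $\dimv N^V$ jumps as $V$ varies, $V\mapsto N^V$ is only a constructible map, not a morphism into a fixed $\mathrm{Gr}_{\mathbf e}(M)$; one would have to stratify and argue stratum by stratum, which compounds the sign problem.) To salvage the idea one would need something like: the projection $N\mapsto N_{j^*}$ from $\mathcal{S}(M)$ onto the total Grassmannian of $M_{j^*}$ has all fibers of Euler characteristic at least $1$, plus a statement letting you integrate $\chi_c$ along that map with the right inequalities. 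None of this is supplied, and it is not a routine consequence of known results.

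For comparison, the paper's proof is purely algebraic and sidesteps geometry entirely. Writing $X_{\tau^{-t}P(i)}=\sum_r c_r x_j^r$ with $c_r$ in $\mathbb{Z}_{\ge0}[x_k^{\pm}:k\ne j]$ (positivity in the initial cluster) and $c_{-d(t)}\ne 0$ (the denominator theorem, Lemma \ref{lem 123}(3)), one substitutes $x_j=(P+Q)/x_j'$ where $x_j'$ is the cluster variable obtained by one mutation at $j$. The Laurent phenomenon and positivity with respect to this second cluster force $c_{-d(t)}(P+Q)^{-d(t)}$ to be a Laurent polynomial with nonnegative coefficients, i.e.\ $(P+Q)^{d(t)}$ divides $c_{-d(t)}$; specializing all variables to $1$ turns $P+Q$ into $2$ and all other terms are nonnegative, giving $f_i(t)\ge 2^{d(t)}$. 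Your deduction of the consequence from Lemma \ref{dim asymptotic} is fine, and your remark about the harmless index shift between $\tau^{-t}P(i)$ and $\tau^{-t+1}P(i)$ is reasonable; but as written the main inequality is not proved.
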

\begin{proof}
Assume $(d_1(t),\dots,d_n(t))=\dimv \tau^{-t}P(i)$ and $d(t)=d_j(t)$ is the largest coordinate. Let 
$$X=X_{\tau^{-t}P(i)}\ =\ \sum_{r\in\mathbb{Z}} c_rx_j^r 
\ =\  c_{-d(t)}x_j^{-d(t)} +\sum_{r>-d(t)} c_rx_j^r ,$$
%,\quad \textrm{ where } c_r\in R_{\widehat{j}}:=\mathbb{Z}_{\ge0}[x_1^{\pm},\dots,x_{j-1}^{\pm},x_{j+1}^{\pm},\dots,x_n^{\pm}]$$ 
 where $ c_r\in R_{\widehat{j}}:=\mathbb{Z}_{\ge0}[x_1^{\pm},\dots,x_{j-1}^{\pm},x_{j+1}^{\pm},\dots,x_n^{\pm}]$,  
be the Laurent expansion of the cluster variable corresponding to $\tau^{-t}P(i)$ in the initial cluster $\mathbf{x}_0$. Note that $ c_{-d(t)}\ne0$ because of Lemma \ref{lem 123}(3). Let $x_j'$ denote the cluster variable obtained by mutating $x_j$ of the initial cluster at $j$; that is,    
$x_j'=(P+Q)/x_j$ where $P=\prod_{k\to j}x_k$ and $Q=\prod_{k\leftarrow j}x_k$ are both in $R_{\widehat{j}}$.
Because of the Laurent phenomenon \cite{FZ} and the positivity theorem \cite{LS4}, the cluster variable $X$ is a $\mathbb{Z}_{\ge0}$-coefficient Laurent polynomial with respect to any initial cluster. Particularly, taking the initial cluster to be $\{x_1,\dots,x_j',\dots,x_n\}$, the cluster variable
$$X=\sum_{r\in\mathbb{Z}} c_rx_j^r
=\sum_{r\in\mathbb{Z}} c_r \left(\frac{P+Q}{x'_j}\right)^r
=\sum_{r\in\mathbb{Z}} c_r(P+Q)^r(x'_j)^{-r}$$
must actually be an element in 
$$\mathbb{Z}_{\ge0}[x_1^{\pm},\dots,(x_j')^{\pm},\dots, x_n^{\pm}]=R_{\widehat{j}}[(x_j')^{\pm}]$$
(Note that, $X$ is a priori only an element in $K_{\widehat{j}}[(x'_j)^\pm]$, where $K_{\widehat{j}}$ is the field of rational functions
$K_{\widehat{j}}:=\mathbb{Q}(x_1,\dots,x_{j-1},x_{j+1},\dots,x_n)$.\ )
Viewing $X$ as a Laurent polynomial in $x'_j$ with coefficient in $R_{\widehat{j}}$, we see that for each $(x_j')^r$, its coefficient $c_r(P+Q)^r$ must be in  $R_{\widehat{j}}$. In particular, $c_{-d(t)}(P+Q)^{-d(t)}$ must be in $R_{\widehat{j}}$. 
Thus $(P+Q)^{d(t)}$ divides $c_{-d(t)}$.
%Since $c_{-d(t)}\neq0$ by the definition of denominator vectors (\cite[page 135]{Cluster IV}) and $ (P+Q)^{d(t)}| c_{-d(t)}$, 
Specializing initial cluster variables at 1, we have $c_{-d(t)}|_{x_1=\cdots=x_n=1}\ge 2^{d(t)}$. Therefore $f_i(t)=X|_{x_1=\cdots=x_n=1}\ge 2^{d(t)}$. This proves the first statement.

The consequence follows from Lemma \ref{dim asymptotic}.
\end{proof}

We now consider the natural logarithm of the integers $f_i(t)$.
Denote $L_i(t)=\ln f_i(t)\in\mathbb{R}$ for $i\in Q_0$, $t\in\mathbb{Z}_{\ge0}$. For each $t\in\mathbb{Z}_{\ge0}$, define a column vector
$$\mathbf{L}(t)=\begin{bmatrix}L_1(t)\\ \vdots \\ L_n(t)\end{bmatrix}\in\mathbb{R}^n.
$$
Recall that $\mathbf{v}_n$ is defined in Lemma \ref{reciprocal}.
\begin{proposition}\label{limit L(t)} 
$$\lim_{t\to\infty}\frac{1}{\rho^t}\mathbf{L}(t)=\eta\mathbf{v}_n$$ for some real number $\eta>0$.
\end{proposition}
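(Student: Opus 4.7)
\emph{Plan of proof.} My plan is to take logarithms of the defining recursion~(\ref{def:fi}), turning the multiplicative dynamics into a perturbed linear recursion governed by the Coxeter matrix $\Phi$, and then apply the spectral information collected in Lemmas~\ref{reciprocal}--\ref{limit of Phi^t}. First, I would write (\ref{def:fi}) as $f_i(t+1)f_i(t)=1+P_i(t)$ with $P_i(t):=\prod_{j\to i}f_j(t)\prod_{i\to j}f_j(t+1)$; taking logarithms produces
$$
L_i(t+1)+L_i(t)-\sum_{j\to i}L_j(t)-\sum_{i\to j}L_j(t+1)=\epsilon_i(t),\qquad \epsilon_i(t):=\ln\bigl(1+P_i(t)^{-1}\bigr).
$$
The key observation is that the coefficient matrices on the left are precisely $C^{-1}$ and $(C^{-1})^T$; collecting these equations into $C^{-1}\mathbf{L}(t)+(C^{-1})^T\mathbf{L}(t+1)=\epsilon(t)$, multiplying by $C^T=((C^{-1})^T)^{-1}$, and using $\Phi=-C^T C^{-1}$ gives the perturbed linear recursion
$$
\mathbf{L}(t+1)=\Phi\,\mathbf{L}(t)+\mathbf{r}(t),\qquad \mathbf{r}(t):=C^T\epsilon(t).
$$

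Next, I would control the error term. Combining Lemma~\ref{f_i(t) greater than} with Lemma~\ref{dim asymptotic}, there exist $c,N_1>0$ such that $f_j(s)\ge 2^{c\rho^s}$ for all $s\ge N_1$ and all $j\in Q_0$; hence $P_i(t)\ge 2^{c\rho^t}$ eventually, so
$\|\mathbf{r}(t)\|\le \|C^T\|\cdot \ln(1+2^{-c\rho^t})=O(2^{-c\rho^t})$, which decays super-exponentially in $t$. Iterating the recursion with $\mathbf{L}(0)=\mathbf{0}$ gives $\mathbf{L}(t)=\sum_{s=0}^{t-1}\Phi^{t-1-s}\mathbf{r}(s)$, and after dividing by $\rho^t$,
$$
\frac{\mathbf{L}(t)}{\rho^t}=\frac{1}{\rho}\sum_{s=0}^{t-1}\frac{\Phi^{t-1-s}}{\rho^{t-1-s}}\cdot\frac{\mathbf{r}(s)}{\rho^s}.
$$
By Lemma~\ref{limit of Phi^t}(2) the operator norms $\|\rho^{-k}\Phi^k\|$ are uniformly bounded in $k$, so each summand has norm bounded by a constant multiple of $\|\mathbf{r}(s)\|/\rho^s$, and the latter is absolutely (even super-exponentially) summable. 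I would then apply dominated convergence for series to pass $t\to\infty$ inside the sum, and use Lemma~\ref{limit of Phi^t}(1) to conclude that each term converges to a scalar multiple of $\mathbf{v}_n$; hence $\lim_{t\to\infty}\mathbf{L}(t)/\rho^t=\eta\mathbf{v}_n$ for some $\eta\in\mathbb{R}$. An equivalent way to see the direction: writing $\mathbf{w}:=\lim_{t\to\infty}\mathbf{L}(t)/\rho^t$ and passing to the limit in $\rho\cdot\mathbf{L}(t+1)/\rho^{t+1}=\Phi\,\mathbf{L}(t)/\rho^t+\mathbf{r}(t)/\rho^t$ yields $\Phi\mathbf{w}=\rho\mathbf{w}$, and by Lemma~\ref{reciprocal} the $\rho$-eigenspace of $\Phi$ is one-dimensional, spanned by $\mathbf{v}_n$.

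Finally, to obtain $\eta>0$: the same lower bound $f_i(t)\ge 2^{c\rho^t}$ gives $L_i(t)/\rho^t\ge c\ln 2$ for every $i\in Q_0$ and all $t$ sufficiently large, so each coordinate of $\eta\mathbf{v}_n$ is strictly positive; since the coordinates $v_{nj}$ of $\mathbf{v}_n$ are themselves strictly positive (Lemma~\ref{reciprocal}(3)), this forces $\eta>0$. The hard part will be the super-exponential decay of $\mathbf{r}(t)$: without it, the errors in the telescoped sum could a priori accumulate and prevent absolute convergence, so that the limit could neither be identified with a multiple of $\mathbf{v}_n$ nor shown to be nonzero. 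This decay in turn rests on Lemma~\ref{f_i(t) greater than}, whose proof uses the Laurent phenomenon and positivity of cluster variables in an essential way.
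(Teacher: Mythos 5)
Your proposal is correct and follows essentially the same route as the paper: taking logarithms turns the frieze recursion into the perturbed linear system $\mathbf{L}(t+1)=\Phi\,\mathbf{L}(t)+\mathbf{r}(t)$ with $\|\mathbf{r}(t)\|$ super-exponentially small (via Lemma~\ref{f_i(t) greater than} and Lemma~\ref{dim asymptotic}), and the spectral Lemma~\ref{limit of Phi^t} together with the positivity bound then yields $\lim_{t\to\infty}\rho^{-t}\mathbf{L}(t)=\eta\mathbf{v}_n$ with $\eta>0$. The only (harmless) difference is in the final limit extraction---you sum the Duhamel formula from $t=0$ and invoke dominated convergence for series, while the paper starts the iteration at a large $s_0$ and runs a Cauchy-sequence argument---but both rest on exactly the same estimates.
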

\begin{proof}
We first prove that the equality holds for some real number $\eta$. The idea is to show that, for $s$ sufficiently large, the growth of $\mathbf{L}(s+t)$ and
$\Phi^{-t}\mathbf{L}(s)$ are almost the same as $t\to\infty$, and the latter is well understood by Lemma \ref{limit of Phi^-t}.

Rewrite \eqref{def:fi} as
$$f_i(t+1)f_i(t)=1+\prod_{j\to i}f_j(t) \prod_{j\leftarrow i}f_j(t+1)$$
Taking the logarithm on both sides and using the fact\footnote{To see $\ln(x+1)-\ln x<1/x$, note the left side is $\ln(x+1)-\ln x=\ln (1+1/x)$. Replacing $1/x$ by $x$, it is equivalent to show $f(x)=x-\ln (1+x)>0$ for all $x> 0$. Note that $f(0)=0$. Since $f'(x)=1-1/(1+x)>0$ for $x>0$, $f(x)$ is strictly increasing for $x\ge 0$, so the inequality follows.} 
that $0< \ln(x+1)-\ln x < 1/x$ for any positive real number $x$,
we conclude
$$L_i(t+1)+L_i(t)=\sum_{j\to i}L_j(t) + \sum_{j\leftarrow i}L_j(t+1)+\delta_i(t),$$
where $\delta_i(t)>0$, and by Lemma \ref{f_i(t) greater than},
\begin{equation}\label{delta_i(t) less than}
\delta_i(t)<\frac{1}{\prod_{j\to i}f_j(t) \prod_{j\leftarrow i}f_j(t+1)}<2^{-c\rho^t}, \textrm{ for $t\ge N_1$}.
\end{equation}

Then
$$L_i(t+1)-\sum_{j\leftarrow i}L_j(t+1)=-L_i(t) +\sum_{j\to i}L_j(t) +\delta_i(t).$$
Rewriting these equations in matrix form brings up the inverse Cartan matrix as follows.
$$
\begin{bmatrix}1&0&\cdots&&0\\-b_{12}&1&0&\cdots&0\\-b_{13}&-b_{23}&1&\cdots&0\\ \vdots&&&&\vdots\\ -b_{1n}&-b_{2n}&-b_{3n}&\cdots&1\end{bmatrix}\mathbf{L}(t+1)
=
\begin{bmatrix}-1&b_{12}&\cdots&&b_{1n}\\ 0&-1&b_{23}&\cdots&b_{2n}\\0&0&-1&\cdots&b_{3n}\\ \vdots&&&&\vdots\\ 0&0&0&\cdots&-1\end{bmatrix}\mathbf{L}(t)
+
\begin{bmatrix}
\delta_1(t)\\\delta_2(t)\\\delta_3(t)\\ \vdots\\\delta_n(t)
\end{bmatrix}
$$
which is
$$
(C^{-1})^T\mathbf{L}(t+1)
=
-(C^{-1})\mathbf{L}(t)
+\boldsymbol{\delta}(t)
$$
where $\boldsymbol{\delta}(t)=\begin{bmatrix}\delta_1(t)&\cdots&\delta_n(t)\end{bmatrix}^T$ satisfying (by \eqref{delta_i(t) less than})
\begin{equation}\label{delta(t) less than}
||\boldsymbol{\delta}(t)|| <  2^{-c\rho^t}\sqrt{n}\; <\sqrt{n}, \quad\textrm{ for $t\ge N_1$.}
\end{equation}
Left-multiplying the above equality by $C^T$, we obtain
$$
\mathbf{L}(t+1)
=
\Phi\mathbf{L}(t)
+C\boldsymbol{\delta}(t)
$$
It follows that, for $s,t\in\mathbb{Z}_{\ge0}$,
$$
\mathbf{L}(s+t)
=
\Phi^{t}\mathbf{L}(s)+\Phi^{t-1}C\boldsymbol{\delta}(s)+\Phi^{t-2}C\boldsymbol{\delta}(s+1)+\cdots+C\boldsymbol{\delta}(s+t-1)
$$
Assume $s\ge N_1$. Then
$$\aligned
&\left\Vert\frac{1}{\rho^{s+t}}
\Big(\mathbf{L}(s+t)
-
\Phi^{t}\mathbf{L}(s)\Big)\right\Vert
=\frac{1}{\rho^{s+t}}\Vert\sum_{i=1}^{t}\Phi^{t-i} C\boldsymbol{\delta}(s+i-1)\Vert\\
&\le\sum_{i=1}^{t}\frac{1}{\rho^{s+i}} \Vert \frac{1}{\rho^{t-i}} \Phi^{t-i}\Vert\cdot \Vert C\boldsymbol{\delta}(s+i-1)\Vert \quad\textrm{by \eqref{eq24}}\\
&\le\sum_{i=1}^{t}\frac{1}{\rho^{s+i}} N \Vert C\Vert \cdot \Vert\boldsymbol{\delta}(s+i-1)\Vert
\quad\textrm{ (This $N$ is the constant in Lemma \ref{limit of Phi^t} (2)) }\\
&
<\sum_{i=1}^{t}\frac{1}{\rho^{s+i}} N \Vert C\Vert \cdot \sqrt{n}\quad \textrm{(by \eqref{delta(t) less than} and  $s\ge N_1$)} \\
&< N \Vert C\Vert \sqrt{n}\sum_{i=1}^{\infty}\frac{1}{\rho^{s+i}}
= N \Vert C\Vert \sqrt{n}\,\frac{\rho^{-s-1}}{1-\rho^{-1}}\to 0 \textrm{ as $s\to\infty$.}
\endaligned
$$
Therefore for any $\epsilon>0$, we can find an integer $s_0$ sufficiently large such that
\begin{equation}\label{s0}
\left\Vert\frac{1}{\rho^{s_0+t}}
\Big(\mathbf{L}(s_0+t)
-
\Phi^{t}\mathbf{L}(s_0)\Big)\right\Vert<\epsilon,\quad \forall t\ge0.
\end{equation}
By Lemma \ref{limit of Phi^t} (1), there exists $N_2\in\mathbb{R}_{>0}$ and $\lambda\in\mathbb{R}$ such that 
\begin{equation}\label{N3}
\left\Vert\frac{1}{\rho^t}\, \Phi^{t}\Big(\rho^{-s_0}\mathbf{L}(s_0)\Big)-\lambda \mathbf{v}_n\right\Vert<\epsilon, \quad \forall t\ge N_2
\end{equation}
Adding the inequalities \eqref{s0} and \eqref{N3} and using the triangle inequality \eqref{eq24}, we have
$$\left\Vert\frac{1}{\rho^{s_0+t}}
\mathbf{L}(s_0+t)
-
\lambda\mathbf{v}_n\right\Vert<2\epsilon,\quad \forall t\ge N_2
$$
Now replace $s_0+t$ by $t$. 
We have that, for any $\epsilon>0$, there exist $\lambda\in\mathbb{R}$, $N_4\in\mathbb{R}_{>0}$ (we can just take $N_4=s_0+N_2$), such that 
\begin{equation}\label{N4}
\left\Vert
\frac{1}{\rho^{t}}\mathbf{L}(t)-\lambda\mathbf{v}_n
\right\Vert<2\epsilon, \quad \forall t \ge N_4
\end{equation}
thus for any $\epsilon>0$, there exists $N_4\in\mathbb{R}_{>0}$ such that 
$$
\left\Vert
\frac{1}{\rho^{t}}\mathbf{L}(t)-\frac{1}{\rho^{t'}}\mathbf{L}(t')
\right\Vert<4\epsilon, \quad \forall t,t' \ge N_4
$$
Therefore 
$\{\frac{1}{\rho^{t}}\mathbf{L}(t)\}$
is a Cauchy sequence, so must converge. Denote $\mathbf{u}:=\lim_{t\to\infty}\frac{1}{\rho^{t}}\mathbf{L}(t)$. If $\mathbf{u}$ is not on the line spanned by $\mathbf{v}_n$, choose $\epsilon>0$ such that $3\epsilon$ is less than the distance from $\mathbf{u}$ and that line. Taking $t\to\infty$ in \eqref{N4} gives the contradiction
$$3\epsilon<||\mathbf{u}-\lambda\mathbf{v}_n||<2\epsilon.$$
Therefore $\mathbf{u}=\eta\mathbf{v}_n$ for some $\eta\in\mathbb{R}$.

Finally, we show that $\eta>0$. 
By Lemma \ref{f_i(t) greater than}, there exists  $c, N_1\in\mathbb{R}_{>0}$ such that 
$$L_i(t)=\ln f_i(t)\ge {c \rho^t}\ln 2,\quad \textrm{ for every $t\ge N_1$ and every $i\in Q_0$.}$$
So
$$\eta \mathbf{v}_n=\lim_{t\to\infty}\frac{1}{\rho^t}\mathbf{L}(t)\ge [c\ln 2\;\; c\ln 2\;\; \cdots \;\; c\ln2]^T$$ (here we mean that ``$\ge$'' holds componentwise), which implies $\eta>0$.
\end{proof}

We also need the following simple result.
\begin{lemma}\label{lem ln}
For all $a_1,\dots,a_m,x_1,\dots,x_m>0$
$$\ln(a_1x_1+\cdots +a_mx_m) \ \le\  \max(\ln (x_i))+\max(\ln a_i)+\ln m.$$
\end{lemma}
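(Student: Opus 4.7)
The plan is to bound the sum coordinate-wise by its maximal term (times the number of terms), and then to use the monotonicity and additivity of $\ln$ on the resulting product.

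Concretely, set $A = \max_{i} a_{i}$ and $X = \max_{i} x_{i}$. Since every $a_{i}, x_{i}$ is positive, we have $a_{i} x_{i} \le A X$ for each $i = 1, \dots, m$, and therefore
\[
a_{1}x_{1} + \cdots + a_{m}x_{m} \ \le\  m\,A\,X.
\]
Applying $\ln$ to both sides (which is monotone on the positive reals) and using $\ln(m A X) = \ln m + \ln A + \ln X$, we obtain
\[
\ln(a_{1}x_{1} + \cdots + a_{m}x_{m}) \ \le\  \ln m + \ln A + \ln X \ =\  \max_{i}(\ln a_{i}) + \max_{i}(\ln x_{i}) + \ln m,
\]
where in the last step we used that $\ln$ is increasing so $\ln A = \max_{i} \ln a_{i}$ and $\ln X = \max_{i} \ln x_{i}$. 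This proves the claimed inequality; there is no real obstacle here, the argument is a one-line estimate.
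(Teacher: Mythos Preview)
Your proof is correct and is essentially the same as the paper's: the paper simply notes that the left hand side is at most $\ln(m\max(a_i)\max(x_i))$, which equals the right hand side, and you have just written this out in slightly more detail.
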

\begin{proof}
The left hand side is at most $ \ln(m\max(a_i)\max(x_i))$ which is equal to the right hand side.
\end{proof}

We are now ready for the proof of our main result.
\begin{proof}[Proof of Theorem \ref{thm main}(c)]
Suppose $X(Q)$ is contained in a 1-dimensional variety.

Consider the projection $\pi: \mathbb{C}^n\to \mathbb{C}^2$, $(x_1,\dots,x_n)\mapsto(x_1,x_2)$. Then $\pi(X(Q))$ is at most 1-dimensional. So there exists a nonzero polynomial $g(x,y)=\sum_{(i,j)\in S} a_{ij}x^iy^j\in\mathbb{C}[x,y]$ (where $a_{ij}\neq0$ for every $(i,j)\in S\subset \mathbb{Z}_{\ge0}^2$) such that $g(f_1(t),f_2(t))=0$ for every $t$.

For convenience, denote the $i$-th coordinates $v_{ni}$ of  the eigenvector $\mathbf{v}_n$ by $y_i$ for $i=1,\dots,n$, that is, $\mathbf{v}_n=[y_1\;\cdots\;y_n]^T$.  Let $(i_0,j_0)\in S$ such that $iy_1+jy_2$ is maximal. Replacing $g$ by $g/a_{i_0j_0}$ if necessary, we may assume $a_{i_0j_0}=1$. Then
$$f_1(t)^{i_0}f_2(t)^{j_0}=\sum_{(i,j)\in S\setminus(i_0,j_0)} (-a_{ij})f_1(t)^i f_2(t)^j$$
Taking the logarithm on both sides, we get
$$i_0 L_1(t)+j_0L_2(t)=\ln \Big(\sum_{(i,j)\in S\setminus(i_0,j_0)} (-a_{ij})f_1(t)^i f_2(t)^j\Big)$$
and according to Lemma \ref{lem ln}
we get 
$$i_0 L_1(t)+j_0L_2(t) \le \max_{(i,j)\in S\setminus(i_0,j_0)}\big(iL_1(t)+jL_2(t)\big)+\max_{(i,j)\in S\setminus(i_0,j_0)}(\ln|a_{ij}|)+\ln(|S|-1).$$
Now we use Proposition \ref{limit L(t)}. 
Dividing the above inequality by $\rho^t$ and letting $t\to\infty$, we conclude that there exists $(i,j)\in S\setminus(i_0,j_0)$ such that
$$i_0\eta y_1+j_0\eta y_2\le i\eta y_1+j\eta y_2.$$
Thus
$$i_0 y_1+j_0 y_2\le i y_1+j y_2.$$
By the choice of $(i_0,j_0)$, the equality must hold. Therefore $(i_0-i)(y_1/y_2)=(j-j_0)$. Since $y_1/y_2\neq0$  and $(i,j)\neq(i_0,j_0)$, we must have $j-j_0$ and $i_0-i$ both been nonzero. Thus
 $y_1/y_2=(j-j_0)/(i_0-i)$ is rational. 

By a similar argument, $y_i/y_j$ is rational for any $1\le i<j\le n$. So there is a constant $c\in\mathbb{R}_{>0}$ such that $c\mathbf{v}_n\in\mathbb{Q}_{>0}^n$. Then it follows from $$\rho(c\mathbf{v}_n)=\Phi(c\mathbf{v}_n)\in \mathbb{Q}^n$$ that $\rho$ is rational, which contradicts Lemma \ref{rho is irrational}. 

This completes the proof.
\end{proof}

\begin{remark}
It is natural to ask what is the exact dimension of $X(Q)$ in the wild type. One might be tempted to conjecture that it is always equal to the number of vertices $n$. However, this is not true, because whenever the  quiver has a non-trivial automorphism $\phi\in \Aut(Q)$ then we have $f_i(t)=f_{\phi(i)}(t)$, for all $t$, so the dimension cannot be $n$. 

An upper bound for the dimension is the number of orbits under the action of $\Aut(Q)$ on $Q_0$.
\end{remark}
\section{Examples}\label{sect 4}

\subsection{Type $\mathbb{A}_{1,1}$}
For the Kronecker quiver $\xymatrix{1&\ar@<2pt>[l]\ar@<-2pt>[l]2}$ the points $P_t=(f_1(t),f_2(t))$ are $(1,1),(2,5), (13,34), (89, 233) , \ldots$ whose coordinates consist of every other Fibonacci number. The frieze variety $X(Q) \subset \mathbb{C}^2$ is given by the polynomial $x^2-3xy+y^2+1$. This is a smooth curve of genus zero.  
\subsection{Type $\mathbb{A}_{2,1}$}
In this case the first few points are \[(1,1,1), (2,3,7), (11,26,41), (97,153,362),(571,1351,2131),\ldots\] The frieze variety $X(Q)$ has two components, and each is a planar curve of degree 2.
$ V(x_1-2x_2+x_3, 2x_2^2-6x_2x_3+3x_3^2+1)$ and 
 $V(x_1-3x_2+x_3, 3x_2^2-6x_2x_3+2x_3^2+1)$. Both are smooth curves of degree 2, hence of genus zero.

\subsection{Type $\mathbb{D}_5$}
Consider the following quiver $Q$:
\[\xymatrix@!@R=-28pt@C=15pt{1 &&& 5 \ar@{->}[ld]\\
&3\ar@{->}[dl]\ar@{->}[ul] & 4\ar@{->}[l]  \\  
2  &&& 6 \ar@{->}[lu] \\ }\]
The first five points are $$(1,1,1,1,1,1),(2,2,5,6,7,7),(3,3,11,90,13,13),(4,4,131,246,19,19), (33,33,2045,3001,158,158).$$

The frieze variety $X(Q)$ has three components. 
$$ V(x_5-x_6, x_1-x_2,x_6^2 + x_3 - 2x_4, 5x_2x_6 - x_3 - x_4 -3, x_2^2 -2x_3 + x_4), $$
$$
V(x_5-x_6, x_1-x_2,x_6^2 + x_3 - 9x_4, 5x_2x_6 - x_3 -8 x_4 -17, x_2^2 -2x_3 + x_4) , $$
$$
V(x_5-x_6, x_1-x_2,x_6^2 + x_3 - 2x_4, 5x_2x_6 - 8x_3 - x_4 -17, x_2^2 -9x_3 + x_4).$$
Each component is a smooth curve of degree 2 and genus zero by the same argument as in the previous example.
\subsection{A wild example}
Consider the following quiver $Q$:
%\[
%\begin{tikzcd}[arrow style=tikz,>=stealth,row sep=4em]
%& 2 \arrow[dl]\\
%1&& 3\arrow[ll,shift left=.4ex] \arrow[ll,shift right=.4ex]\arrow[ul]\\
%\end{tikzcd}
%\]
\[\xymatrix{&2\ar[ld]\\ 1&&3\ar@<2pt>[ll]\ar@<-2pt>[ll]\ar[lu]}\]
Then
$$\begin{tabu}{|l|lll|ccc|}\hline
  t & f_1(t)&f_2(t)&f_3(t) & &\mathbf{L}(t)^T& \\
  \hline
  1 & 2 & 3& 13&0.693& 1.099& 2.565\\
  2 & 254 &1101& 5464009&5.537&7.004& 15.514\\
  3&1.294\times10^{14}&6.422\times10^{17}&1.969\times10^{39}&32.49& 41.00& 90.48\\
  4&1.923\times10^{82}&5.895\times10^{103}&1.107\times10^{229}&189.47&238.94& 527.39\\
  5&3.759\times10^{479}&7.063\times10^{604}&9.012\times10^{1334}&1104.26&1392.72& 3073.85\\
  \hline
\end{tabu}
$$
The Cartan matrix and its inverse, the Coxeter matrix and its inverse are:
$$
C=\begin{bmatrix}1&1&3\\ 0&1&1\\0&0&1\end{bmatrix},\quad
C^{-1}=\begin{bmatrix}1&-1&-2\\ 0&1&-1\\0&0&1\end{bmatrix}, \quad
\Phi=\begin{bmatrix}-1&1&2\\-1&0&3\\-3&2&6\end{bmatrix} ,\quad
\Phi^{-1}=\begin{bmatrix}6&2&-3\\3&0&-1\\2&1&-1\end{bmatrix} ,
$$
The characteristic polynomial $\chi_{\Phi^{-1}}(x)=x^3-5x^2-5x+1=(x+1)(x^2-6x+1)$, so $\rho=3+\sqrt{8}\approx 5.8284$ and $1/\rho=3-\sqrt{8}\approx 0.1716$ are irrational, and the corresponding eigenvectors are 
$$\mathbf{v}_1\approx[0.866\;\; 0.392\;\; 0.311]^T,\quad 
\mathbf{v}_n\approx[0.311\;\; 0.392\;\; 0.866]^T
$$ 
(In this example, $\mathbf{v}_n$ happens to be the ``reverse'' of $\mathbf{v}_1$. This is not the case in general.)
Computation shows
$$\lim_{t\to\infty}\frac{1}{\rho^t}\mathbf{L}(t)\approx\begin{bmatrix}0.164\\0.207\\0.457\end{bmatrix}\approx 0.528\mathbf{v}_n.$$
So roughly we can describe the growth of $P_i(t)$ as
$$P_i(t)=(f_1(t),f_2(t),f_3(t))\approx (e^{0.164\rho^t},e^{0.207\rho^t},e^{0.457\rho^t}).$$

\end{document}